\numberwithin{equation}{section}
\numberwithin{figure}{section}
\newlength{\baseunit}
\newtheorem{theorem}{Theorem}
\newtheorem{corollary}{Corollary}
\newtheorem{lemma}{Lemma}
\newtheorem{proposition}{Proposition}
\newtheorem{problem}{Problem}
\DeclareMathOperator{\dist}{dist}
\DeclareMathOperator{\norm}{Norm}
\newcommand{\LA}[1]{\refstepcounter{equation}\text{(\theequation)}\label{#1}}
\newcommand{\LAQ}[2]{\begin{itemize}\item[\LA{#1}]{#2} \end{itemize}}
\numberwithin{equation}{subsection}
\begin{document}
\title{Solutions to A System of Equations for $C^m$ Functions}
\date{\today }
\author{Charles Fefferman, Garving K. Luli}
\thanks{The first author is
supported in part by NSF Grant DMS-1608782, AFOSR Grant FA9550-12-1-0425,
and Grant No 2014055 from the United States-Israel Binational Science
Foundation. The second author is supported by NSF Grant DMS-1554733.}
\maketitle

\section{Introduction}

Here and in \cite{cf-luli-generator}, we study systems of linear equations 
\begin{equation}
\sum_{j=1}^{M}A_{ij}\left( x\right) F_{j}\left( x\right) =f_{i}\left(
x\right) \text{ }\left( i=1,\cdots ,N\right) \text{,}  \label{intro1}
\end{equation}%
for unknown functions $F_{1},\cdots ,F_{N}\in C^{m}\left( \mathbb{R}%
^{n}\right) $ for fixed $m$.\footnote{$C^{m}\left( \mathbb{R}^{n}\right) $
denotes the vector space of $m$-times continuously differentiable functions $%
\mathbb{R}^{n}$, with no growth conditions assumed at infinity.\ Similarly, $%
C^{m}\left( \mathbb{R}^{n},\mathbb{R}^{D}\right) $ denotes the space of all
such $\mathbb{R}^{D}$-valued functions on $\mathbb{R}^{n}$. These notations
remain in force during the introduction, but will be changed later.}

Because $m$ is fixed, we're not allowed to lose derivatives.

The most interesting systems (\ref{intro1}) are underdetermined. An example
due to Epstein and Hochster \cite{Hochster} is the single equation 
\begin{equation}
x^{2}F_{1}+y^{2}F_{2}+xyz^{2}F_{3}=f\left( x,y,z\right)  \label{intro2}
\end{equation}%
for unknown continuous functions $F_{1},F_{2},F_{3}$ on $\mathbb{R}^{3}$.

For a system of the form (\ref{intro1}), we pose three problems.

\begin{problem}
\label{problem1}Suppose the $A_{ij}$ and $f_{i}$ are given functions. For
fixed $m$, how can we decide whether \eqref{intro1} admits a solution $%
F=\left( F_{1},\cdots ,F_{M}\right) \in C^{m}\left( \mathbb{R}^{n},\mathbb{R}%
^{M}\right) $?
\end{problem}

\begin{problem}
\label{problem2}Suppose the $A_{ij}$ are given polynomials. For fixed $m$,
the vectors $f=\left( f_{1},\cdots ,f_{N}\right) $ of polynomials $%
f_{1},\cdots ,f_{N}$ for which \eqref{intro1} admits a $C^{m}$ solution $F$
form a module $\mathcal{M}$ over the ring $\mathcal{R}$ of polynomials on $%
\mathbb{R}^{n}$. Exhibit generators for $\mathcal{M}$.
\end{problem}

\begin{problem}
\label{problem3}Suppose the $A_{ij}$ and $f_{i}$ are polynomials and suppose %
\eqref{intro1} admits a $C^{m}$ solution $F$. Can we take our $C^{m}$
solution $F$ to be semialgebraic?
\end{problem}

For $m=0$, these problems were posed by Brenner \cite{Brenner}, and
Epstein-Hochster \cite{Hochster}, and solved by Fefferman-Koll\'ar \cite%
{Feff-Kollar} and Koll\'ar \cite{kollar}.

In particular, for $m=0$, the answer to Problem \ref{problem3} is
affirmative; $F\in C^{0}$ can be taken to be semialgebraic. An example in
Koll\'ar-Nowak \cite{kn-continuous} shows that it isn't always possible to
take $F_{1},\cdots ,F_{N}$ to be rational functions. See Brenner-Steinbuch \cite{bren}, Koll\'ar \cite{kollar}, Koll\'ar-Nowak \cite{kn-continuous}, and  
Kucharz-Kurdyka \cite{MR3713914} for several related questions and
results.

For $m\geq 1$, Problem \ref{problem1} was solved in Fefferman-Luli \cite%
{fl-jets}, with no restriction on the functions $A_{ij}$, $f_{i}$.

In this paper and \cite{cf-luli-generator}, we solve Problem \ref{problem2}
for $m\geq 1.$

We needn't assume that the given matrix elements $A_{ij}$ are polynomials;
we may take them to be (possibly discontinuous) semialgebraic functions.

So far, Problem \ref{problem3} for $m\geq 1$ is still unsolved.

Observe that Problem \ref{problem2} can't be solved using only analysis,
because it concerns generators for a module over a polynomial ring. On the
other hand, it can't be solved using only algebra, because it concerns $%
C^{m} $ functions. To make a clean splitting into an analysis problem and an
algebra problem, we pose the analogue of Problem \ref{problem2} for vectors $%
f=\left( f_{1},\cdots ,f_{N}\right) $ of $C^{\infty }$ functions.

\begin{problem}
\label{problem2a}Fix a nonnegative integer $m$ and a matrix $\left(
A_{ij}\right) $ of semialgebraic functions on $\mathbb{R}^{n}$. Characterize
all the $f=\left( f_{1},\cdots ,f_{N}\right) \in C^{\infty }\left( \mathbb{R}%
^{n},\mathbb{R}^{N}\right) $ for which \eqref{intro1} admits a $C^{m}$%
-solution.
\end{problem}

To illustrate our result on Problem \ref{problem2a}, consider the
Epstein-Hochster equation (\ref{intro2}), for unknown continuous $%
F_{1},F_{2},F_{3}$. For $f\in C^{\infty }\left( \mathbb{R}^{3}\right) $, one
checks that a continuous solution exists if and only if $f$ satisfies 
\begin{equation}
\left[ 
\begin{array}{l}
f\left( x,y,z\right) =\frac{\partial f}{\partial x}\left( x,y,z\right) =%
\frac{\partial f}{\partial y}\left( x,y,z\right) =0 \\ 
\text{and} \\ 
\frac{\partial ^{2}f}{\partial x\partial y}\left( x,y,z\right) =\frac{%
\partial ^{3}f}{\partial x\partial y\partial z}\left( x,y,z\right) =0%
\end{array}%
\right. 
\begin{array}{l}
\text{for }x=y=0,z\in \mathbb{R} \\ 
\\ 
\text{at }x=y=z=0\text{.}%
\end{array}
\label{intro3}
\end{equation}

Note that a third derivative of $f$ enters into (\ref{intro3}), even though
we are merely looking for solutions $F=\left( F_{1},F_{2},F_{3}\right) \in
C^{0}$.

For general systems (\ref{intro1}), our result on Problem \ref{problem2a} is
as follows.

\begin{theorem}
\label{theorem1} Fix $m\geq 0$, and let $\left( A_{ij}\left( x\right)
\right) _{1\leq i\leq N,1\leq j\leq M}$ be a matrix of semialgebraic
functions on $\mathbb{R}^{n}$. Then there exist linear partial differential
operators $L_{1},L_{2},\cdots ,L_{K}$, for which the following hold.

\begin{itemize}
\item Each $L_{\nu }$ acts on vectors $f=\left( f_{1},\cdots ,f_{N}\right)
\in C^{\infty }\left( \mathbb{R}^{n},\mathbb{R}^{N}\right) $, and has the
form 
\begin{equation*}
L_{\nu }f\left( x\right) =\sum_{i=1}^{N}\sum_{\left\vert \alpha \right\vert
\leq \bar{m}}a_{\nu i\alpha }\left( x\right) \partial ^{\alpha }f_{i}\left(
x\right) 
\end{equation*}%
where the coefficients $a_{\nu i\alpha }$ are semialgebraic. (Perhaps $\bar{m%
}>m$.)

\item Let $f=\left( f_{1},\cdots ,f_{N}\right) \in C^{\infty }\left( \mathbb{%
R}^{n},\mathbb{R}^{N}\right) $. Then the system \eqref{intro1} admits a
solution $F=\left( F_{1},\cdots ,F_{M}\right) \in C^{m}\left( \mathbb{R}^{n},%
\mathbb{R}^{M}\right) $ if and only if $L_{\nu }f=0$ on $\mathbb{R}^{n}$ for
each $\nu =1,\cdots ,K$.
\end{itemize}
\end{theorem}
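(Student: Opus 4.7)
The plan is to extract from the $C^m$-solvability criterion of \cite{fl-jets} a pointwise linear condition on the $\bar m$-jets of $f$ (for some $\bar m\ge m$ determined by the system), and then to package that condition as the vanishing of finitely many linear PDEs with semialgebraic coefficients.

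\emph{Jet-level reduction.} Any $C^m$ solution $F$ of (\ref{intro1}) must satisfy, at every $x\in\mathbb{R}^n$, the jet-level equation $\sum_j A_{ij}(x)\,P_j^x \equiv J^{\bar m}_x f_i \pmod{\mathfrak{m}_x^{\bar m+1}}$ for $P_j^x:=J^{\bar m}_x F_j$, together with the Whitney/Glaeser coherence conditions needed to glue $x\mapsto P^x$ to a $C^m$ function. Iterating the Glaeser-type refinement used in \cite{fl-jets}, adapted to semialgebraic coefficients, stabilizes after finitely many steps and produces a semialgebraic bundle $H$ whose fiber $H(x)\subset\mathcal{P}^{\bar m}_M$ is an affine subspace depending semialgebraically on $x$ and on $J^{\bar m}_x f$, such that a $C^m$ solution of (\ref{intro1}) exists if and only if $H(x)\neq\emptyset$ for every $x$. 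Because every constraint in the construction is linear in $f$, the nonemptiness of $H(x)$ depends on $f$ only through a linear condition on $J^{\bar m}_x f$: there is a semialgebraic linear subspace $V(x)\subset\mathcal{P}^{\bar m}_N$, independent of $f$, so that $H(x)\neq\emptyset \iff J^{\bar m}_x f\in V(x)$.

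\emph{From pointwise conditions to PDEs.} Fix a basis of $\mathcal{P}^{\bar m}_N$ and let $\Pi(x):\mathcal{P}^{\bar m}_N\to\mathcal{P}^{\bar m}_N$ denote orthogonal projection onto $V(x)^\perp$. Since $V$ is semialgebraic and $\mathcal{P}^{\bar m}_N$ is finite-dimensional, the matrix entries of $\Pi(x)$ are semialgebraic functions of $x$, and the operator $Lf(x):=\Pi(x)[J^{\bar m}_x f]$ has scalar components of exactly the required form
\[
L_\nu f(x) \;=\; \sum_{i=1}^N\sum_{|\alpha|\le\bar m} a_{\nu i\alpha}(x)\,\partial^\alpha f_i(x),
\]
with semialgebraic coefficients $a_{\nu i\alpha}$ (one may take $K\le\dim\mathcal{P}^{\bar m}_N$). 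By the jet-level reduction, $L_\nu f\equiv 0$ for every $\nu$ is equivalent to $J^{\bar m}_x f\in V(x)$ for every $x$, which in turn is equivalent to the existence of a $C^m$ solution.

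\emph{Main obstacle.} The technical heart of the argument is the jet-level reduction: adapting the refinement of \cite{fl-jets} to systems with (possibly discontinuous) semialgebraic coefficients $A_{ij}$, proving that it stabilizes in finitely many steps, and ensuring that the successive bundles remain semialgebraic and linear in the $C^\infty$ datum $f$. A subtle point is that the dimension of $V(x)$ may jump along lower-dimensional semialgebraic strata, so one must arrange for $\Pi(x)$ to remain semialgebraic across those strata; this is handled by a semialgebraic cell decomposition of $\mathbb{R}^n$ into pieces on which $V(x)$ has constant dimension and $\Pi(x)$ is manifestly semialgebraic, followed by gluing. Once these ingredients are in place, the second step is essentially a repackaging of the pointwise criterion as a finite list of differential operators.
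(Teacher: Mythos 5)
Your proposal correctly identifies the general shape of the argument (iterated Glaeser refinement plus packaging the resulting pointwise jet condition as linear PDEs), but it assumes exactly the fact that constitutes the bulk of the paper. Specifically, you write that iterating the Glaeser refinement ``produces a semialgebraic bundle $H$ whose fiber $H(x)$ is an affine subspace depending semialgebraically on $x$ and on $J_x^{\bar m}f$,'' and then conclude that nonemptiness of $H(x)$ is ``a linear condition on $J^{\bar m}_x f$.'' This is not a consequence of the constraints being linear in $f$; it is the central difficulty. The Glaeser refinement at a point $x_0$ is defined by infima over $(P_1,\dots,P_k)$ and limits as nearby points $x_1,\dots,x_k\to x_0$, so after even one refinement the fiber at $x_0$ depends a priori on the germ of $f$ near $x_0$, not on any finite-order jet of $f$ at $x_0$. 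Showing that this dependence can be collapsed to a condition on $J^{\bar{\bar m}}_{x_0}f$ for some (larger, but finite) $\bar{\bar m}$, semialgebraically in $x_0$, is exactly what Propositions~\ref{SAQF1} and~\ref{SAQF2} establish, via a delicate induction on the dimension of a semialgebraic stratum, and that induction in turn rests on the Growth Lemma (Lemma~\ref{growthlemma}, a consequence of \L ojasiewicz--Wachta) to control the quadratic form's norm function $A(x_0,\dots,x_k)$ near lower-dimensional strata. Your proposal does not engage with this mechanism at all; the ``Main obstacle'' paragraph names the problem but proposes no route through it beyond a cell decomposition remark, which is about the unrelated issue of the jumping dimension of $V(x)$ after the hard part is done.

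Two further structural pieces are missing. First, the paper does not use the standard Glaeser refinement of \cite{fl-jets} but a strengthened version requiring a uniform bound (condition (GR1)) in addition to the limit condition (GR2); the boundedness clause is what makes Proposition~\ref{SAQF1} (boundedness) and Proposition~\ref{SAQF2} (limit) interlock in the induction, and without it the reduction to finite jets fails. Second, the passage from jets at all nearby tuples $(x_0,x_1,\dots,x_{\bar k})$ to a genuinely pointwise condition $J^{\bar{\bar m}}_{x}f\in H^{new}(x)$ relies on Lemma~\ref{Lemma-NDP1} and Corollary~\ref{corollary-NDP1}, which show that multiplication by a cutoff $\varphi$ commutes with the strong Glaeser refinement; this is used to localize the constraints at each $x_i$ separately by choosing $\varphi$ supported near one point. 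Finally, Theorem~\ref{theorem1} is stated on all of $\mathbb{R}^n$, whereas the core argument requires compactness (it is Theorem~\ref{statement-main-theore}); the transfer uses a semialgebraic diffeomorphism of $\mathbb{R}^n$ onto the open unit cube together with Lemma~\ref{addendumlemma}, none of which appears in your proposal.
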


For the Epstein-Hochster equation (\ref{intro2}), with $m=0,$ the operators $%
L_{\nu }$ are 
\begin{equation*}
\mathbb{I}_{x=y=0}\partial ^{0},\mathbb{I}_{x=y=0}\partial _{x},\mathbb{I}%
_{x=y=0}\partial _{y},\mathbb{I}_{x=y=z=0}\partial _{xy}^{2},\mathbb{I}%
_{x=y=z=0}\partial _{xyz}^{3}, 
\end{equation*}%
where $\mathbb{I}$ denotes the indicator function. (Compare with (\ref%
{intro3}).)

Our proof of Theorem \ref{theorem1} is constructive. In principle, we can
compute the operators $L_{\nu }$ from the data $m$, $\left( A_{ij}\left(
x\right) \right) _{1\leq i\leq N,1\leq j\leq M}$.

In \cite{cf-luli-generator}, we apply Theorem \ref{theorem1} to solve
Problem \ref{problem2}. The idea is as follows.

Given linear partial differential operators $L_{1},\cdots ,L_{K}$ with
semialgebraic coefficients (not necessarily given by Theorem \ref{theorem1}%
), we introduce the $\mathcal{R}$-module of all polynomial vectors 
\begin{equation*}
f=\left( f_{1},\cdots ,f_{N}\right) \text{ such that }L_{\nu }\left(
Pf\right) =0\text{ on }\mathbb{R}^{n}\text{ for each }\nu =1,\cdots ,K, 
\end{equation*}%
and for every polynomial $P$. (Recall $\mathcal{R}$ is the ring of
polynomials on $\mathbb{R}^{n}$.) Let us call this $\mathcal{R}$-module $%
\mathcal{M}\left( L_{1},\cdots ,L_{K}\right) $.

If, as in Theorem \ref{theorem1}, the polynomial vectors $f$ annihilated by
the $L_{\nu } $ already form an $\mathcal{R}$-module, then that $\mathcal{R}$%
-module coincides with $\mathcal{M}\left( L_{1},\cdots ,L_{K}\right) $.

In particular, the module $\mathcal{M}$ in Problem \ref{problem2} satisfies $%
\mathcal{M=\mathcal{M}}\left( L_{1},\cdots ,L_{K}\right) $ for the $%
L_{1},\cdots ,L_{K}$ given by Theorem \ref{theorem1}.

Consequently, Problem \ref{problem2} is reduced to the following problem of
computational algebra.

\begin{problem}
\label{problem2b}Given linear partial differential operators $L_{1},\cdots
,L_{K}$ with semialgebraic coefficients, exhibit generators for the $%
\mathcal{R}$-module $\mathcal{M}\left( L_{1},\cdots ,L_{K}\right) $.
\end{problem}

We solve Problem \ref{problem2b} in \cite{cf-luli-generator}, completing the
solution of Problem \ref{problem2}.

Thus, by posing Problem \ref{problem2a}, we have split Problem \ref{problem2}
into an analysis problem and an algebra problem.

Let us sketch the proof of Theorem \ref{theorem1}. We oversimplify to bring
out the main ideas. The correct discussion appears in Sections \ref{prelim}, $\cdots$, \ref{passtononcompact} below.

To prepare the way, we introduce notation. For $x\in \mathbb{R}^{n}$ and $%
F\in C^{m}\left( \mathbb{R}^{n},\mathbb{R}^{D}\right) $, we write $%
J_{x}^{\left( m\right) }F$ (the \textquotedblleft $m$-jet" of $F$ at $x$) to
denote the $m^{th}$ order Taylor polynomial of $F$ at $x$.

Thus, $J_{x}^{\left( m\right) }F$ belongs to $\mathcal{P}^{\left( m\right)
}\left( \mathbb{R}^{n},\mathbb{R}^{D}\right) $, the vector space of all $%
\mathbb{R}^{D}$-valued polynomials of degree at most $m$ on $\mathbb{R}^{n}$.

If $D=1$, we write $\mathcal{P}^{\left( m\right) }\left( \mathbb{R}%
^{n}\right) $ in place of $\mathcal{P}^{\left( m\right) }\left( \mathbb{R}%
^{n},\mathbb{R}\right) $.

For $F,G\in C^{m}\left( \mathbb{R}^{n}\right) $ and $x\in \mathbb{R}^{n}$,
we have $J_{x}^{\left( m\right) }\left( FG\right) =J_{x}^{\left( m\right)
}F\odot _{x}J_{x}^{\left( m\right) }G$, where $P\odot _{x}Q:=J_{x}^{(m)}\left(
PQ\right) $ for $P,Q\in \mathcal{P}^{\left( m\right) }\left( \mathbb{R}%
^{n}\right) $. The multiplication $\odot _{x}$ makes $\mathcal{P}^{\left(
m\right) }\left( \mathbb{R}^{n}\right) $ into a ring $\mathcal{R}_{x}^{(m)}$%
, the \textquotedblleft ring of $m$-jets at $x$". Similarly, the
multiplication $Q\odot _{x}\left( P_{1},\cdots ,P_{M}\right) :=\left( Q\odot
_{x}P_{1},\cdots ,Q\odot _{x}P_{M}\right) $ makes $\mathcal{P}^{\left(
m\right) }\left( \mathbb{R}^{n},\mathbb{R}^{M}\right) $ into an $\mathcal{R}%
_{x}^{\left( m\right) }$-module.

We can now explain our solution \cite{fl-jets} to Problem \ref{problem1};
later, we will apply what we have learned to Problem \ref{problem2a}. Thus,
let us fix $m\geq 0$, and let $A_{ij}$ and $f_{i}$ be given functions. We
investigate whether (\ref{intro1}) has a solution $F\in C^{m}\left( \mathbb{R%
}^{n},\mathbb{R}^{M}\right) $.

The idea is to construct families $\mathscr{H}=\left( H_{x}\right) _{x\in 
\mathbb{R}^{n}}$ of affine subspaces $H_{x}\subset \mathcal{P}^{\left(
m\right) }\left( \mathbb{R}^{n},\mathbb{R}^{M}\right) $, such that any $%
C^{m} $ solution of (\ref{intro1}) necessarily satisfies
\begin{equation}
J_{x}^{\left( m\right) }F\in H_{x}\text{ for all }x\in \mathbb{R}^{n}\text{.}
\label{intro4}
\end{equation}

To start with, we can simply take

\begin{itemize}
\item[\refstepcounter{equation}\text{(\theequation)}\label{intro5}] {$%
\widehat{\mathscr{H}}=\left( \hat{H}_{x}\right) _{x\in \mathbb{R}^{n}},$
where 
\begin{equation*}
\hat{H}_{x}=\left\{ P=\left( P_{1},\cdots ,P_{M}\right) \in \mathcal{P}%
^{\left( m\right) }\left( \mathbb{R}^{n},\mathbb{R}^{M}\right)
:\sum_{j=1}^{M}A_{ij}\left( x\right) P_{j}\left( x\right) =f_{i}\left(
x\right) , \text{ }\left( i=1,\cdots ,N\right) \right\}.
\end{equation*}%
}
\end{itemize}

We allow the empty set as an affine subspace of $\mathcal{P}^{\left(
m\right) }\left( \mathbb{R}^{n},\mathbb{R}^{M}\right) $. This can already
happen for $\widehat{\mathscr{H}}$ given by (\ref{intro5}), if equations (%
\ref{intro1}) are inconsistent for some $x$. Obviously, (\ref{intro4})
cannot hold if some of the $H_{x}$ are empty.

The nonempty $H_{x}$ arising in our families $\mathscr{H}$ will have a
special form; they are translates of $\mathcal{R}_{x}^{\left( m\right) }$%
-submodules of $\mathcal{P}^{\left( m\right) }\left( \mathbb{R}^{n},\mathbb{R%
}^{M}\right) $.

Thus,

\begin{itemize}
\item[\refstepcounter{equation}\text{(\theequation)}\label{intro6}] {$%
\mathscr{H}=\left( H_{x}\right) _{x\in \mathbb{R}^{n}}$, where for each $x$,
either $H_{x}$ is empty, or $H_{x}=P_{x}+I\left( x\right) $, where $P_{x}\in 
\mathcal{P}^{\left( m\right) }\left( \mathbb{R}^{n},\mathbb{R}^{M}\right) $
and $I\left( x\right) \in \mathcal{P}^{\left( m\right) }\left( \mathbb{R}%
^{n},\mathbb{R}^{M}\right) $ is an $\mathcal{R}_{x}^{(m)}$-submodule.}
\end{itemize}

For instance, $\widehat{\mathscr{H}}$ given by (\ref{intro5}) has this form.

We call any $\mathscr{H}$ of the form (\ref{intro6}) a \textquotedblleft
bundle", and we call $F\in C^{m}\left( \mathbb{R}^{n},\mathbb{R}^{M}\right) $
a \textquotedblleft section" of the bundle $\mathscr{H}$ if (\ref{intro4})
holds. Also, if $\mathscr{H}=\left( H_{x}\right) _{x\in \mathbb{R}^{n}}$ and 
$\mathscr{H}^{\prime }=\left( H_{x}^{\prime }\right) _{x\in \mathbb{R}^{n}}$
are bundles, then we say that $\mathscr{H}^{\prime }$ is a \textquotedblleft
subbundle" of $\mathscr{H}$ if $H_{x}^{\prime }\subset H_{x}$ if all $x\in 
\mathbb{R}^{n}$. We write $\mathscr{H}$ $\supset \mathscr{H}^{\prime }$ to
indicate that $\mathscr{H}^{\prime }$ is a subbundle of $\mathscr{H}$.

We call $H_{x_{0}}$ the \textquotedblleft fiber" of $\mathscr{H}=\left(
H_{x}\right) _{x\in \mathbb{R}^{n}}$ at $x_{0}$.

Immediately from the definition (\ref{intro5}), we see that a $C^{m}$
solution of the system (\ref{intro1}) is precisely a section of the bundle $%
\widehat{\mathscr{H}}$. Therefore, Problem \ref{problem1} is a special case
of the following.

\begin{problem}
\label{problem1forbundles}Given a bundle $\mathscr{H}$, decide whether $%
\mathscr{H}$ has a section.
\end{problem}

We solve this problem using the notion of \textquotedblleft Glaeser
refinement". The idea is as follows. Let $\mathscr{H}=\left( H_{x}\right)
_{x\in \mathbb{R}^{n}}$ be a bundle, and let $x_{0}\in \mathbb{R}^{n}$. By
definition, any section $F$ of $\mathscr{H}$ must satisfy $J_{x_{0}}^{\left(
m\right) }F\in H_{x_{0}}$. However, $H_{x_{0}}$ may contain polynomials $%
P_{0}\in \mathcal{P}^{\left( m\right) }\left( \mathbb{R}^{n},\mathbb{R}%
^{M}\right) $ that can never arise as the $m$-jet at $x_{0}$ of any section.

In that case, we may replace $\mathscr{H}$ by a subbundle $\tilde{\mathscr{H}}$ without losing any sections. Let us see how such an $\tilde{\mathscr{H}}$ can be
defined.

Fix $x_{0}\in \mathbb{R}^{n}$ and $P_{0}\in H_{x_{0}}$. Suppose $F$ is a
section of $\mathscr{H}$, with $J_{x_{0}}^{\left( m\right) }F=P_{0}$. Fix a
large integer constant $k$ (determined by $m,n,M),$ and let $x_{1},\cdots
,x_{k}\in \mathbb{R}^{n}$ lie close to $x_{0}$.

Setting $P_{i}=J_{x_{i}}^{\left( m\right) }F$ for $i=1,\cdots ,k$, we have $%
P_{1}\in H_{x_{1}},P_{2}\in H_{x_{2}},\cdots ,P_{k}\in H_{x_{k}}$, and

\begin{itemize}
\item[\refstepcounter{equation}\text{(\theequation)}\label{intro7}] {$%
\sum_{0\leq i< j \leq k }\sum_{\left\vert \alpha \right\vert \leq m}\left( \frac{%
\left\vert \partial ^{\alpha }\left( P_{i}-P_{j}\right) \left( x_{j}\right)
\right\vert }{\left\vert x_{i}-x_{j}\right\vert ^{m-\left\vert \alpha
\right\vert }}\right) ^{2}\rightarrow 0$ as $x_{1},\cdots ,x_{k}\rightarrow
x_{0}$, by Taylor's theorem.}
\end{itemize}

Note that $P_{0},\cdots ,P_{k}$ enter into (\ref{intro7}), but $P_{0}$ plays
a different r\^ole from $P_{1},\cdots ,P_{k}$.

The above remarks lead us to define the Glaeser refinement of the bundle $%
\mathscr{H}=\left( H_{x}\right) _{x\in \mathbb{R}^{n}}$ by setting $%
\mathscr{G}\left( \mathscr{H}\right) =\left( \mathscr{\tilde{H}}_{x}\right)
_{x\in \mathbb{R}^{n}}$, where for each $x_{0}\in \mathbb{R}^{n}$, $\tilde{H}%
_{x_{0}}$ consists of those $P_{0}\in H_{x_{0}}$ such that

\begin{itemize}
\item[\refstepcounter{equation}\text{(\theequation)}\label{intro8}] {$\min
\left\{ \sum_{0\leq i <j \leq k}\sum_{\left\vert \alpha \right\vert \leq m}\left( 
\frac{\left\vert \partial ^{\alpha }\left( P_{i}-P_{j}\right) \left(
x_{0}\right) \right\vert }{\left\vert x_{i}-x_{j}\right\vert ^{m-\left\vert
\alpha \right\vert }}\right) ^{2}:P_{1}\in H_{x_{1}},\cdots ,P_{k}\in
H_{x_{k}}\right\} $ tends to zero \\ as $x_{1},\cdots ,x_{k}\rightarrow x_{0}$.}
\end{itemize}

The Glaeser refinement has three basic properties.

\begin{itemize}
\item $\mathscr{G}\left( \mathscr{H}\right) $ is a subbundle of $\mathscr{H}$%
.

\item $\mathscr{G}\left( \mathscr{H}\right) $ and $\mathscr{H}$ have the
same sections, as we saw above.

\item $\mathscr{G}\left( \mathscr{H}\right) $ can in principle be computed
from $\mathscr{H}$, thanks to the explicit nature of (\ref{intro8}).
\end{itemize}

Note that $\mathscr{G}\left( \mathscr{H}\right) $ may have empty fibers,
even if $\mathscr{H}$ has none. In that case, we know that $\mathscr{H}$ has
no sections.

Starting from a given bundle $\mathscr{H}$, we can now perform
\textquotedblleft iterated Glaeser refinement" to pass to ever smaller
subbundles $\mathscr{H}^{\left( 1\right) },\mathscr{H}^{\left( 2\right) }$,
etc., without losing sections. We set $\mathscr{H}^{\left( 0\right) }=%
\mathscr{H}$, and for $l\geq 0$, we set $\mathscr{H}^{\left( l+1\right) }=%
\mathscr{G}\left( \mathscr{H}^{\left( l\right) }\right) $. Thus, by an
obvious induction on $l$, $\mathscr{H}=\mathscr{H}^{\left( 0\right) }\supset %
\mathscr{H}^{\left( 1\right) }\supset \mathscr{H}^{\left( 2\right) }\supset
\cdots ,$ yet $\mathscr{H}$ and $\mathscr{H}^{\left( l\right) }$ have the
same sections.

In principle, each $\mathscr{H}^{\left( l\right) }$ can be computed from $%
\mathscr{H}$.

The main result of \cite{fl-jets} gives the

\underline{Solution to Problem \ref{problem1forbundles}}: {\emph{For a large
enough integer constant }} $l_{\ast }$\emph{\ determined by }$m,n,M,$\emph{\
the following holds. }

\emph{Let }$\mathscr{H}$\emph{\ be a bundle, and let }$\mathscr{H}^{\left(
0\right) }, \mathscr{H}^{\left( 1\right) }, \mathscr{H}^{\left( 2\right)
},\cdots $\emph{\ be its iterated Glaeser refinements. Then }$\mathscr{H}$%
\emph{\ has a section if and only if }$\mathscr{H}^{\left( l_{\ast }\right) }
$\emph{\ has no empty fibers.}

In particular, this solves Problem \ref{problem1} for systems of equations (%
\ref{intro1}). This concludes our discussion of Problem \ref{problem1}.

We now want to apply the above to Problem \ref{problem2a}. To do so, we have
to understand how the iterated Glaeser refinements arising from the bundle $%
\widehat{\mathscr{H}}$ in (\ref{intro5}) depend on the right-hand side $%
f=\left( f_{1},\cdots ,f_{N}\right) $ in (\ref{intro1}), assuming $f\in
C^{\infty }.$

This gives rise to the study of bundles of the form

\begin{itemize}
\item[\refstepcounter{equation}\text{(\theequation)}\label{intro9}] {$%
\mathscr{H}_{f}=\left( T\left( x\right) J_{x}^{\left( \bar{m}\right)
}f+I\left( x\right) \right) _{x\in \mathbb{R}^{n}}$, where $I\left( x\right)
\subset \mathcal{P}^{\left( m\right) }\left( \mathbb{R}^{n},\mathbb{R}%
^{M}\right) $ is an $\mathcal{R}_{x}^{(m)}$-submodule depending
semialgebraically on $x$, and $T\left( x\right) :\mathcal{P}^{\left( \bar{m}%
\right) }\left( \mathbb{R}^{n},\mathbb{R}^{N}\right) \rightarrow \mathcal{P}%
^{\left( m\right) }\left( \mathbb{R}^{n},\mathbb{R}^{M}\right) $ is a linear
map, also depending semialgebraically on $x$.}
\end{itemize}

We want to understand how the Glaeser refinement of the bundle $\mathscr{H}%
_{f}$ in (\ref{intro9}) depends on $f\in C^{\infty }$. In particular, we
want to know when that Glaeser refinement has no empty fibers. Under
suitable assumptions on $T\left( x\right) $ in (\ref{intro9}), we prove the
following:

\begin{itemize}
\item[\refstepcounter{equation}\text{(\theequation)}\label{intro10}] {The
fibers of $\mathscr{G}\left( \mathscr{H}_{f}\right) $ are all non-empty if
and only if $f$ is annihilated by finitely many linear partial differential
operators $L_{1},\cdots ,L_{K}$ \ with semialgebraic coefficients.}
\end{itemize}

\begin{itemize}
\item[\refstepcounter{equation}\text{(\theequation)}\label{intro11}] {If the
fibers of $\mathscr{G}\left( \mathscr{H}_{f}\right) $ are all non-empty,
then $\mathscr{G}\left( \mathscr{H}_{f}\right) $ again has the form (\ref%
{intro9}), possibly with a smaller $I\left( x\right) $, a larger $\bar{m}$, and a
different $T\left( x\right) $.}
\end{itemize}

This allows us to keep track of the $f$-dependence of the iterated Glaeser
refinements of the bundle $\widehat{\mathscr{H}}$ in (\ref{intro5}), thus
proving Theorem \ref{theorem1}.

Let us say a few words about the proof of (\ref{intro10}) and (\ref{intro11}%
).

Because a quadratic form in (\ref{intro8}) lies at the heart of the matter,
we have to understand quadratic forms acting on the jets of a function $f\in
C^{\infty }\left( \mathbb{R}^{n},\mathbb{R}^{N}\right) $ at points $%
x_{1},\cdots ,x_{k}\in \mathbb{R}^{n}$. More precisely, suppose we are given
a positive semidefinite quadratic form

\begin{itemize}
\item[\refstepcounter{equation}\text{(\theequation)}\label{intro13}] {$%
\left( P_{0},P_{1},\cdots ,P_{k}\right) \mapsto Q\left(
x_{0},P_{0},x_{1},P_{1},\cdots ,x_{k},P_{k}\right) $ depending
semialgebraically on points $x_{1},\cdots ,x_{k}\in \mathbb{R}^{n}$. Here, $%
P_{0}\in \mathcal{P}^{\left( m\right) }\left( \mathbb{R}^{n},\mathbb{R}%
^{M}\right) $, while $P_{1},\cdots ,P_{k}\in \mathcal{P}^{\left( \bar{m}%
\right) }\left( \mathbb{R}^{n},\mathbb{R}^{N}\right) $.}
\end{itemize}

We fix $x_{0},P_{0},$ and let $x_{1},\cdots ,x_{k}$ vary. We have to
characterize the functions $f\in C^{\infty }\left( \mathbb{R}^{n},\mathbb{R}%
^{N}\right) $ such that $Q\left( x_{0},P_{0},x_{1},J_{x_{1}}^{\left( \bar{m}%
\right) }f,\cdots ,x_{k},J_{x_{k}}^{\left( \bar{m}\right) }f\right)
\rightarrow 0$ as $x_{1},\cdots ,x_{k}\rightarrow x_{0}$.

Section \ref{statement-quadform} contains our results on this problem,
namely Propositions \ref{SAQF1} and \ref{SAQF2}. These propositions are
proven by induction on the dimension of a relevant semialgebraic set. To
make the induction work, we have to allow our quadratic form (\ref{intro13})
to depend on additional points $z_{1},\cdots ,z_{L}$. We refer the reader to
Section \ref{proofofprops23} for full details. Section \ref{proofofprops23} contains the  
main work in our proof of Theorem \ref{theorem1}.

We will first establish the following variant of Theorem \ref{theorem1}.

\begin{theorem}
\label{statement-main-theore}Let $E\subset \mathbb{R}^{n}$ be compact,
semialgebraic. Let $\left( A_{ij}\left( x\right) \right) _{1\leq i\leq
N,1\leq j\leq M}$ be a matrix of semialgebraic functions defined on $E$. Let 
$m\geq 0$ be given. Then there exist linear partial differential operators $%
L_{\nu }$ $\left( 1\leq \nu \leq \nu _{\max }\right) $, for which the
following hold.

\begin{itemize}
\item Each $L_{\nu }$ has semialgebraic coefficients and carries functions
in $C^{\infty }\left( \mathbb{R}^{n},\mathbb{R}^{N}\right) $ to
scalar-valued functions on $\mathbb{R}^{n}$.

\item Let $f=\left( f_{1},\cdots ,f_{N}\right) \in C^{\infty }\left( \mathbb{%
R}^{n},\mathbb{R}^{N}\right) $. Then there exist $F_{1},\cdots ,F_{M}\in
C^{m}\left( \mathbb{R}^{n}\right) $ such that 
\begin{equation*}
\sum_{j=1}^{M}A_{ij}\left( x\right) F_{j}\left( x\right) =f_{i}\left(
x\right)
\end{equation*}%
all $x\in E$, $i=1,\cdots ,N$
\end{itemize}

if and only if $L_{\nu }f=0$ on $E$ for all $\nu =1,\cdots ,\nu _{\max }$.
\end{theorem}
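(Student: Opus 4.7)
The plan is to adapt the iterated Glaeser-refinement strategy sketched in the introduction to the compact semialgebraic set $E$. First I would attach to each $x\in E$ the affine subspace
\[
\hat{H}_x=\left\{P\in\mathcal{P}^{(m)}(\mathbb{R}^n,\mathbb{R}^M):\sum_{j=1}^{M}A_{ij}(x)P_j(x)=f_i(x)\text{ for }i=1,\ldots,N\right\},
\]
and set $\hat{H}_x=\mathcal{P}^{(m)}(\mathbb{R}^n,\mathbb{R}^M)$ for $x\notin E$. The family $\widehat{\mathscr{H}}_f=(\hat{H}_x)_{x\in\mathbb{R}^n}$ is then a bundle in the sense of \rf{intro6}, and a $C^m$ section of $\widehat{\mathscr{H}}_f$ is exactly an $F=(F_1,\ldots,F_M)\in C^m(\mathbb{R}^n,\mathbb{R}^M)$ solving the system pointwise on $E$. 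Moreover $\widehat{\mathscr{H}}_f$ has the semialgebraic, $f$-linear form \rf{intro9} with $\bar m=0$: on $E$ one writes $\hat{H}_x=T_0(x)J_x^{(0)}f+I_0(x)$ with $T_0(x)$ a semialgebraic right inverse (where it exists) to $P\mapsto (\sum_j A_{ij}(x)P_j(x))_i$ and $I_0(x)$ the $\mathcal{R}_x^{(m)}$-submodule of $M$-tuples of polynomials annihilated by that map at $x$; off $E$ the fiber is trivially of that form.

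Next I would invoke the solution to Problem \ref{problem1forbundles} quoted from \cite{fl-jets}: there is an integer $l_{*}=l_{*}(m,n,M)$ such that $\widehat{\mathscr{H}}_f$ admits a $C^m$ section if and only if $\widehat{\mathscr{H}}_f^{(l_{*})}$ has no empty fiber. Because $E$ is compact and fibers off $E$ are automatically full, emptiness can arise only over $E$. The goal is then to prove, by induction on $l=0,1,\ldots,l_{*}$, that there exist finitely many linear partial differential operators $L_1^{(l)},\ldots,L_{K_l}^{(l)}$ with semialgebraic coefficients on $E$ whose common vanishing on $E$ is equivalent to $\widehat{\mathscr{H}}_f^{(l)}$ having no empty fiber over $E$, and moreover that when they vanish, $\widehat{\mathscr{H}}_f^{(l)}$ again has the form \rf{intro9}, with some $\bar m=\bar m_l$, some semialgebraic $T_l(x)$, and a (typically smaller) semialgebraic $\mathcal{R}_x^{(m)}$-submodule $I_l(x)$. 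Concatenating the finite lists over $l=0,\ldots,l_{*}-1$, and prepending the pointwise solvability condition $f(x)\in\mathrm{Range}(A(x))$, then yields the desired $L_1,\ldots,L_{\nu_{\max}}$.

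The base case is the previous paragraph. The inductive step is exactly \rf{intro10}--\rf{intro11}: once $\widehat{\mathscr{H}}_f^{(l)}$ has the form \rf{intro9}, the Glaeser-refinement condition \rf{intro8} becomes a positive semidefinite quadratic form of the type \rf{intro13} in the jets $J_{x_1}^{(\bar m_l)}f,\ldots,J_{x_k}^{(\bar m_l)}f$ with auxiliary parameter $P_0$ ranging over the previous fiber at $x_0$. Characterizing those $f\in C^{\infty}$ for which this quadratic form tends to zero as $x_1,\ldots,x_k\to x_0$ is the content of Propositions \ref{SAQF1} and \ref{SAQF2}, and those propositions simultaneously produce the new family of semialgebraic linear PDOs and the new presentation \rf{intro9} of $\widehat{\mathscr{H}}_f^{(l+1)}$, closing the induction.

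The main obstacle is the proof of Propositions \ref{SAQF1} and \ref{SAQF2}, which is deferred to Section \ref{proofofprops23}; it proceeds by induction on the dimension of a relevant semialgebraic set and genuinely requires enlarging the statement to accommodate additional auxiliary points $z_1,\ldots,z_L$. A secondary subtlety is that although the initial bundle has $\bar m=0$, successive refinements force $\bar m_l$ to grow (as already visible in the Epstein--Hochster example, where a third derivative of $f$ enters), so the final operators $L_\nu$ may have order well beyond $m$ and may involve discontinuous semialgebraic coefficients such as indicator functions of strata of $E$. Compactness of $E$ enters only to keep each stage of the induction confined to a fixed bounded domain, sidestepping behavior at infinity that will be dealt with separately in Section \ref{passtononcompact}.
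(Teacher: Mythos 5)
Your high-level strategy — reduce to a bundle of the form \rf{intro9}, iterate Glaeser refinement $l_{*}$ times, and at each stage invoke Propositions \ref{SAQF1} and \ref{SAQF2} to characterize properness via semialgebraic linear conditions — matches the paper's architecture. But there is a genuine gap in your inductive step: Propositions \ref{SAQF1} and \ref{SAQF2} do \emph{not} directly produce linear partial differential operators. What they produce are membership conditions of the form
\[
\bigl(P_0,\,J_{x_1}^{(\bar{\bar m})}f,\ldots,J_{x_{\bar k}}^{(\bar{\bar m})}f\bigr)\in H^{\text{bdd}}\bigl(x_0,x_1,\ldots,x_{\bar k}\bigr)\quad\text{for all }x_1,\ldots,x_{\bar k}\in E,
\]
which couple the jets of $f$ at $\bar k$ \emph{different} points simultaneously. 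Asserting "for each $x_0$ there exists $P_0$ satisfying all these" is therefore a condition of global nature, and there is no a priori reason it should be equivalent to finitely many pointwise differential equations $L_\nu f=0$. The missing idea is the \emph{decoupling} mechanism: one must verify the multiplicativity $\mathscr{H}_{\varphi f}=\varphi\odot\mathscr{H}_f$ for cutoffs $\varphi$ (Lemma \ref{Lemma-S2}), show that the strong Glaeser refinement commutes with $\varphi\odot$ when it is proper (Lemma \ref{Lemma-NDP1}, Corollary \ref{corollary-NDP1}), and then for each choice of distinct $x_0,\ldots,x_k$ test with $\varphi$ that is identically $1$ near one $x_i$ and $0$ near the others. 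This forces the coupled linear functionals $\mu_\nu^k[P_0]+\sum_i\lambda_{i,\nu}^k[J_{x_i}^{(\bar{\bar m})}f]$ to vanish term-by-term, so the conditions localize to a single-point condition on $(P_0,J_{x_0}^{(\bar{\bar m})}f)$ plus single-point conditions on each $J_{x}^{(\bar{\bar m})}f$; only then does eliminating $P_0$ yield genuine PDOs. This localization argument is the crux of Lemma \ref{Main-Lemma-onHf}, and it also enforces the strong (rather than standard) notion of Glaeser refinement. Your write-up treats it as if it were automatic.

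A secondary issue: the paper's notion of bundle requires the base $E$ to be compact, so the bundle is taken over $E$ itself rather than over all of $\mathbb{R}^n$ as you do; this is easily fixed by restriction and is not a real gap.
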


In Section \ref{passtononcompact}, we show how to pass from the compact case
to the noncompact case, and thus establish Theorem \ref{theorem1}.

This concludes our explanation of the proof of Theorem \ref{theorem1}. We
again warn the reader that the explanation is oversimplified, and that the
true story is to be found in Sections \ref{prelim}, $\cdots$, \ref%
{passtononcompact} below.

We should also warn the reader that although our results solve Problems \ref%
{problem1} and \ref{problem2} in principle, the calculations involved are
prohibitive in practice, except in the simplest cases.

The results of this paper were announced in \cite{f-luli}.

We are grateful to Matthias Aschenbrenner, Saugata Basu, Edward Bierstone, 
Zeev Dvir, J\'anos Koll\'ar,
Pierre Milman, Wieslaw Paw\l {}ucki, Ary Shaviv and the participants in the 9%
$^{th}$-11$^{th}$ Whitney workshops for valuable discussions, and to the
Technion -- Israel Institute of Technology, College of William and Mary, and
Trinity College Dublin, for hosting the above workshops. 

\section{Preliminaries}\label{prelim}

\subsection{Notation}

$\mathcal{P}^{(m)}\left( \mathbb{R}^{n},\mathbb{R}^{D}\right) $ denotes the
vector space of $\mathbb{R}^{D}$-valued polynomials of degree at most $m$ on 
$\mathbb{R}^{n}$. If $D=1$, we may write $\mathcal{P}^{(m)}\left( \mathbb{R}%
^{n}\right) $ in place of $\mathcal{P}^{(m)}\left( \mathbb{R}^{n},\mathbb{R}%
\right) $.

{\textbf{We depart from the notation used in the Introduction.}} From now on, $%
C^{m}\left( \mathbb{R}^{n},\mathbb{R}^{D}\right) $ denotes the space of all $%
\mathbb{R}^D$-valued functions on $\mathbb{R}^n$ whose derivatives up to
order $m$ are continuous and bounded on $\mathbb{R}^n$; $C^m_{loc}(\mathbb{R}%
^n, \mathbb{R}^D)$ denotes the space of $\mathbb{R}^D$-valued functions on $%
\mathbb{R}^n$ with continuous derivatives up to order $m$; $C^{\infty }_0
\left( \mathbb{R}^{n},\mathbb{R}^{D}\right) $ denotes the space of
infinitely differentiable $\mathbb{R}^D$-valued functions of compact support
on $\mathbb{R}^n$; $C^{\infty } \left( \mathbb{R}^{n},\mathbb{R}^{D}\right) $
denotes the space of infinitely differentiable $\mathbb{R}^D$-valued
functions on $\mathbb{R}^n$. If $D=1$, we write $C^m(\mathbb{R}^n),
C^{\infty}(\mathbb{R}^n), C_0^{\infty}(\mathbb{R}^n)$ in place of $C^m(%
\mathbb{R}^n,\mathbb{R}^D), C^{\infty}(\mathbb{R}^n,\mathbb{R}^D),
C^{\infty}_0 (\mathbb{R}^n,\mathbb{R}^D)$, respectively.

If $F\in C^{m}\left( \mathbb{R}^{n},\mathbb{R}^{D}\right) $ and $x\in 
\mathbb{R}^{n}$, then $J_{x}^{\left( m\right) }F$ (the \textquotedblleft $m$%
-jet" of $F$ at $x$) denotes the $m^{\text{th}}$ order Taylor polynomial of $%
F$ at $x$.

We write $\pi _{x}^{m^{\prime }\rightarrow m}:\mathcal{P}^{(m^{\prime
})}\left( \mathbb{R}^{n},\mathbb{R}^{D}\right) \rightarrow \mathcal{P}%
^{(m)}\left( \mathbb{R}^{n},\mathbb{R}^{D}\right) $ for the natural
projection from $m^{\prime }$-jets at $x$ to $m$-jets at $x$ $\left(
m^{\prime }\geq m\right) $.

\subsection{A simple consequence of Taylor's Theorem}

Let $F\in C^{\bar{\bar{m}}}\left( \mathbb{R}^{n},\mathbb{R}^{D}\right) $,
let $\bar{\bar{m}} \geq \bar{m}$, and let $x,y\in \mathbb{R}^{n}$. Then for $%
\left\vert \alpha \right\vert \leq \bar{m}$, we have 
\begin{eqnarray}
&&\partial ^{\alpha }\left\{ \pi _{x}^{\bar{\bar{m}}\rightarrow \bar{m}%
}J_{y}^{\left( \bar{\bar{m}}\right) }F-J_{x}^{\left( \bar{m}\right)
}F\right\} \left( x\right)  \notag \\
&=&\left. \partial _{z}^{\alpha }\left\{ \sum_{\left\vert \gamma \right\vert
\leq \bar{m}}\frac{1}{^{\gamma !}}\left[ \partial ^{\gamma }\left(
J_{y}^{\left( \bar{\bar{m}}\right) }F\right) \left( x\right) \right] \cdot
\left( z-x\right) ^{\gamma }-\sum_{\left\vert \gamma \right\vert \leq \bar{m}%
}\frac{1}{\gamma !}\partial ^{\gamma }F\left( x\right) \cdot \left(
z-x\right) ^{\gamma }\right\} \right\vert _{z=x}  \notag \\
&&\text{(by definition of }\pi _{x}^{\bar{\bar{m}}\rightarrow \bar{m}}\text{
and }J_{x}^{\left( \bar{m}\right) }F\text{)}  \notag \\
&=&\partial ^{\alpha }\left( J_{y}^{\left( \bar{\bar{m}}\right) }F\right)
\left( x\right) -\partial ^{\alpha }F\left( x\right)  \notag \\
&&\text{ (since }\partial _{z}^{\alpha }\left( z-x\right) ^{\gamma }=\alpha
!\delta _{\gamma \alpha }\text{ at }z=x\text{, }\delta _{\gamma \alpha }%
\text{ denotes the Kronecker delta)}  \notag \\
&=&\partial _{x}^{\alpha }\left\{ \sum_{\left\vert \gamma \right\vert \leq 
\bar{\bar{m}}}\frac{1}{\gamma !}\partial ^{\gamma }F\left( y\right) \cdot
\left( x-y\right) ^{\gamma }-F\left( x\right) \right\} \text{ (by definition
of }J_{y}^{\left( \bar{\bar{m}}\right) }F\text{).}  \label{1}
\end{eqnarray}%
The quantity \eqref{1} has absolute value (i.e. norm in $\mathbb{R}^{D}$) at
most 
\begin{equation*}
C_{0}\left\Vert F\right\Vert _{C^{\bar{\bar{m}}}\left( \mathbb{R}^{n},%
\mathbb{R}^{D}\right) }\cdot \left\vert x-y\right\vert ^{\bar{\bar{m}}%
-\left\vert \alpha \right\vert }\text{,}
\end{equation*}%
by Taylor's theorem, with $C_{0}$ depending only on $\bar{\bar{m}},n,D$.
Therefore, the following holds:

\begin{proposition}
\label{taylor'sthm}%
\begin{eqnarray*}
&&\left\vert \partial ^{\alpha }\left\{ \pi _{x}^{\bar{\bar{m}}\rightarrow 
\bar{m}}J_{y}^{\left( \bar{\bar{m}}\right) }F-J_{x}^{\left( \bar{m}\right)
}F\right\} \left( x\right) \right\vert \\
&\leq &C\left\Vert F\right\Vert _{C^{\bar{\bar{m}}}\left( \mathbb{R}^{n},%
\mathbb{R}^{D}\right) }\cdot \left\vert x-y\right\vert ^{\bar{\bar{m}}%
-\left\vert \alpha \right\vert }\text{ for }\bar{\bar{m}}\geq \bar{m}\geq
\left\vert \alpha \right\vert \text{, }F\in C^{\infty }_0\left( \mathbb{R}%
^{n},\mathbb{R}^{D}\right) ,
\end{eqnarray*}%
where $C$ depends only on $\bar{\bar{m}},n,D$.
\end{proposition}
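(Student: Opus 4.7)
The plan is to take the identity derived immediately above the proposition at face value, observing that it already reduces everything to a standard Taylor remainder estimate. Namely, the chain of equalities labelled \eqref{1} shows that
\[
\partial^{\alpha}\bigl\{\pi_x^{\bar{\bar m}\to\bar m}J_y^{(\bar{\bar m})}F - J_x^{(\bar m)}F\bigr\}(x)
\;=\;\partial_x^{\alpha}\!\left\{\sum_{|\gamma|\le\bar{\bar m}}\tfrac{1}{\gamma!}\partial^{\gamma}F(y)\,(x-y)^{\gamma}-F(x)\right\}.
\]
The expression in braces is precisely the Taylor remainder $T_y^{(\bar{\bar m})}F(x)-F(x)$, where $T_y^{(\bar{\bar m})}F$ denotes the $\bar{\bar m}$-th order Taylor polynomial of $F$ at $y$. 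So the task is to bound the $\alpha$-th partial derivative of this remainder at $x$.

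Next I would differentiate under the sum: for $|\alpha|\le\bar m\le\bar{\bar m}$, the $\alpha$-derivative of $T_y^{(\bar{\bar m})}F$ with respect to the argument is itself the Taylor polynomial at $y$, of degree $\bar{\bar m}-|\alpha|$, of $\partial^{\alpha}F$. Concretely,
\[
\partial_x^{\alpha}T_y^{(\bar{\bar m})}F(x)\;=\;\sum_{|\beta|\le\bar{\bar m}-|\alpha|}\tfrac{1}{\beta!}\partial^{\alpha+\beta}F(y)\,(x-y)^{\beta}\;=\;T_y^{(\bar{\bar m}-|\alpha|)}(\partial^{\alpha}F)(x).
\]
Therefore the quantity to estimate is the ordinary Taylor remainder of $\partial^{\alpha}F$ at base point $y$, evaluated at $x$, for a polynomial of degree $\bar{\bar m}-|\alpha|$. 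Since $F\in C^{\bar{\bar m}}$, the function $\partial^{\alpha}F$ is in $C^{\bar{\bar m}-|\alpha|}$, and the classical Taylor remainder theorem (integral form, or Lagrange form for a real-variable restriction along the segment from $y$ to $x$) yields
\[
\bigl|\partial^{\alpha}F(x)-T_y^{(\bar{\bar m}-|\alpha|)}(\partial^{\alpha}F)(x)\bigr|\;\le\;C_0\,\|F\|_{C^{\bar{\bar m}}(\mathbb{R}^n,\mathbb{R}^D)}\,|x-y|^{\bar{\bar m}-|\alpha|},
\]
with $C_0$ depending only on $\bar{\bar m}$, $n$, $D$. Combining with the identity from the first paragraph gives the stated bound.

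There is no real obstacle; the content is a rearrangement plus the standard remainder estimate. The only thing to be careful about is the bookkeeping in the identity \eqref{1}, specifically the use of $\partial_z^{\alpha}(z-x)^{\gamma}|_{z=x}=\alpha!\,\delta_{\gamma\alpha}$ to collapse the sum defining $\pi_x^{\bar{\bar m}\to\bar m}J_y^{(\bar{\bar m})}F$ evaluated in the $\alpha$-slot, which the excerpt already performs. After that the hypothesis $\bar{\bar m}\ge\bar m\ge|\alpha|$ is used only to guarantee that the remainder has positive order, ensuring the power $|x-y|^{\bar{\bar m}-|\alpha|}$ is nonnegative.
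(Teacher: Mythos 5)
Your proof is correct and follows the paper's approach: the paper derives the same identity \eqref{1} and then invokes ``Taylor's theorem'' for the bound, which is exactly what you do. Your only addition is to make the final step transparent by noting that $\partial_x^\alpha T_y^{(\bar{\bar m})}F = T_y^{(\bar{\bar m}-|\alpha|)}(\partial^\alpha F)$, reducing to the standard one-function remainder estimate; the paper leaves this bookkeeping implicit.
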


\subsection{Semialgebraic sets and functions}

Let $A:E\rightarrow \mathbb{R}$, where $E\subset \mathbb{R}^{N}$ is
semialgebraic. Recall that $A$ is a semialgebraic function if its graph $%
\left\{ \left( x,y\right) \in E\times \mathbb{R}: y=A\left( x\right)
\right\} $ is semialgebraic. In particular, semialgebraic functions needn't be continuous.

Note that, by definition, a semialgebraic function is finite everywhere on $%
E $. Thus, the following functions are not semialgebraic on $\mathbb{R}$:

\begin{itemize}
\item $f\left( x\right) =\left\{ 
\begin{array}{c}
\frac{1}{x^{2}} \\ 
\infty%
\end{array}%
\right. 
\begin{array}{c}
\text{if }x\not=0 \\ 
\text{if }x=0%
\end{array}%
$

\item $f\left( x\right) =\left\{ 
\begin{array}{c}
\frac{1}{x^{2}} \\ 
\text{undefined}%
\end{array}%
\right. 
\begin{array}{l}
\text{if }x\not=0 \\ 
\text{if }x=0\text{.}%
\end{array}%
$
\end{itemize}

However, the following function is semialgebraic:%
\begin{equation*}
h\left( x\right) =\left\{ 
\begin{array}{c}
\frac{1}{x^{2}} \\ 
17%
\end{array}%
\right. 
\begin{array}{c}
\text{if }x\not=0 \\ 
\text{if }x=0%
\end{array}%
\text{.}
\end{equation*}

The \underline{dimension} of a semialgebraic set $E\subset \mathbb{R}^{n}$
is the maximum of the dimensions of all the imbedded (not necessarily
compact) submanifolds of $\mathbb{R}^{n}$ that are contained in $E$.

For instance, in $\mathbb{R}^{3}$, the union of the $x$-$y$ plane and the $z$
axis has dimension $2$.

A map $\phi :E\rightarrow \mathbb{R}^{N}$ is semialgebraic if $\left\{
\left( x,y\right) \in E\times \mathbb{R}^{N}:y=\phi \left( x\right) \right\} 
$ is a semialgebraic set.

Again, semialgebraic maps $\phi :E\rightarrow \mathbb{R}^{N}$ are defined
everywhere on $E$.

%
%
%

\subsection{Limits}

Let $E$ be a metric space, let $f:E\rightarrow \mathbb{R}$ be a function,
and let $x\in E$ be given. As every student knows, $\lim_{y\rightarrow
x}f\left( y\right) =L$ means that given $\varepsilon >0$ there exists $%
\delta >0$ such that $\left\vert f\left( y\right) -L\right\vert <\varepsilon 
$ for all $y\in E$ with $\text{dist}\left( y,x\right) <\delta $.

We point out here that if $x$ is an isolated point of $E$, then $%
\lim_{y\rightarrow x}f\left( x\right) =L$ means simply that $f\left(
x\right) =L$.

Note that the function 
\begin{equation*}
f\left( x\right) =\left\{ 
\begin{array}{c}
0 \\ 
17%
\end{array}%
\right. 
\begin{array}{c}
\text{if }x\not=0 \\ 
\text{if }x=0%
\end{array}%
,
\end{equation*}%
defined on $\mathbb{R}$, does not satisfy $\lim_{y\rightarrow 0}f\left(
y\right) =0$.

Similarly, if $G \subset E \times \cdots \times E$, then the condition 
\begin{equation}
\lim_{\substack{ y_{1},\cdots ,y_{N}\rightarrow x  \\ \left(y_{1},\cdots
,y_{N}\right) \in G }} f(y_1,\cdots,y_N) = L  \label{2.4.1}
\end{equation}
is defined in the usual way via $\varepsilon$'s and $\delta$'s.

In particular, \eqref{2.4.1} holds vacuously if $G$ fails to contain points
arbitrarily close to $(x,x, \cdots, x)$.

\subsection{Computations with Semialgebraic Sets}\label{sec2.5}

In this section, we present some known technology for computations
involving semialgebraic sets. See the reference book \cite{basu}.

We begin by describing our model of computation. Our algorithms are to be
run on an idealized computer with standard von Neumann architecture \cite{vonneumann},
able to store and perform basic arithmetic operations on integers and
infinite precision real numbers, without roundoff errors or overflow
conditions. We suppose that our computer can access an ORACLE that solves
polynomial equations in one unknown. More precisely, the ORACLE answers
queries; a query consists of a non-constant polynomial $P$ (in one variable)
with real coefficients, and the ORACLE responds to a query $P$ by producing
a list of all the real roots of $P$.

Let us compare our model of computation with that of \cite{basu}.

All  arithmetic in \cite{basu} is performed within a subring $\Lambda$ of a real  
closed field $K$ (e.g. the integers sitting inside the reals). However,  
some algorithms in \cite{basu} produce as output a finite list of elements of  
$K$ not necessarily belonging to $\Lambda$. A field element $x_0$ arising in  
such an output is  specified by exhibiting a polynomial $P$ (in one  
variable) with coefficients in $\Lambda$ such that $P(x_0)=0$,  together with  
other data to distinguish $x_0$ from the other roots of $P$.

In our model of computation, we take $\Lambda$ and $K$ to consist of all real
numbers, and we query the ORACLE whenever \cite{basu} specifies a real number  
by means of a polynomial $P$ as above.

Next, we describe how we will represent a semialgebraic set $E$. We will
specify a Boolean combination of sets of the form

\LAQ{XXX}{$\left\{ \left(x_1, \cdots, x_n \right) \in \mathbb{R}^n:
P(x_1,\cdots, x_n)>0 \right\}$,}
\LAQ{YYY}{$\left\{ \left(x_1, \cdots, x_n \right) \in \mathbb{R}^n:
P(x_1,\cdots, x_n)<0 \right\}$, or}
\LAQ{ZZZ}{$\left\{ \left(x_1, \cdots, x_n \right) \in \mathbb{R}^n:
P(x_1,\cdots, x_n)=0 \right\}$}
for polynomials $P$ with real coefficients.

Of course it is possible to represent the same set $E$ by many different
Boolean combination of the above form, but that doesn't bother us.

We will specify a semialgebraic function by specifying its graph.

Let us mention a few basic algorithms from \cite{basu}.

\begin{itemize}
\item Given a semialgebraic set $E \subset \mathbb{R}^n$, we compute its
dimension. (See Algorithm 14.31 in \cite{basu}.)

\item Given a zero-dimensional (and consequently finite) semialgebraic set $%
E \subset \mathbb{R}^n$, we compute a list of all the elements of $E$. (See Algorithm 16.20 in \cite{basu}.)

\item Given a semialgebraic set, we produce a list of its connected components, exhibiting each component as a Boolean combination of sets of the form \eqref{XXX}, \eqref{YYY}, and \eqref{ZZZ}. (See Algorithm 16.20 in \cite{basu}.)

\item Given semialgebraic sets $E_1 \subset \mathbb{R}^{n_1}$, $E \subset
E_1 \times \mathbb{R}^{n_2}$, we check whether it is the case that
\begin{equation}\label{compute1}
\text{For
every }x \in E_1,\text{ there exists }y \in \mathbb{R}^{n_2}\text{ such that } (x,y)
\in E.     
\end{equation}
If \eqref{compute1} holds, we compute a (possibly discontinuous) semialgebraic function $F: E_1
\rightarrow \mathbb{R}^{n_2}$ such that $(x, F(x)) \in E$ for all $x \in E_1$. (See Algorithm 11.3 as well as Section 5.1 in \cite{basu}.) 
\end{itemize}

Next, we discuss ``elimination of quantifiers", a powerful tool to show that
certain sets are semialgebraic, and to compute those sets.

The sets in question consist of all $(x_1, \cdots, x_n) \in \mathbb{R}^n$
that satisfy a certain condition $\Phi(x_1, \cdots, x_n)$. Here, $\Phi(x_1,
\cdots, x_n)$ is a statement in a formal language, the ``first order
predicate calculus for the theory of real closed fields".

Rather than giving careful definitions, we illustrate with a few examples,
and refer the reader to \cite{carlos,basu}.

\begin{itemize}
\item If $E \subset \mathbb{R}^{n_1} \times \mathbb{R}^{n_2}$ is a given
semialgebraic set, and if $\pi: \mathbb{R}^{n_1} \times \mathbb{R}^{n_2}
\rightarrow \mathbb{R}^{n_1}$ denotes the natural projection, then we can
compute the semialgebraic set $\pi E$, because $\pi E$ consists of all $%
(x_1, \cdots, x_{n_1}) \in \mathbb{R}^{n_1}$ satisfying the condition
\begin{equation*}
\Phi (x_1, \cdots, x_{n_1}): \left(\exists y_1 \right)\cdots \left(\exists
y_{n_2} \right) \left( (x_1, \cdots, x_{n_1}, y_1, \cdots, y_{n_2}) \in
E\right).
\end{equation*}

\item Suppose $E \subset \mathbb{R}^n$ is semialgebraic. Then we can compute 
$E^{\text{closure}}$, the closure of $E$, because $E^{\text{closure}}$
consists of all $(x_1, \cdots, x_n)$ satisfying the condition
\begin{equation*}
\Phi (x_{1},\cdots ,x_{n}):\left( \forall \varepsilon >0\right) \left(
\exists y_{1}\right) \cdots \left( \exists y_{n}\right) \left( \left[ \left(
y_{1},\cdots ,y_{n}\right) \in E\right] \wedge \left[ \left(
x_{1}-y_{1}\right) ^{2}+\cdots +\left( x_{n}-y_{n}\right) ^{2}<\varepsilon
^{2}\right] \right) \text{.}
\end{equation*}%
In particular, $E^{\text{closure}}$ is semialgebraic.

\item Let $E,\underline{E}\subset \mathbb{R}^{n}$ be given semialgebraic
sets. Then we can compute the semialgebraic set 
\begin{equation*}
\tilde{E}=\left\{ 
\begin{array}{c}
\left( x_{1},\cdots ,x_{n},\underline{x}_{1},\cdots ,\underline{x}%
_{n}\right) \in E\times \underline{E}:\left( \underline{x}_{1},\cdots ,%
\underline{x}_{n}\right) \text{ is at least as close as} \\ 
\text{any point of }\underline{E}\text{ to }\left( x_{1},\cdots
,x_{n}\right) 
\end{array}%
\right\} ,
\end{equation*}%
because $\tilde{E}$ consists of all $\left( x_{1},\cdots ,x_{n},\underline{x}%
_{1},\cdots ,\underline{x}_{n}\right) \in \mathbb{R}^{2n}$ satisfying the
condition 
\begin{multline*}
\Phi \left( x_{1},\cdots ,x_{n},\underline{x}_{1},\cdots ,\underline{x}%
_{n}\right) :\left[ \left( x_{1},\cdots ,x_{n}\right) \in E\right] \wedge %
\left[ \left( \underline{x}_{1},\cdots ,\underline{x}_{n}\right) \in 
\underline{E}\right] \wedge  \\
\left[ \left( \forall y_{1}\right) \cdots \left( \forall y_{n}\right)
\left\{ \left[ \left( y_{1},\cdots ,y_{n}\right) \in \underline{E}\right]
\rightarrow \left( x_{1}-\underline{x}_{1}\right) ^{2}+\cdots +\left( x_{n}-%
\underline{x}_{n}\right) ^{2}\leq \left( x_{1}-y_{1}\right) ^{2}+\cdots
+\left( x_{n}-y_{n}\right) ^{2}\right\} \right] \text{.}
\end{multline*}
\end{itemize}

With a single exception, all the semialgebraic sets and functions arising in
our arguments in the following sections can be computed by obvious
applications of the above standard algorithms, together with elimination of
quantifiers. When that exception arises (in the next section), we explain how to  
deal with it.

\subsection{Growth of Semialgebraic functions}\label{sec2.6}

We will use a special case of a result of \L ojasiewicz and Wachta \cite%
{wachta}.

\begin{theorem}
\label{wachta} Let $S_1, S_2 \subset \mathbb{R}^n \times \mathbb{R}^p$ be
compact semialgebraic sets. For $x \in \mathbb{R}^n$, let $S_i(x) = \{y \in 
\mathbb{R}^p:(x,y) \in S_i \}$ $(i=1,2)$. Then there exists a positive
integer $K$ for which the following holds.

Given $x \in \mathbb{R}^n$ such that $S_1 (x) \cap S_2(x) \not = \emptyset$,
there exists a positive number $C(x)$ such that 
\begin{equation*}
\dist(y, S_2(x)) \geq C(x) \cdot \left[ \dist\left(y, S_1(x) \cap S_2(x)\right)\right]^K 
\end{equation*}
for all $y \in S_1(x)$.
\end{theorem}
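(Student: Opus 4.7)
My plan is as follows. For each fixed $x \in E := \pi_1(S_1 \cap S_2)$, the functions $\sigma_x(y) := \dist(y, S_2(x))$ and $\rho_x(y) := \dist(y, S_1(x) \cap S_2(x))$ are continuous semialgebraic functions of $y$ on the compact semialgebraic set $S_1(x)$, vanishing simultaneously exactly on the closed subset $S_1(x) \cap S_2(x)$. The classical (non-parametric) \L{}ojasiewicz inequality therefore supplies constants $C(x), K(x) > 0$ with $\sigma_x(y) \geq C(x)\, \rho_x(y)^{K(x)}$ for all $y \in S_1(x)$. Because $C(x)$ is allowed to depend on $x$ in the statement, the entire burden of the theorem lies in showing that $K(x)$ can be chosen \emph{uniformly in $x$}.

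Set
\[
\kappa(x) := \inf\{K > 0 \,:\, \exists\, C > 0,\ \sigma_x(y) \geq C \rho_x(y)^K \text{ for all } y \in S_1(x)\}
\]
on $E$. By Tarski--Seidenberg quantifier elimination (Section~\ref{sec2.5}), $\kappa$ is a semialgebraic function on the semialgebraic set $E$, and the previous paragraph shows $\kappa(x) < \infty$ everywhere on $E$. The theorem is therefore equivalent to the assertion $\sup_{x \in E} \kappa(x) < \infty$.

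To bound $\kappa$ uniformly, I would apply a cylindrical algebraic decomposition of $\mathbb{R}^n \times \mathbb{R}^p$ compatible with $S_1$, $S_2$, and $S_1 \cap S_2$, together with Hardt's semialgebraic triviality theorem, to obtain a finite partition of $E$ into semialgebraic pieces $E_1, \dots, E_r$ over each of which the pair $(S_1(x), S_2(x))$ has a single semialgebraic homeomorphism type. On each $E_j$, pulling back through the trivialization reduces the analysis of $\sigma_x$ and $\rho_x$ to a single fixed pair of semialgebraic functions on a fixed model fiber, and a single classical \L{}ojasiewicz exponent $K_j$ suffices uniformly on $E_j$. Setting $K := \max_j K_j$ then serves as the desired uniform exponent, with any remaining semialgebraic distortion absorbed into $C(x)$.

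The principal obstacle is that Hardt's trivialization is only a semialgebraic homeomorphism, not an isometry, so it distorts the two distance functions by $x$-dependent semialgebraic factors. These factors enter only multiplicatively, however, and are exactly what $C(x)$ is designed to absorb; the integer exponent $K_j$ coming from the model fiber is unaffected by the distortion. The technical heart of the argument is then a Puiseux-type analysis on each cell that identifies $K_j$ as a fixed rational exponent common to all $x \in E_j$; finiteness of the partition and of the resulting collection of rational exponents follows from the semialgebraic nature of the whole setup, producing the global $K$.
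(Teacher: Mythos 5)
The paper does not prove this theorem; it is Theorem 2 of the paper, cited directly from \L{}ojasiewicz--Wachta with the remark that the semialgebraic case suffices for their purposes. So you are attempting a proof the authors treat as a black box, and the review below concerns only the internal soundness of your argument.

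Your first paragraph is correct: the fiberwise \L{}ojasiewicz inequality gives $\sigma_x \geq C(x)\rho_x^{K(x)}$, and the whole content of the theorem is the uniformity of the exponent. But the rest of the sketch has two genuine gaps.

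First, the claim that $\kappa$ is semialgebraic ``by Tarski--Seidenberg'' does not hold up: the defining condition involves $\rho_x(y)^K$ with $K$ a free variable, and the function $(a,K)\mapsto a^K$ is not definable in the first-order theory of real closed fields, so the set $\{(x,K,C):\forall y\in S_1(x),\ \sigma_x(y)\geq C\rho_x(y)^K\}$ is not a priori semialgebraic. (For fixed integer $K$ the inner condition is semialgebraic, which is a different thing.) Moreover, even if $\kappa$ were semialgebraic, that would not give boundedness: a semialgebraic function on a compact set can be finite-valued yet unbounded, e.g.\ $\kappa(x)=1/x$ on $(0,1]$ extended by $\kappa(0)=0$. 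So this step neither is justified nor, even if granted, closes the argument.

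Second, and more seriously, the Hardt trivialization step relies on the assertion that the metric distortion of the trivializing homeomorphism ``enters only multiplicatively,'' i.e.\ that $h_x$ is bi-Lipschitz with $x$-dependent constants. Semialgebraic (or even cell-preserving) homeomorphisms are not bi-Lipschitz in general --- $t\mapsto t^3$ already fails --- and a H\"older-type distortion \emph{does} change a \L{}ojasiewicz exponent. What the trivialization normalizes is the semialgebraic topological type of the fibers, not their metric geometry, and \L{}ojasiewicz exponents are metric invariants. So reducing to ``a single fixed pair of semialgebraic functions on a fixed model fiber'' is exactly the step that loses the information you need.

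The repair that preserves your strategy is not to compare $\sigma_x,\rho_x$ to the model fiber's own distance functions, but to apply \L{}ojasiewicz directly to the pulled-back family $\tilde\sigma(x,u):=\sigma_x(h_x(u))$, $\tilde\rho(x,u):=\rho_x(h_x(u))$ on each trivial cell $T_j\times F_j$; these are jointly semialgebraic, continuous, and share the same zero set, so a single exponent works on the cell. This still requires you to deal with the fact that the Hardt cells $T_j$ need not be compact (so $T_j\times F_j$ isn't compact, and the trivialization may not extend to the closure), e.g.\ by an induction on $\dim E$ or a suitable compactification. In any case, as written the argument does not establish the uniform exponent $K$, and the two claims flagged above need to be replaced, not just elaborated.
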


Remarks: The result of \cite{wachta} applies to subanalytic sets. We need
only the semialgebraic case. Our notation differs from that of \cite{wachta}.

We will apply Theorem \ref{wachta} to prove the following result.

\begin{lemma}[Growth Lemma]
\label{growthlemma}Let $E\subset \mathbb{R}^{n_{1}}$ and $E^{+}\subset
E\times \mathbb{R}^{n_{2}}$ be compact and semialgebraic, with $\dim E^+
\geq 1$. Let $A$ be a semialgebraic function on $E^{+}$. Then there exist an
integer $K\geq 1$, a semialgebraic function $A_{1}$ on $E$, and a compact
semialgebraic set $\underline{E}^{+}\subset E^{+}$, with the following
properties.

\begin{description}
\item[(GL1)] $\dim \underline{E}^{+}<\dim E^{+}$.

For $x\in E$, set $E^{+}\left( x\right) =\left\{ y\in \mathbb{R}%
^{n_{2}}:\left( x,y\right) \in E^{+}\right\} $ and $\underline{E}^{+}\left(
x\right) =\left\{ y\in \mathbb{R}^{n_{2}}:\left( x,y\right) \in \underline{E}%
^{+}\right\} $. Then, for each $x\in E$, the following hold.

\item[(GL2)] If $\underline{E}^{+}\left( x\right) $ is empty, then 
\begin{equation*}
\left\vert A\left( x,y\right) \right\vert \leq A_{1}\left( x\right) \text{
for all }y\in E^{+}\left( x\right) .
\end{equation*}

\item[(GL3)] If $\underline{E}^{+}\left( x\right) $ is non-empty, then 
\begin{equation*}
\left\vert A\left( x,y\right) \right\vert \leq A_{1}\left( x\right) \cdot 
\left[ \dist\left( y,\underline{E}^{+}\left( x\right) \right) \right] ^{-K}%
\text{ for all }y\in E^{+}\left( x\right) \setminus \underline{E}^{+}\left(
x\right) .
\end{equation*}
\end{description}
\end{lemma}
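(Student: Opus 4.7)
The plan is to define $\underline{E}^+$ as the ``blow-up locus'' of $A$, apply the \L ojasiewicz--Wachta Theorem \ref{wachta}, and then repackage the resulting pointwise estimate as a semialgebraic bound. To prepare, introduce the bounded semialgebraic function $\phi(x,y) = (1+A(x,y)^2)^{-1/2}$ on $E^+$ (taking values in $(0,1]$) and let $\Gamma \subset \mathbb{R}^{n_1} \times \mathbb{R}^{n_2+1}$ be its graph. Since $E^+$ is compact, $S_1 = \overline{\Gamma}$ is compact semialgebraic. Set
\[
\underline{E}^+ = \{(x,y) \in E^+ : (x,y,0) \in S_1\},
\]
which is compact semialgebraic as the projection of the compact set $S_1 \cap (E^+ \times \{0\})$. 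Unwinding the closure, $(x,y) \in \underline{E}^+$ precisely when there is a sequence $(x_k,y_k) \in E^+$ with $(x_k,y_k) \to (x,y)$ and $|A(x_k,y_k)| \to \infty$.

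Because $\phi > 0$ on $\Gamma$, the inclusion $\underline{E}^+ \times \{0\} \subset S_1 \setminus \Gamma$ holds. The graph of a semialgebraic function satisfies $\dim \Gamma = \dim E^+$, while the standard semialgebraic frontier theorem gives $\dim(S_1 \setminus \Gamma) < \dim \Gamma$; combining these yields $\dim \underline{E}^+ < \dim E^+$, which is (GL1). Now let $S_2 = E^+ \times \{0\} \subset \mathbb{R}^{n_1} \times \mathbb{R}^{n_2+1}$, so that $S_1 \cap S_2 = \underline{E}^+ \times \{0\}$ and $S_1(x) \cap S_2(x) = \underline{E}^+(x) \times \{0\}$. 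Theorem \ref{wachta} produces a single integer $K \geq 1$ such that for every $x$ with $\underline{E}^+(x) \neq \emptyset$ there is $C(x) > 0$ with
\[
\dist\bigl((y,t), S_2(x)\bigr) \geq C(x) \cdot \dist\bigl((y,t), \underline{E}^+(x) \times \{0\}\bigr)^K \quad \text{for all } (y,t) \in S_1(x).
\]
Specializing to $(y,t) = (y,\phi(x,y))$ with $y \in E^+(x) \setminus \underline{E}^+(x)$, the left side equals $\phi(x,y)$ (since $y \in E^+(x)$), while the right side dominates $C(x) \cdot \dist(y,\underline{E}^+(x))^K$. Inverting,
\[
|A(x,y)| \leq \sqrt{1+A(x,y)^2} \leq C(x)^{-1} \dist(y,\underline{E}^+(x))^{-K}.
\]

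To produce the required $A_1$, simply define it as a supremum and let quantifier elimination do the rest:
\[
A_1(x) = \begin{cases} 1 + \sup\{|A(x,y)| : y \in E^+(x)\}, & \underline{E}^+(x) = \emptyset, \\ 1 + \sup\{|A(x,y)| \cdot \dist(y,\underline{E}^+(x))^K : y \in E^+(x) \setminus \underline{E}^+(x)\}, & \underline{E}^+(x) \neq \emptyset. \end{cases}
\]
Both branches are first-order definable, so $A_1$ is semialgebraic on $E$; finiteness in the second branch comes directly from the estimate above, and in the first branch from a compactness argument (any unbounded sequence $y_k \in E^+(x)$ with $|A(x,y_k)| \to \infty$ would, by compactness of $E^+(x)$, have a subsequential limit $y^*$ forcing $(x,y^*) \in \underline{E}^+$, contradicting emptiness). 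Properties (GL2) and (GL3) are then immediate. The main obstacle is the first paragraph: correctly identifying $\underline{E}^+$ (via the closure of the graph of $\phi$) so that it both captures the blow-up of $A$ and fits as $S_1 \cap S_2$ in Theorem \ref{wachta}; once this setup is in place the rest is bookkeeping.
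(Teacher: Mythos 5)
Your proof is correct and follows essentially the same route as the paper: form a bounded positive semialgebraic function vanishing exactly at the blow-up locus (you use $\phi=(1+A^2)^{-1/2}$; the paper uses $1/(1+|A|)$), take the closure of its graph to identify $\underline{E}^+$ and get the dimension drop, and then invoke Theorem~\ref{wachta} with $S_1=\overline{\Gamma}$ and $S_2=E^+\times\{0\}$ to get the uniform exponent $K$ and the pointwise bound, finally defining $A_1$ by a (first-order definable, hence semialgebraic) supremum.
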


\begin{proof}
By replacing $A(x,y)$ by $1+|A(x,y)|$, we may suppose that $A(x,y) \geq 1$
for all $(x,y) \in E^+$.

We define semialgebraic sets

\begin{itemize}
\item $S_0=\left\{ (x,y, \frac{1}{A(x,y)}) \in \mathbb{R}^{n_1} \times 
\mathbb{R}^{n_2} \times \mathbb{R}: (x,y) \in E^+ \right\}$.

\item $S_1=$ closure of $S_0$ in $\mathbb{R}^{n_1} \times \mathbb{R}^{n_2}
\times \mathbb{R}$.

\item $S_2=E^+\times\{0\} \subset \mathbb{R}^{n_1} \times\mathbb{R}^{n_2}
\times\mathbb{R}$.
\end{itemize}

In particular, $S_1, S_2$ are compact semialgebraic subsets of $\mathbb{R}%
^{n_1} \times \mathbb{R}^{n_2}\times \mathbb{R}$, so Theorem \ref{wachta} applies.

Observe that $S_1 \cap S_2$ has the form $\underline{E}^+ \times\{0 \}$ for
a compact semialgebraic set $\underline{E}^+ \subset E^+$. Moreover, $S_1
\cap S_2 \subset S_0^{\text{closure}} \setminus S_0$, hence $\dim \underline{%
E}^+ = \dim S_1 \cap S_2 \leq \dim (S_0^{\text{closure}} \setminus S_0) <
\dim S_0 = \dim E^+$.

Thus, $\dim \underline{E}^+ < \dim E^+$.

Now let $x \in E$ be given. We write $\underline{E}^+(x) = \left\{y \in 
\mathbb{R}^{n_2}: (x,y) \in \underline{E}^+ \right\}$.

\underline{Case 1}: Suppose $\underline{E}^+(x) = \emptyset$. Then $S_1(x)
\cap S_2(x)= \emptyset$.

Therefore, $S_0(x)$ avoids a neighborhood of $S_2(x)$ in $\mathbb{R}^{n_1}
\times \mathbb{R}^{n_2} \times \mathbb{R}$, which implies that $\frac{1}{%
A(x,y)}$ avoids a neighborhood of zero as $y$ varies over $E^+(x)$.

Therefore, for some constant $B^x$, we have

\begin{itemize}
\item[\refstepcounter{equation}\text{(\theequation)}\label{wa2}] {$%
|A(x,y)|=A(x,y) \leq B^x$ for all $y \in E^+(x)$.}
\end{itemize}

Estimate \eqref{wa2} holds if $\underline{E}^+(x) =\emptyset$.

\underline{Case 2}: Suppose $\underline{E}^+(x) \not=\emptyset$. Then $%
S_1(x) \cap S_2(x) \not=\emptyset$.

Let $K$ and $C(x)$ be as in Theorem \ref{wachta}. Then, for any $y \in E^+(x)
$, we have $(y,\frac{1}{A(x,y)}) \in S_1(x)$ and $(y,0) \in S_2(x)$,
therefore

\begin{eqnarray*}
\frac{1}{A(x,y)} &=& \dist\left((y,\frac{1}{A(x,y)}),(y,0)\right) \geq \dist%
\left((y,\frac{1}{A(x,y)}),S_2(x)\right) \\
&\geq& C(x) \cdot \left[ \dist\left((y, \frac{1}{A(x,y)}),S_1(x) \cap
S_2(x)\right) \right]^K \\
&=& C(x) \cdot \left[ \dist\left((y, \frac{1}{A(x,y)}), \underline{E}^+(x)
\times\{0\}\right) \right]^K \\
&\geq& C(x) \cdot \left[ \dist\left(y, \underline{E}^+(x) \right) \right]^K 
\text{ with } C(x) >0.
\end{eqnarray*}
Thus, for some constant $B^x$, we have 
\begin{itemize}
\item[\refstepcounter{equation}\text{(\theequation)}\label{wa3}] {$%
|A(x,y)|=A(x,y) \leq \frac{B^x}{\left[\dist\left(y, \underline{E}%
^+(x)\right) \right]^K}$ for all $y \in E^+(x) \setminus \underline{E}^+(x)$.%
}
\end{itemize}

Estimate \eqref{wa3} holds if $\underline{E}^+(x) \not=\emptyset$.

For $x \in E$, we now set 
\begin{equation*}
A_1(x) =\left. 
\begin{cases}
\text{smallest } B^+ \geq 0 \text{ for which } \eqref{wa2} \text{ holds, if }%
\underline{E}^+(x) = \emptyset \\ 
\text{smallest }B^+ \geq 0 \text{ for which }\eqref{wa3} \text{ holds, if } 
\underline{E}^+(x) \not=\emptyset%
\end{cases}
\right\}. 
\end{equation*}

Then $A_1(x)$ is a non-negative semialgebraic function, and we have 
\begin{eqnarray*}
|A(x,y)| &\leq& A_1(x) \text{ if } y \in E^+(x_0) \text{ and } \underline{E}%
^+(x_0) = \emptyset, \\
|A(x,y)| &\leq& \frac{A_1(x)}{\left[\dist\left(y, \underline{E}^+(x)\right) %
\right]^K} \text{ if } y \in E^+(x_0) \setminus \underline{E}^+(x_0) \text{
and } \underline{E}^+(x_0) \not=\emptyset.
\end{eqnarray*}

The proof of Lemma \ref{growthlemma} is complete.
\end{proof}

We thank M. Aschenbrenner and W. Paw\l ucki for pointing out that Lemma \ref%
{growthlemma} follows easily from known results, thus subtracting
approximately 20 pages from this paper. Paw\l ucki supplied the above proof
of Lemma \ref{growthlemma} based on \cite{wachta}.

We indicate how $\underline{E}^+,K, A_1$ in Lemma \ref{growthlemma} can be computed. The delicate  
point is the computation of $K$.

Proceeding as in the proof of Lemma \ref{growthlemma}, we first replace $A$ by  
$1+|A|$, then compute $\underline{E}^+$.

Given any positive integer $K$, we can then decide whether the following  
hold for each $x \in E$.

\LAQ{2.5.3}{$\sup\{A(x,y):y \in E^+(x)\}$ finite if $\underline{E}^+(x)$ is empty.}
\LAQ{2.5.4}{$\sup \{A(x,y)\cdot [\dist(y, \underline{E}^+(x))]^K: y \in E^+(x)\}$ finite if  
$\underline{E}^+(x)$ is nonempty.}

We successively test $K=1,2,3,\cdots$ until we find a $K$ for which \eqref{2.5.3}  
and \eqref{2.5.4} hold. We will eventually find such a $K$, thanks to the  
proof of Lemma \ref{growthlemma}.

Once we have found $\underline{E}^+$ and $K$, we can compute the function $A_1$ defined  
in the proof of Lemma \ref{growthlemma}. Thus, we compute $\underline{E}^+,K, A_1$ as  promised.

\section{Semialgebraic Quadratic Forms and $C^{\infty }$ Functions}\label{sec3}

\subsection{Setup\label{section-setup}}

\label{quadratic-form-set-up-section}

We are given the following objects and assumptions:

\begin{itemize}
\item[\refstepcounter{equation}\text{(\theequation)}\label{PSQF1}] $E\subset 
\mathbb{R}^{n}$, $E^{+}\subset E\times \underset{\text{k copies}}{%
\underbrace{\mathbb{R}^{n}\times \cdots \times \mathbb{R}^{n}}}$ , $%
E^{++}\subset E^{+}\times \underset{L\text{ copies}}{\underbrace{\mathbb{R}%
^{n}\times \cdots \times \mathbb{R}^{n}}}$.

\item[\refstepcounter{equation}\text{(\theequation)}\label{PSQF2}] $%
E,E^{+},E^{++}$ are semialgebraic.

\item[\refstepcounter{equation}\text{(\theequation)}\label{PSQF3}] $E$ and $%
E^{+}$ are compact. We do not assume $E^{++}$ compact.

\item[\refstepcounter{equation}\text{(\theequation)}\label{PSQF4}] $\bar{m}%
\geq m \geq 0 $ integers. $D,I$ are positive integers.

\item[\refstepcounter{equation}\text{(\theequation)}\label{PSQF5}] $Q\left(
x_{0},P_{0},x_{1},P_{1},\cdots ,x_{k,}P_{k},z_{1},\cdots ,z_{L}\right) $, a
semialgebraic function of $\left( x_{0},x_{1},\cdots ,x_{k},z_{1},\cdots
,z_{L}\right) \in E^{++}$, $P_{0}\in \mathcal{P}^{\left( m\right) }\left( 
\mathbb{R}^{n},\mathbb{R}^{D}\right) $, $P_{1},\cdots ,P_{k}\in \mathcal{P}%
^{\left( \bar{m}\right) }\left( \mathbb{R}^{n},\mathbb{R}^{I}\right) $.

\item[\refstepcounter{equation}\text{(\theequation)}\label{PSQF6}] For fixed 
$\left( x_{0},x_{1},\cdots ,x_{k},z_{1},\cdots ,z_{L}\right) \in E^{++}$,
the map 
\begin{equation*}
\left( P_{0},P_{1},\cdots ,P_{k}\right) \mapsto Q\left(
x_{0},P_{0},x_{1},P_{1},\cdots ,x_{k,}P_{k},z_{1},\cdots ,z_{L}\right)
\end{equation*}
is a positive semidefinite quadratic form.

\item[\refstepcounter{equation}\text{(\theequation)}\label{PSQF7}] $A\left(
x_{0},\cdots ,x_{k}\right) $ is a nonnegative semialgebraic function on $%
E^{+}$.

\item[\refstepcounter{equation}\text{(\theequation)}\label{PSQF8}] 
\begin{equation*}
Q\left( x_{0},P_{0},x_{1},P_{1},\cdots ,x_{k,}P_{k},z_{1},\cdots
,z_{L}\right) \leq A\left( x_{0},\cdots ,x_{k}\right) \cdot \left[
\sum_{|\alpha |\leq m}|\partial ^{\alpha
}P_{0}(x_{0})|^{2}+\sum_{i=1}^{k}\sum_{\left\vert \alpha \right\vert \leq 
\bar{m}}\left\vert \partial ^{\alpha }P_{i}\left( x_{i}\right) \right\vert
^{2}\right]
\end{equation*}%
for all $\left( x_{0},\cdots ,x_{k},z_{1},\cdots ,z_{L}\right) \in E^{++}$, $%
P_{0}\in \mathcal{P}^{\left( m\right) }\left( \mathbb{R}^{n},\mathbb{R}%
^{D}\right) $, $P_{1},\cdots ,P_{k}\in \mathcal{P}^{\left( \bar{m}\right)
}\left( \mathbb{R}^{n},\mathbb{R}^{I}\right) $.
\end{itemize}

For $x_0 \in E$, we define

\begin{equation*}
E^+(x_0)=\{(x_1, \cdots, x_k) \in \mathbb{R}^n \times \cdots \times \mathbb{R%
}^n: (x_0, x_1, \cdots, x_k) \in E^+\}
\end{equation*}
and 
\begin{equation*}
E^{++}(x_0)=\{(x_1, \cdots, x_k,z_1, \cdots, z_L) \in \mathbb{R}^n \times
\cdots \times \mathbb{R}^n: (x_0, x_1, \cdots, x_k,z_1,\cdots, z_L) \in
E^{++}\}.
\end{equation*}

In \eqref{PSQF1}, \eqref{PSQF5}, \eqref{PSQF6}, and \eqref{PSQF8}, we allow $%
L=0$. That is, \eqref{PSQF5}, \eqref{PSQF6}, and \eqref{PSQF8} need not
involve $z$'s.

\subsection{Statement of main propositions on semialgebraic quadratic forms}

\label{statement-quadform}

Under the assumptions of Section \ref{section-setup}, we have the following
results:

\begin{proposition}
\label{SAQF1}There exist an integer $\bar{\bar{m}}\geq \bar{m}$ and a family
of vector spaces 
\begin{equation*}
H^{\text{bdd}}\left( x_{0},\cdots ,x_{k}\right) \subset \mathcal{P}^{\left(
m\right) }\left( \mathbb{R}^{n},\mathbb{R}^{D}\right) \oplus \underset{\text{%
k copies}}{\underbrace{\mathcal{P}^{\left( \bar{\bar{m}}\right) }\left( 
\mathbb{R}^{n},\mathbb{R}^{I}\right) \oplus \cdots \oplus \mathcal{P}%
^{\left( \bar{\bar{m}}\right) }\left( \mathbb{R}^{n},\mathbb{R}^{I}\right) }}
\end{equation*}%
depending semialgebraically on $\left( x_{0},x_{1},\cdots ,x_{k}\right) \in
E^{+}$, such that the following holds. \newline
Let $x_{0}\in E$, $P_{0}\in \mathcal{P}^{\left( m\right) }\left( \mathbb{R}%
^{n},\mathbb{R}^{D}\right) $, $F\in C^{\infty }_0\left( \mathbb{R}^{n},%
\mathbb{R}^{I}\right) $. Then 
\begin{equation}
\sup \left\{ 
\begin{array}{c}
Q\left( x_{0},P_{0},y_{1},J_{y_{1}}^{\left( \bar{m}\right) }F,\cdots
,y_{k,}J_{y_{k}}^{\left( \bar{m}\right) }F,z_{1},\cdots ,z_{L}\right) : \\ 
\left( y_{1},\cdots ,y_{k},z_{1},\cdots ,z_{L}\right) \in E^{++}(x_{0})%
\end{array}%
\right\} <\infty  \label{PSQFx1}
\end{equation}%
if and only if 
\begin{equation}
\left( P_{0},J_{y_{1}}^{\left( \bar{\bar{m}}\right) }F,\cdots
,J_{y_{k}}^{\left( \bar{\bar{m}}\right) }F\right) \in H^{\text{bdd}}\left(
x_{0},y_{1},\cdots ,y_{k}\right)  \label{PSQFx2}
\end{equation}%
for each $\left( y_{1},\cdots ,y_{k}\right) \in E^{+}(x_{0})$.

Moreover, the above family of vector spaces $H^{\text{bdd}}$ can be computed from  
the data provided in Section \ref{section-setup}.
\end{proposition}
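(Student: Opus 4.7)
The plan is to proceed by induction on $d := \dim E^{+}$, combining the Growth Lemma (Lemma \ref{growthlemma}) with Taylor's theorem (Proposition \ref{taylor'sthm}) and the positive semidefiniteness of $Q$. In the base case $d \leq 0$, $E^{+}$ is finite, so $A$ attains only finitely many values on $E^{+}$; by \eqref{PSQF8} together with the boundedness of the jets of $F \in C^{\infty}_{0}$, the supremum in \eqref{PSQFx1} is automatically finite for every $P_{0}$ and every $F$, so we may take $\bar{\bar{m}} = \bar{m}$ and let $H^{\text{bdd}}$ equal the full ambient space at every point.

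For the inductive step, apply the Growth Lemma to $A$ on $E^{+}$, producing a compact semialgebraic $\underline{E}^{+} \subset E^{+}$ with $\dim \underline{E}^{+} < d$, an integer $K$, and a semialgebraic function $A_{1}$ on $E$ satisfying (GL2)--(GL3). Fix $x_{0} \in E$. If $\underline{E}^{+}(x_{0}) = \emptyset$, then $A(x_{0}, \cdot) \leq A_{1}(x_{0})$ on $E^{+}(x_{0})$, and \eqref{PSQFx1} holds automatically for any $F \in C^{\infty}_{0}$; no condition is imposed at $x_{0}$. Otherwise, for $(y_{1}, \ldots, y_{k}) \in E^{+}(x_{0})$ let $(y^{*}_{1}, \ldots, y^{*}_{k}) \in \underline{E}^{+}(x_{0})$ be a semialgebraically chosen nearest point, set $\delta := |(y_{1}, \ldots, y_{k}) - (y^{*}_{1}, \ldots, y^{*}_{k})|$, and decompose
\begin{equation*}
J^{(\bar{m})}_{y_{i}} F \;=\; \pi^{\bar{\bar{m}} \to \bar{m}}_{y_{i}} J^{(\bar{\bar{m}})}_{y^{*}_{i}} F \;+\; R_{i},
\end{equation*}
with $|\partial^{\alpha} R_{i}(y_{i})| \leq C \|F\|_{C^{\bar{\bar{m}}}} \delta^{\bar{\bar{m}} - |\alpha|}$ by Proposition \ref{taylor'sthm}. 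Using $Q(u+v) \leq 2Q(u) + 2Q(v)$ together with \eqref{PSQF8} and (GL3),
\begin{equation*}
Q(x_{0}, 0, y_{1}, R_{1}, \ldots, y_{k}, R_{k}, z_{1}, \ldots, z_{L}) \;\leq\; C\, A_{1}(x_{0})\, \|F\|_{C^{\bar{\bar{m}}}}^{2} \cdot \delta^{2(\bar{\bar{m}} - \bar{m}) - K},
\end{equation*}
so the choice $\bar{\bar{m}} \geq \bar{m} + \lceil K/2 \rceil$ makes the remainder contribution uniformly bounded on $E^{+}(x_{0})$ for every $F \in C^{\infty}_{0}$.

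Consequently the original supremum is finite if and only if the supremum of the Taylor-substituted form
\begin{equation*}
\tilde{Q} \;:=\; Q\bigl(x_{0}, P_{0}, y_{1}, \pi^{\bar{\bar{m}} \to \bar{m}}_{y_{1}} P^{*}_{1}, \ldots, y_{k}, \pi^{\bar{\bar{m}} \to \bar{m}}_{y_{k}} P^{*}_{k}, z_{1}, \ldots, z_{L}\bigr)
\end{equation*}
is finite when $P^{*}_{i} = J^{(\bar{\bar{m}})}_{y^{*}_{i}} F$ and $(y^{*}_{1}, \ldots, y^{*}_{k})$ ranges over $\underline{E}^{+}(x_{0})$. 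Regarded as a quadratic form in $(P_{0}, P^{*}_{1}, \ldots, P^{*}_{k})$, $\tilde{Q}$ is again semialgebraic and positive semidefinite; its new $E^{+}$ is $\underline{E}^{+}$ (of strictly smaller dimension), its new $E^{++}$ absorbs $(y_{1}, \ldots, y_{k}, z_{1}, \ldots, z_{L})$ --- together with the semialgebraic nearest-point relation --- into its list of $z$-parameters, and its new bound $A_{\text{new}}$ on $\underline{E}^{+}$ is manufactured from $A(x_{0}, y_{1}, \ldots, y_{k})$ together with the polynomial factors comparing $\|\pi^{\bar{\bar{m}} \to \bar{m}}_{y_{i}} P^{*}_{i}\|_{y_{i}}$ to $\|P^{*}_{i}\|_{y^{*}_{i}}$. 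The induction hypothesis then yields $\bar{\bar{m}}_{\text{new}} \geq \bar{\bar{m}}$ and a semialgebraic family $H^{\text{bdd}}_{\text{new}}$ on $\underline{E}^{+}$. Setting $H^{\text{bdd}}(x_{0}, y_{1}, \ldots, y_{k})$ equal to the full ambient space when $(y_{1}, \ldots, y_{k}) \notin \underline{E}^{+}(x_{0})$ and equal to $H^{\text{bdd}}_{\text{new}}$ otherwise --- patched via the semialgebraic indicator of $\underline{E}^{+}$ --- completes the construction, and computability throughout follows from the algorithms of Section \ref{sec2.5} applied to each step. The main obstacle is verifying that $A_{\text{new}}$ can indeed be realized as a \emph{finite} semialgebraic function on $\underline{E}^{+}$: this is precisely what forces the framework of Section \ref{section-setup} to carry auxiliary $z$-parameters, so that quantities formerly playing the role of jet-evaluation points can be absorbed into the $z$-list at the next level of recursion without leaving the class of setups covered by the proposition.
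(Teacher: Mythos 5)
Your skeleton matches the paper's: induct on $\dim E^{+}$, base case trivially via finiteness of $E^{+}$; in the induction step apply the Growth Lemma to $A$, approximate $J^{(\bar m)}_{y_i}F$ by $\pi^{\bar{\bar m}\to\bar m}_{y_i}J^{(\bar{\bar m})}_{\underline{x}_i}F$ at a nearest point of $\underline{E}^{+}(x_0)$, choose $\bar{\bar m}$ large so the Taylor gain $\delta^{2(\bar{\bar m}-\bar m)}$ beats the loss $\delta^{-K}$, and pass to a substituted form on $\underline{E}^{+}$ with $(y_1,\ldots,y_k,z_1,\ldots,z_L)$ absorbed into the $z$-list.

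However, there is a concrete gap at the hand-off to the inductive hypothesis, and you have misdiagnosed it. You write that the new bound $A_{\text{new}}$ on $\underline{E}^{+}$ is ``manufactured from $A(x_0,y_1,\ldots,y_k)$ together with the polynomial factors comparing $\|\pi^{\bar{\bar m}\to\bar m}_{y_i}P^{*}_i\|_{y_i}$ to $\|P^{*}_i\|_{\underline{x}_i}$,'' and you attribute the remaining difficulty to the $z$-parameter bookkeeping. This does not work: the operator-norm supremum of $\tilde Q(x_0,P_0,\underline{x}_1,P^{*}_1,\ldots)$ over all admissible $(y_1,\ldots,y_k,z_1,\ldots,z_L)$ is in general \emph{infinite}, because as $(y_1,\ldots,y_k)\to(\underline{x}_1,\ldots,\underline{x}_k)$ (i.e.\ $\delta\to 0$) the Growth Lemma factor $A(x_0,y_1,\ldots,y_k)\sim A_1(x_0)\,\delta^{-K}$ blows up while the jet-comparison polynomial factor stays bounded. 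The Taylor gain $\delta^{\bar{\bar m}-\bar m}$ kicks in only for the \emph{error} term $R_i$, not for arbitrary $P^{*}_i$; for generic $(P_0,P^{*}_1,\ldots,P^{*}_k)$ there is no finite $A_{\text{new}}$ verifying the analogue of \eqref{PSQF8}. So your ``$\tilde Q$'' is not admissible data for the induction.

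The paper's fix, which your proposal is missing, is to first define $\hat H^{\text{bdd}}(x_0,\underline{x}_1,\ldots,\underline{x}_k)$ as the vector space of tuples $(P_0,P_1,\ldots,P_k)$ for which the supremum in \eqref{P25} is finite; this is a semialgebraic family of subspaces. One then introduces the orthogonal projection $\Pi_{(x_0,\underline{x}_1,\ldots,\underline{x}_k)}$ onto $\hat H^{\text{bdd}}$, defines $\hat Q$ by pre-composing $\tilde Q$ with $\Pi$, and takes $\hat A=M^2$ where $M$ is the (finite, semialgebraic) operator-norm constant of the restriction of $\tilde Q$ to $\hat H^{\text{bdd}}$. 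Only then does \eqref{PSQF8} hold for $(\hat E,\hat E^{+},\hat E^{++},\hat Q,\hat A)$, licensing the inductive call. Membership in $\hat H^{\text{bdd}}$ is a genuine extra necessary condition on $(P_0,J^{(\bar{\bar m})}_{\underline{x}_1}F,\ldots)$ and must appear in the final characterization together with the space returned by the inductive hypothesis and with the condition $\underline H^{\text{bdd}}$ obtained from restricting $Q$ itself to $\underline E^{++}$ (to handle points with $(x_1,\ldots,x_k)\in\underline E^{+}(x_0)$); compare (P58). Without the projection and the attendant three-fold intersection, the argument fails before the inductive hypothesis can be invoked.
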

\begin{proposition}
\label{SAQF2}There exist $m^{+}\geq \bar{m}$ and a family of vector
subspaces 
\begin{equation*}
H^{\lim }\left( x_{0}\right) \subset \mathcal{P}^{\left( m\right) }\left( 
\mathbb{R}^{n},\mathbb{R}^{D}\right) \oplus \mathcal{P}^{\left( m^{+}\right)
}\left( \mathbb{R}^{n},\mathbb{R}^{I}\right) ,
\end{equation*}%
depending semialgebraically on $x_{0}\in E$, such that the following holds. 
\newline
Let $x_{0}\in E$, $P_{0}\in \mathcal{P}^{\left( m\right) }\left( \mathbb{R}%
^{n},\mathbb{R}^{D}\right) $, $F\in C^{\infty }_0 \left( \mathbb{R}^{n},%
\mathbb{R}^{I}\right) $. Assume that \eqref{PSQFx1} holds. Then%
\begin{equation}
\lim_{\substack{ y_{1},\cdots ,y_{k},z_{1},\cdots ,z_{L}\rightarrow x_{0} 
\\ \left( y_{1},\cdots ,y_{k},z_{1},\cdots ,z_{L}\right) \in E^{++}(x_{0})}}%
Q\left( x_{0},P_{0},y_{1},J_{y_{1}}^{\left( \bar{m}\right) }F,\cdots
,y_{k},J_{y_{k}}^{\left( \bar{m}\right) }F,z_{1},\cdots ,z_{L}\right) =0
\label{PSQFxx1}
\end{equation}%
if and only if 
\begin{equation}
\left( P_{0},J_{x_{0}}^{\left( m^{+}\right) }F\right) \in H^{\lim }\left(
x_{0}\right) \text{.}  \label{PSQFxx2}
\end{equation}

Moreover, the above family of vector spaces $H^{\lim}$ can be computed from  
the data provided in Section \ref{section-setup}.
\end{proposition}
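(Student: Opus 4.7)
The plan is to use Taylor's theorem at $x_0$ to convert the limit condition \eqref{PSQFxx1} into a linear condition on a single high-order jet $J_{x_0}^{(m^+)} F$ together with $P_0$, and then identify the resulting condition as membership in a semialgebraic family of linear subspaces $H^{\lim}(x_0)$. The argument will proceed by induction on $\dim E^+$, with Lemma \ref{growthlemma} furnishing the dimension drop.

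For an integer $\bar{\bar m} \geq \bar m$ to be chosen, Proposition \ref{taylor'sthm} lets me write $J_{y_i}^{(\bar m)} F = \tilde P_i + R_i$, where $\tilde P_i := \pi_{y_i}^{\bar{\bar m} \to \bar m} J_{x_0}^{(\bar{\bar m})} F$ depends linearly on $J_{x_0}^{(\bar{\bar m})} F$ and semialgebraically on $y_i$, while $|\partial^\alpha R_i(y_i)| \leq C \|F\|_{C^{\bar{\bar m}}} |y_i - x_0|^{\bar{\bar m} - |\alpha|}$ for $|\alpha| \leq \bar m$. Setting $u := (P_0, \tilde P_1, \ldots, \tilde P_k)$ and $v := (0, R_1, \ldots, R_k)$, the decomposition $Q_{\mathrm{full}} := Q(x_0, P_0, y, J_y^{(\bar m)} F, z) = Q(u) + 2 B(u,v) + Q(v)$ (with $B$ the associated bilinear form), together with Cauchy--Schwarz, $|B(u,v)| \leq \sqrt{Q(u) Q(v)}$, and the bound \eqref{PSQF8}, $Q(v) \leq A(x_0, y) \sum_i \sum_{|\alpha| \leq \bar m} |\partial^\alpha R_i(y_i)|^2$, implies: provided $Q(v) \to 0$, hypothesis \eqref{PSQFx1} forces $Q(u)$ to stay bounded, and $Q_{\mathrm{full}} \to 0$ is equivalent to $Q(u) \to 0$. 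Here $Q(u)$ is a semialgebraic, quadratic-in-$(P_0, J_{x_0}^{(\bar{\bar m})} F)$ function of the parameters $(y,z)$.

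To ensure $Q(v) \to 0$, I apply Lemma \ref{growthlemma} to $A$ over $E^+$, producing a lower-dimensional semialgebraic set $\underline E^+ \subset E^+$ and an integer $K$ with $A(x_0, y) \leq A_1(x_0) \dist(y, \underline E^+(x_0))^{-K}$ off of $\underline E^+$. Picking $\bar{\bar m}$ so that $2(\bar{\bar m} - \bar m) > K$, the Taylor decay dominates along any sequence $y \to x_0$ for which $\dist(y, \underline E^+(x_0))$ stays comparable to $|y - x_0|$; on such sequences the condition $Q_{\mathrm{full}} \to 0$ cuts out a semialgebraic linear subspace of $(P_0, J_{x_0}^{(\bar{\bar m})} F)$. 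For the complementary sequences, which approach $\underline E^+(x_0)$ faster than they approach the diagonal, I parametrize each $y$ by a nearest point $\underline y \in \underline E^+(x_0)$, apply Taylor at $\underline y$ to again express $J_y^{(\bar m)} F$ in terms of a higher-order jet at $x_0$, and thereby reduce to a new instance of Section \ref{section-setup} in which $\underline E^+$ plays the role of $E^+$ and the displacements $y - \underline y$ together with the original $z$'s are absorbed as new auxiliary $z$-variables (permitted since the setup allows arbitrary $L$ and does not require $E^{++}$ compact). Because $\dim \underline E^+ < \dim E^+$, the inductive hypothesis supplies a semialgebraic linear subspace for this stratum; the desired $H^{\lim}(x_0)$ is then the intersection of the subspaces extracted at each level, with $m^+$ controlled by the total number of inductive steps (at most $\dim E^+$).

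The main obstacle is the bookkeeping in the inductive step: verifying that after the nearest-point reduction the new quadratic form is positive semidefinite, depends semialgebraically on its arguments, and satisfies a bound of the form \eqref{PSQF8} with a new semialgebraic majorant $A'$. Once those invariants are established, semialgebraicity and computability of $H^{\lim}$ follow by iteration, using the algorithms of Section \ref{sec2.5} together with the constructive form of the Growth Lemma from Section \ref{sec2.6}.
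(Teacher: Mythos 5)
Your outline captures the right high-level scheme — Taylor approximation to trade low-order jets at nearby points for a single high-order jet, Lemma~\ref{growthlemma} for the dimension drop, induction on $\dim E^+$ — and this is essentially the architecture of the paper's proof. However, there is a genuine gap at the point you identify only as ``the main obstacle.'' The issue is not bookkeeping: after the nearest-point Taylor reduction, the naive new quadratic form $\tilde Q(x_0,P_0,\underline x, P, z, x) := Q\bigl(x_0,P_0,x_1,\pi_{x_1}^{\bar{\bar m}\to\bar m}P_1,\ldots,x_k,\pi_{x_k}^{\bar{\bar m}\to\bar m}P_k,z\bigr)$ does \emph{not} admit a semialgebraic majorant of the form required by hypothesis~\eqref{PSQF8}. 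The only control we have on $Q$ is via $A(x_0,x_1,\ldots,x_k)$, which by the Growth Lemma satisfies $A \lesssim A_1(x_0)\,\dist(x,\underline E^+(x_0))^{-K}$ off $\underline E^+(x_0)$, and this blows up as $x$ approaches $\underline E^+(x_0)$ with $\underline x$ held fixed. Since the $x$-variables are absorbed as auxiliary ($z$-type) parameters in the new instance, no finite semialgebraic $\hat A(x_0,\underline x)$ can bound $\tilde Q$ by $\sum|\partial^\alpha P_0(x_0)|^2 + \sum_i\sum_\alpha|\partial^\alpha P_i(\underline x_i)|^2$ for all admissible $x$. The paper repairs this by first proving Proposition~\ref{SAQF1} to extract the subspace $\hat H^{\text{bdd}}(x_0,\underline x)$ of jet-tuples for which boundedness holds, then replacing $\tilde Q$ by $\hat Q$ obtained by precomposing with the orthogonal projection $\Pi_{(x_0,\underline x)}$ onto $\hat H^{\text{bdd}}(x_0,\underline x)$. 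On that subspace a finite seminorm bound \eqref{P28} holds with a semialgebraic constant $M(x_0,\underline x)$, and $\hat A := M^2$ supplies the needed majorant. Without some such device the inductive hypothesis simply does not apply, and you cannot ``establish the invariants'' as you hope.

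Two smaller points. First, you write ``apply Taylor at $\underline y$ to again express $J_y^{(\bar m)}F$ in terms of a higher-order jet at $x_0$'' — it should be a higher-order jet at $\underline y_i$, which is the whole point of getting the base points onto the lower-dimensional set $\underline E^+$; this appears to be a slip but is worth flagging. Second, your inductive reduction over the ``near'' stratum excludes the diagonal case $y \in \underline E^+(x_0)$, since after the nearest-point parametrization you have $y \neq \underline y$. Sequences that approach $x_0$ while staying inside $\underline E^+(x_0)$ require a separate, direct application of the inductive hypothesis to $Q$ restricted to $\underline E^{++}$ (this is \eqref{P70} in the paper, producing $\underline H^{\lim}(x_0)$), and that subspace must be intersected in as well. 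Also note that the paper does not need your far-vs.-near stratification of sequences: once $\underline E^+(x_0)$ is nonempty and contains $(x_0,\ldots,x_0)$, Taylor at the nearest point controls the error uniformly over all off-$\underline E^+(x_0)$ configurations, because the decay $\dist(\cdot,\underline E^+(x_0))^{2(\bar{\bar m}-\bar m)}$ beats the blowup $\dist(\cdot,\underline E^+(x_0))^{-K}$; the pure Taylor-at-$x_0$ argument is invoked only when $(x_0,\ldots,x_0)\notin\underline E^+(x_0)$ (including the empty case), where $\dist$ is bounded below near the diagonal. Your stratification would need an extra argument for sequences that oscillate between strata, which the paper's organization avoids.
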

Note: In Proposition \ref{SAQF1}, the $\sup $ (\ref{PSQFx1}) is taken to be $%
0$ if $E^{++}\left( x_{0}\right) $ is empty.

\section{Proof of Propositions \protect\ref{SAQF1} and \protect\ref{SAQF2}}

\label{proofofprops23}

\begin{proof}
We prove Propositions \ref{SAQF1} and {\ref{SAQF2}} by induction on $\dim
E^{+}$.

All the semialgebraic sets and functions arising in our proof of  
Propositions \ref{SAQF1} and \ref{SAQF2} will be computable by the methods of Sections \ref{sec2.5}  
and \ref{sec2.6}.

\underline{In the base case,} $\dim E^{+}=0$, i.e., $E^{+}$ is finite.

In that case, hypothesis \eqref{PSQF8} from Section \ref{section-setup}
asserts that for some constant $A$, we have 
\begin{equation*}
Q\left( x_{0},P_{0},x_{1},P_{1},\cdots ,x_{k,}P_{k},z_{1},\cdots
,z_{L}\right) \leq A\cdot \left[\sum_{|\alpha| \leq m } \vert
\partial^\alpha P_0 (x_0) \vert^2+ \sum_{i=1}^{k}\sum_{\left\vert \alpha
\right\vert \leq \bar{m}}\left\vert \partial ^{\alpha }P_{i}\left(
x_{i}\right) \right\vert ^{2}\right]
\end{equation*}%
for all $\left( x_{0},\cdots ,x_{k},z_{1},\cdots ,z_{L}\right) \in E^{++}$
and all $P_0, P_1, \cdots, P_k$.

Consequently \eqref{PSQFx1} in Proposition \ref{SAQF1} is satisfied, for any 
$F\in C_{0}^{\infty }\left( \mathbb{R}^{n},\mathbb{R}^{I}\right) $. We take $%
\bar{\bar{m}}=\bar{m}$ and 
\begin{equation*}
H^{\text{bdd}}\left( x_{0},y_{1},\cdots ,y_{k}\right) =\mathcal{P}^{\left(
m\right) }\left( \mathbb{R}^{n},\mathbb{R}^{D}\right) \oplus \underset{k%
\text{ copies}}{\underbrace{\mathcal{P}^{\left( \bar{m}\right) }\left( 
\mathbb{R}^{n},\mathbb{R}^{I}\right) \oplus \cdots \oplus \mathcal{P}%
^{\left( \bar{m}\right) }\left( \mathbb{R}^{n},\mathbb{R}^{I}\right) }}\text{%
.}
\end{equation*}%
Then \eqref{PSQFx1} and \eqref{PSQFx2} both hold for any $F\in C_{0}^{\infty
}\left( \mathbb{R}^{n},\mathbb{R}^{I}\right) $, proving Proposition \ref%
{SAQF1} in the base case $\dim E^{+}=0$.

To prove Proposition \ref{SAQF2} in the base case, we note that since $E^{+}$
is finite, condition (\ref{PSQFxx1}) in Proposition \ref{SAQF2} is
equivalent to the following:%
\begin{equation*}
\lim_{\substack{ z_{1},\cdots ,z_{L}\rightarrow x_{0}  \\ \left(
x_{0},x_{0},\cdots ,x_{0},z_{1},\cdots ,z_{L}\right) \in E^{++}}}Q\left(
x_{0},P_{0},x_{0},J_{x_{0}}^{\left( \bar{m}\right) }F,\cdots
,x_{0},J_{x_{0}}^{\left( \bar{m}\right) }F,z_{1},\cdots ,z_{L}\right) =0.
\end{equation*}%
We define $H^{\lim }\left( x_{0}\right) $ to consist of all $\left(
P_{0},P\right) \in \mathcal{P}^{\left( m\right) }\left( \mathbb{R}^{n},%
\mathbb{R}^{D}\right) \oplus \mathcal{P}^{\left( \bar{m}\right) }\left( 
\mathbb{R}^{n},\mathbb{R}^{I}\right) $ such that 
\begin{equation*}
\lim_{\substack{ z_{1},\cdots ,z_{L}\rightarrow x_{0}  \\ \left(
x_{0},x_{0},\cdots ,x_{0},z_{1},\cdots ,z_{L}\right) \in E^{++}}}Q\left(
x_{0},P_{0},x_{0},P,\cdots ,x_{0},P,z_{1},\cdots ,z_{L}\right) =0.
\end{equation*}%
Then, by taking $m^{+}=\bar{m}$, we see that (\ref{PSQFxx1}) is equivalent
to (\ref{PSQFxx2}) as in Proposition \ref{SAQF2}. Note that $H^{\lim }\left(
x_{0}\right) $ is a vector subspace of $\mathcal{P}^{\left( m\right) }\left( 
\mathbb{R}^{n},\mathbb{R}^{D}\right) \oplus \mathcal{P}^{\left( \bar{m}%
\right) }\left( \mathbb{R}^{n},\mathbb{R}^{I}\right) $, since 
\begin{equation*}
\left( P_{0},P_{1},\cdots ,P_{k}\right) \mapsto Q\left( x_{0},P_{0},\cdots
,x_{k},P_{k},z_{1},\cdots ,z_{L}\right)
\end{equation*}%
is a semidefinite quadratic form for each $\left( x_{0},x_{1},\cdots
,x_{k},z_{1},\cdots ,z_{L}\right) \in E^{++}$.

The semialgebraic dependence of $H^{\lim }\left( x_{0}\right) $ on $x_{0}\in
E$ is trivial, since $E$ is finite (because $E^{+}$ is finite).

This completes the proof of Propositions \ref{SAQF1} and \ref{SAQF2} in the
base case ($\dim E^{+}=0$).

\underline{For the induction step}, we fix a positive integer $\Delta $, and
assume that Propositions \ref{SAQF1} and \ref{SAQF2} hold whenever $\dim
E^{+}<\Delta $. We will prove those propositions in the case $\dim
E^{+}=\Delta $.

Let us assume that all is as in Section \ref{section-setup}, and that $\dim
E^{+}=\Delta $.

We apply Lemma \ref{growthlemma} to the semialgebraic function $A\left(
x_{0},x_{1},\cdots ,x_{k}\right) $ in (\ref{PSQF7}), (\ref{PSQF8}) of
Section \ref{section-setup}.

Thus, there exist an integer $K\geq 1$, a compact semialgebraic subset $%
\underline{E}^{+}\subset E^{+}$, and a semialgebraic function $A_{1}$
defined on $E$, having the following properties:

\begin{itemize}
\item[\refstepcounter{equation}\text{(\theequation)}\label{P1}] $\dim 
\underline{E}^{+}<\Delta $.
\end{itemize}

Let $x_{0}\in E$. Recall that

\begin{itemize}
\item[\refstepcounter{equation}\text{(\theequation)}\label{P2}] $E^{+}\left(
x_{0}\right) =\left\{ \left( x_{1},\cdots ,x_{k}\right) \in \mathbb{R}%
^{n}\times \cdots \times \mathbb{R}^{n}:\left( x_{0},x_{1},\cdots
,x_{k}\right) \in E^{+}\right\}$.
\end{itemize}

Define

\begin{itemize}
\item[\refstepcounter{equation}\text{(\theequation)}\label{P3}] $\underline{E%
}^{+}\left( x_{0}\right) =\left\{ \left( x_{1},\cdots ,x_{k}\right) \in 
\mathbb{R}^{n}\times \cdots \times \mathbb{R}^{n}:\left( x_{0},x_{1},\cdots
,x_{k}\right) \in \underline{E}^{+}\right\} $.
\end{itemize}

Then, then for each $x_{0}\in E$, the following hold.

\begin{itemize}
\item[\refstepcounter{equation}\text{(\theequation)}\label{P4}] If $%
\underline{E}^{+}\left( x_{0}\right) $ is empty, then 
\begin{eqnarray*}
&&\left\vert A\left( x_{0},x_{1},\cdots ,x_{k}\right) \right\vert \\
&\leq &A_{1}\left( x_{0}\right) \text{ for all }\left( x_{1},\cdots
,x_{k}\right) \in E^{+}\left( x_{0}\right) \text{.}
\end{eqnarray*}

\item[\refstepcounter{equation}\text{(\theequation)}\label{P5}] If $%
\underline{E}^{+}\left( x_{0}\right) $ is nonempty, then%
\begin{eqnarray*}
&&\left\vert A\left( x_{0},x_{1},\cdots ,x_{k}\right) \right\vert \\
&\leq &A_{1}\left( x_{0}\right) \cdot \left[ \dist\left( \left( x_{1},\cdots
,x_{k}\right) ,\underline{E}^{+}\left( x_{0}\right) \right) \right] ^{-K}%
\text{ for all }\left( x_{1},\cdots ,x_{k}\right) \in E^{+}\left(
x_{0}\right) \setminus \underline{E}^{+}\left( x_{0}\right) \text{.}
\end{eqnarray*}
\end{itemize}

Let

\begin{itemize}
\item[\refstepcounter{equation}\text{(\theequation)}\label{P6}] $\underline{E%
}=\left\{ x_{0}\in E:\underline{E}^{+}\left( x_{0}\right) \not=\emptyset
\right\} $.
\end{itemize}

Note that $\underline{E}$ and $\underline{E}^{+}$ are compact semialgebraic
sets, and that

\begin{itemize}
\item[\refstepcounter{equation}\text{(\theequation)}\label{P7}] $\underline{E%
}^{+}\subset \underline{E}\times \underset{k\text{ copies}}{\underbrace{%
\mathbb{R}^{n}\times \cdots \times \mathbb{R}^{n}}}$.

\item[\refstepcounter{equation}\text{(\theequation)}\label{P8}] Let $%
\underline{E}^{++}=\left\{ \left( x_{0},\cdots ,x_{k},z_{1},\cdots
,z_{L}\right) \in E^{++}:\left( x_{0},\cdots ,x_{k}\right) \in \underline{E}%
^{+}\right\} $.
\end{itemize}

For $x_0 \in \underline{E}$, define

\begin{itemize}
\item[\refstepcounter{equation}\text{(\theequation)}\label{P8.a}] $%
\underline{E}^{++}(x_0)=\{(y_1, \cdots, y_k,z_1,\cdots,z_L) \in \mathbb{R}^n
\times\cdots \times \mathbb{R}^n:(x_0,y_1,\cdots,y_k,z_1,\cdots, z_L) \in 
\underline{E}^{++} \}. $
\end{itemize}

Thus, $\underline{E}^{++}$ is a semialgebraic subset of $\underline{E}%
^{+}\times \underset{L\text{ copies}}{\underbrace{\mathbb{R}^{n}\times
\cdots \times \mathbb{R}^{n}}}$.

Thanks to $\eqref{P1}$, our induction hypothesis applies to $%
Q(x_0,P_0,x_1,P_1,\cdots,x_k,P_k,z_1,\cdots,z_k)$ restricted to $%
(x_0,\cdots,x_k,z_1,\cdots,z_L) \in \underline{E}^{++}$. Applying
Proposition \ref{SAQF1}, we therefore learn the following.

There exist $\underline{m}^{\prime }\geq \bar{m}$, and a computable family of vector spaces
\begin{equation}
\underline{H}^{\text{bdd}}\left( x_{0},\cdots ,x_{k}\right) \subset \mathcal{%
P}^{\left( m\right) }\left( \mathbb{R}^{n},\mathbb{R}^{D}\right) \oplus 
\underset{k\text{ copies}}{\underbrace{\mathcal{P}^{\left( \underline{m}%
^{\prime }\right) }\left( \mathbb{R}^{n},\mathbb{R}^{I}\right) \oplus \cdots
\oplus \mathcal{P}^{\left( \underline{m}^{\prime }\right) }\left( \mathbb{R}%
^{n},\mathbb{R}^{I}\right) }},  \label{P9}
\end{equation}%
depending semialgebraically on $\left( x_{0},\cdots ,x_{k}\right) \in 
\underline{E}^{+}$, such that the following holds.

Let $x_{0}\in \underline{E}$, $P_{0}\in \mathcal{P}^{\left( m\right) }\left( 
\mathbb{R}^{n},\mathbb{R}^{D}\right) $, $F\in C_{0}^{\infty }\left( \mathcal{%
\mathbb{R}}^{n},\mathbb{R}^{I}\right) $.

Then 
\begin{equation}
\sup \left\{ Q\left( x_{0},P_{0},y_{1},J_{y_{1}}^{\left( \bar{m}\right)
}F,\cdots ,y_{k},J_{y_{k}}^{\left( \bar{m}\right) }F,z_{1},\cdots
,z_{L}\right) :\left(y_{1},\cdots ,y_{k},z_{1},\cdots ,z_{L}\right) \in 
\underline{E}^{++}(x_0)\right\} <\infty  \label{P10a}
\end{equation}%
if and only if 
\begin{equation}
\left( P_{0},J_{y_{1}}^{\left( \underline{m}^{\prime }\right) }F,\cdots
,J_{y_{k}}^{\left( \underline{m}^{\prime }\right) }F\right) \in \underline{H}%
^{\text{bdd}}\left( x_{0},y_{1},\cdots ,y_{k}\right)  \label{P10b}
\end{equation}%
for each $\left( y_{1},\cdots ,y_{k}\right) \in \underline{E}^{+}(x_0)$.

Later on, we will apply Proposition \ref{SAQF2} in the same setting; see %
\eqref{P70} below.

Let us sketch the arguments that follow, concentrating on the proof of
Proposition \ref{SAQF1}.

Given $x_{0},P_{0},F,$ we must decide whether the quantity 
\begin{equation}
Q\left( x_{0},P_{0},x_{1},J_{x_1}^{\left( \bar{m}\right) }F_{1},\cdots
,x_{k},J_{x_k}^{\left( \bar{m}\right) }F,z_{1},\cdots ,z_{L}\right)
\label{add1}
\end{equation}

stays bounded as $\left( x_{1},\cdots ,x_{k},z_{1},\cdots ,z_{L}\right) $
varies over $E^{++}\left( x_{0}\right) $.

If we restrict attention to the set of such $\left( x_{1},\cdots
,x_{k},z_{1},\cdots ,z_{L}\right) $ where $\left( x_{1},\cdots ,x_{k}\right)
\in \underline{E}^{+}\left( x_{0}\right) $, then already the equivalence of %
\eqref{P10a} to \eqref{P10b} settles the issue. Hence, we may restrict
attention to the set where $\left( x_{1},\cdots ,x_{k}\right) \not\in 
\underline{E}^{+}\left( x_{0}\right) $. Also, \eqref{PSQF8} and \eqref{P4}
easily imply that the quantity (\ref{add1}) stays bounded whenever $%
\underline{E}^{+}\left( x_{0}\right) $ is empty. Hence, we may assume that $%
\underline{E}^{+}\left( x_{0}\right) $ is non-empty.

Thus, our problem is to decide whether the quantity (\ref{add1}) stays
bounded as $\left( x_{1},\cdots ,x_{k},z_{1},\cdots ,z_{L}\right) $ varies
over the set where

\begin{itemize}
\item[\refstepcounter{equation}\text{(\theequation)}\label{add2}] {$\left(
x_{1},\cdots ,x_{k},z_{1},\cdots ,z_{L}\right) \in E^{++}\left( x_{0}\right) 
$, $\left( x_{1},\cdots ,x_{k}\right) \not\in \underline{E}^{+}\left(
x_{0}\right) ,$ assuming $\underline{E}^{+}\left( x_{0}\right)
\not=\emptyset $.}
\end{itemize}

In deciding this question, we are fighting against the factor $\left[ \dist%
\left( \left( x_{1},\cdots ,x_{k}\right) ,\underline{E}^{+}\left(
x_{0}\right) \right) \right] ^{-K}$ in estimate \eqref{P5}.

Our strategy is to pick $\bar{\bar{m}}$ much larger than $\bar{m}$%
, and approximate $J_{x_i}^{\left( \bar{m}\right) }F$ by $\pi _{x_{i}}^{%
\bar{\bar{m}}\rightarrow \bar{m}}J_{\underline{x}_{i}}^{\left( 
\bar{\bar{m}}\right) }F$ in (\ref{add1}), where $\left( \underline{%
x}_{1},\cdots ,\underline{x}_{k}\right) $ is a point of $\underline{E}%
^{+}\left( x_{0}\right) $ lying as close as possible to $\left( x_{1},\cdots
,x_{k}\right) $. According to Proposition \ref{taylor'sthm}, the derivatives
of the error $J_{x_{i}}^{\left( \bar{m}\right) }F-\pi _{x_{i}}^{\overline{%
\overline{m}}\rightarrow \bar{m}}J_{\underline{x}_{i}}^{\left( \overline{%
\overline{m}}\right) }F$ at $x_{i}$ are 
\begin{eqnarray*}
&&O\left( \left[ \dist\left( \left( x_{1},\cdots ,x_{k}\right) ,\left( 
\underline{x}_{1},\cdots ,\underline{x}_{k}\right) \right) \right] ^{%
\bar{\bar{m}}-\bar{m}}\right) \\
&=&O\left( \left[ \dist \left( ( x_{1},\cdots ,x_{k}),\underline{E}%
^{+}\left( x_{0}\right) \right) \right] ^{\bar{\bar{m}}-\overline{m%
}}\right) \text{.}
\end{eqnarray*}%
If we pick $\bar{\bar{m}}$ large enough, then the small factor $%
\left[ \dist\left( \cdots \right) \right] ^{\bar{\bar{m}}-\bar{m}}$
overcomes the large factor $\left[ \dist\left( \cdots \right) \right] ^{-K}$
in \eqref{P5}.

Therefore, the induction step in the proof of Proposition \ref{SAQF1} comes
down to deciding whether the quantity 
\begin{equation}
Q\left( x_{0},P_{0},x_{1},\pi _{x_{1}}^{\bar{\bar{m}}\rightarrow 
\bar{m}}J_{\underline{x}_{1}}^{\left( \bar{\bar{m}}\right)
}F,\cdots ,x_{k},\pi _{x_{k}}^{\bar{\bar{m}}\rightarrow \overline{m%
}}J_{\underline{x}_{k}}^{\left( \bar{\bar{m}}\right)
}F,z_{1},\cdots ,z_{L}\right)  \label{add3}
\end{equation}%
remains bounded as $\left( \underline{x}_{1},\cdots ,\underline{x}%
_{k},z_{1},\cdots ,z_{L},x_{1},\cdots ,x_{k}\right) $ varies over the set
where%
\begin{equation}
\left[ 
\begin{array}{c}
\left( x_{1},\cdots ,x_{k},z_{1},\cdots ,z_{L}\right) \in E^{++}\left(
x_{0}\right) ,\left( x_{1},\cdots ,x_{k}\right) \not\in \underline{E}%
^{+}\left( x_{0}\right) ,\left( \underline{x}_{1},\cdots ,\underline{x}%
_{k}\right) \in \underline{E}^{+}\left( x_{0}\right) , \\ 
\dist\left( \left( x_{1},\cdots ,x_{k}\right) ,\underline{E}%
^{+}(x_{0})\right) =\dist\left( \left( x_{1},\cdots ,x_{k}\right) ,\left( 
\underline{x}_{1},\cdots ,\underline{x}_{k}\right) \right)%
\end{array}%
\right] \text{.}  \label{add4}
\end{equation}

We are therefore tempted to define

\begin{eqnarray}
&&\tilde{Q}\left( x_{0},P_{0},\underline{x}_{1},P_{1},\cdots ,\underline{x}%
_{k},P_{k},z_{1},\cdots ,z_{L},x_{1},\cdots ,x_{k}\right)  \label{add5} \\
&=&Q\left( x_{0},P_{0},x_{1},\pi _{x_{1}}^{\bar{\bar{m}}%
\rightarrow \bar{m}}P_{1},\cdots ,{x}_{k},\pi _{x_{k}}^{\overline{%
\overline{m}}\rightarrow \bar{m}}P_{k},z_{1},\cdots ,z_{L}\right)  \notag
\end{eqnarray}%
for $\left( \underline{x}_{1},\cdots ,\underline{x}_{k},z_{1},\cdots
,z_{L},x_{1},\cdots ,x_{k}\right) $ as in (\ref{add4}), and for $P_{0}\in 
\mathcal{P}^{\left( m\right) }\left( \mathbb{R}^{n},\mathbb{R}^{D}\right) $, 
$P_{1},\cdots ,P_{k}\in \mathcal{P}^{\left( \bar{\bar{m}}\right)
}\left( \mathbb{R}^{n},\mathbb{R}^{I}\right) $.

Our problem is then to decide whether the quantity 
\begin{equation}
\tilde{Q}\left( x_{0},P_{0},\underline{x}_{1},J_{\underline{x}_{1}}^{\left( 
\bar{\bar{m}}\right) }F,\cdots ,\underline{x}_{k},J_{\underline{x}%
_{k}}^{\left( \bar{\bar{m}}\right) }F,z_{1},\cdots
,z_{L},x_{1},\cdots ,x_{k}\right)  \label{add6}
\end{equation}%
remains bounded as $\left( \underline{x}_{1},\cdots ,\underline{x}%
_{k},z_{1},\cdots ,z_{L},x_{1},\cdots ,x_{k}\right) $ varies over the set $%
\hat{E}^{++}\left( x_{0}\right) $ defined by (\ref{add4}). We hope to decide
this question by applying Proposition \ref{SAQF1} to the quadratic form $%
\tilde{Q}$ and the sets 
\begin{equation}
\left[ 
\begin{array}{l}
\hat{E}^{++}=\left\{ \left( x_{0},\underline{x}_{1},\cdots ,\underline{x}%
_{k},z_{1},\cdots ,z_{L},x_{1},\cdots ,x_{k}\right) :\text{ Conditions (\ref%
{add4}) hold}\right\} \\ 
\hat{E}^{+}=\underline{E}^{+} \\ 
\hat{E}=\underline{E}\text{, see \eqref{P6}}%
\end{array}%
\right] \text{,}  \label{add7}
\end{equation}%
in place of $Q,E^{++},E^{+},E$.

Here, our present $\left( \underline{x}_{1},\cdots ,\underline{x}_{k}\right) 
$ plays the r\^ole of $\left( x_{1},\cdots ,x_{k}\right) $ in the statement of
Proposition \ref{SAQF1}, while our present $\left( z_{1},\cdots
,z_{L},x_{1},\cdots ,x_{k}\right) $ plays the r\^ole of $\left( z_{1},\cdots
,z_{L}\right) $. The key point is that $\dim \hat{E}^{+}=\dim \underline{E}^+%
<\Delta $, by \eqref{P1}, so we can hope to apply our induction hypothesis
(Proposition \ref{SAQF1} holds when $\dim E^{+}<\Delta $). If we could apply
Proposition \ref{SAQF1} to $\tilde{Q},\hat{E}^{++},\hat{E}^{+},\hat{E}$,
then we could decide whether the quantity \eqref{add6} satisfies the
required boundedness condition, and thus complete our inductive proof of
Proposition \ref{SAQF1}.

Unfortunately, the above doesn't quite work, because the quadratic form $%
\tilde{Q}$ needn't satisfy the analogue of hypothesis \eqref{PSQF8} of
Proposition \ref{SAQF1}.

We rescue the argument by modifying $\tilde{Q}$. Given $\left( x_{0},%
\underline{x}_{1},\cdots ,\underline{x}_{k}\right) $ we define a projection $%
\Pi _{\left( x_{0},\underline{x}_{1},\cdots ,\underline{x}_{k}\right) }$
from 
\begin{equation*}
\mathcal{P}^{\left( m\right) }\left( \mathbb{R}^{n},\mathbb{R}^{D}\right)
\oplus \underset{k\text{ copies}}{\underbrace{\mathcal{P}^{\left( \overline{%
\overline{m}}\right) }\left( \mathbb{R}^{n},\mathbb{R}^{I}\right) \oplus
\cdots \oplus \mathcal{P}^{\left( \bar{\bar{m}}\right) }\left( 
\mathbb{R}^{n},\mathbb{R}^{I}\right) }}
\end{equation*}%
to a subspace on which $\tilde{Q}$ behaves well. We then define 
\begin{eqnarray*}
&&\hat{Q}\left( x_{0},P_{0},\underline{x}_{1},P_{1},\cdots ,\underline{x}%
_{k},P_{k},z_{1},\cdots ,z_{L},x_{1},\cdots ,x_{k}\right)  \\
&=&Q\left( x_{0},\hat{P}_{0},x_{1},\pi _{x_{1}}^{\bar{\bar{m}}%
\rightarrow \bar{m}}\hat{P}_{1},\cdots ,x_{k},\pi _{x_{k}}^{\overline{%
\overline{m}}\rightarrow \bar{m}}\hat{P}_{k},z_{1},\cdots ,z_{L}\right) 
\text{,}
\end{eqnarray*}%
where $\left( \hat{P}_{0},\hat{P}_{1},\cdots ,\hat{P}_{k}\right)
=\prod_{\left( x_{0},\underline{x}_{1},\cdots ,\underline{x}_{k}\right)
}\left( P_{0},P_{1},\cdots ,P_{k}\right) $. (Compare with (\ref{add5}).)

Our induction hypothesis -- Proposition \ref{SAQF1} in lower dimension --
applies to $\hat{Q},\hat{E}^{++},\hat{E}^{+},\hat{E},$ allowing us to argue
as we hoped to do for $\tilde{Q},\hat{E}^{++},\hat{E}^{+},\hat{E}$. Thus, we
can complete our induction on $\dim E^{+}$. This completes our preview of
the proof of Proposition \ref{SAQF1}.

Proposition \ref{SAQF2} is proven similarly. In some cases, we approximate $%
J_{x_{i}}^{\left( \bar{m}\right) }F$ in \eqref{PSQFx1} by $\pi _{x_{i}}^{%
\bar{\bar{m}}\rightarrow \bar{m}}J_{x_{0}}^{\left( \overline{%
\overline{m}}\right) }F$, rather than by $\pi _{x_{i}}^{\bar{\bar{m%
}}\rightarrow \bar{m}}J_{\underline{x}_{i}}^{\left( \bar{\bar{m}}%
\right) }F$.

Having done with previews, let us return to the proof of Proposition \ref%
{SAQF1}.

We will now estimate $Q(x_0, P_0, x_1, J_{x_1}^{(\bar{m})}F, \cdots, x_k,
J_{x_k}^{(\bar{m})}F,z_1,\cdots,z_L)$ under several different assumptions on 
$(x_0,x_1,\cdots, x_k,z_1, \cdots, z_L) \in E^{++}$.

\underline{Case 1:}

Suppose $\left( x_{0},x_{1},\cdots ,x_{k}\right) \in E^{+}\setminus 
\underline{E}^{+}$, and suppose also that $\underline{E}^{+}\left(
x_{0}\right) $ is nonempty. Let $F \in C^{\infty}_0(\mathbb{R}^n,\mathbb{R}%
^I)$.

Let $\left( \underline{x}_{1},\cdots ,\underline{x}_{k}\right) \in 
\underline{E}^{+}\left( x_{0}\right) $ be as close as possible to $\left(
x_{1},\cdots ,x_{k}\right) $. (Recall, $\underline{E}^{+}\left( x_{0}\right) 
$ is compact.)

Then, for $\bar{\bar{m}}>\bar{m}$ to be picked in a moment, Taylor's theorem
(see Proposition \ref{taylor'sthm}) gives%
\begin{eqnarray}
&&\left\vert \partial ^{\alpha }\left\{ \pi _{x_{j}}^{\bar{\bar{m}}%
\rightarrow \bar{m}}J_{\underline{x}_{j}}^{\left( \bar{\bar{m}}\right)
}F-J_{x_{j}}^{\left( \bar{m}\right) }F\right\} \left( x_{j}\right)
\right\vert  \notag \\
&\leq &C\left\Vert F\right\Vert _{C^{\bar{\bar{m}}}\left( \mathbb{R}^{n},%
\mathbb{R}^{I}\right) }\cdot \left[ \dist\left( \left( x_{1},\cdots
,x_{k}\right) ,\left( \underline{x}_{1},\cdots ,\underline{x}_{k}\right)
\right) \right] ^{\bar{\bar{m}}-\bar{m}}  \label{P11}
\end{eqnarray}%
for $\left\vert \alpha \right\vert \leq \bar{m}$, $j=1,\cdots k$, $F\in
C_{0}^{\infty }\left( \mathbb{R}^{n},\mathbb{R}^{I}\right) $.

Throughout this section, we write $c,C,C^{\prime },$ etc. to denote
constants determined by $\bar{\bar{m}}$, $\bar{m},$ $n,k,L,I,K,D,$ and an
upper bound for diameter of $E^{+}$. These symbols may denote different
constants in different occurrences.

Since 
\begin{equation*}
\left( P_{0},P_{1},\cdots ,P_{k}\right) \mapsto Q\left( x_{0},P_{0},\cdots
,x_{k},P_{k},z_{1},\cdots ,z_{L}\right)
\end{equation*}%
is a positive semidefinite quadratic form, we have the estimates%
\begin{eqnarray}
&&Q\left( x_{0},P_{0},x_{1},J_{x_{1}}^{\left( \bar{m}\right) }F,\cdots
,x_{k},J_{x_{k}}^{\left( \bar{m}\right) }F,z_{1},\cdots ,z_{L}\right)  \notag
\\
&\leq &2Q\left( x_{0},P_{0},x_{1},\pi _{x_{1}}^{\bar{\bar{m}}\rightarrow 
\bar{m}}J_{\underline{x}_{1}}^{\left( \bar{\bar{m}}\right) }F,\cdots
,x_{k},\pi _{x_{k}}^{\bar{\bar{m}}\rightarrow \bar{m}}J_{\underline{x}%
_{k}}^{\left( \bar{\bar{m}}\right) }F,z_{1},\cdots ,z_{L}\right)  \label{P12}
\\
&&+2Q\left( x_{0},0,x_{1},\left\{ \pi _{x_{1}}^{\bar{\bar{m}}\rightarrow 
\bar{m}}J_{\underline{x}_{1}}^{\left( \bar{\bar{m}}\right)
}F-J_{x_{1}}^{\left( \bar{m}\right) }F\right\} ,\cdots ,x_{k},\left\{ \pi
_{x_{k}}^{\bar{\bar{m}}\rightarrow \bar{m}}J_{\underline{x}_{k}}^{\left( 
\bar{\bar{m}}\right) }F-J_{x_{k}}^{\left( \bar{m}\right) }F\right\}
,z_{1},\cdots ,z_{L}\right)  \notag
\end{eqnarray}%
and similarly 
\begin{eqnarray}
&&Q\left( x_{0},P_{0},x_{1},\pi _{x_{1}}^{\bar{\bar{m}}\rightarrow \bar{m}%
}J_{\underline{x}_{1}}^{\left( \bar{\bar{m}}\right) }F,\cdots , x_k, \pi
_{x_{k}}^{\bar{\bar{m}}\rightarrow \bar{m}}J_{\underline{x}_{k}}^{\left( 
\bar{\bar{m}}\right) }F,z_{1},\cdots ,z_{L}\right)  \notag \\
&\leq &2Q\left( x_{0},P_{0},x_{1},J_{x_{1}}^{\left( \bar{{m}}\right)
}F,\cdots ,x_{k},J_{x_{k}}^{\left( \bar{{m}}\right) }F,z_{1},\cdots
,z_{L}\right)  \label{P13} \\
&&+2Q\left( x_{0},0,x_{1},\left\{ \pi _{x_{1}}^{\bar{\bar{m}}\rightarrow 
\bar{m}}J_{\underline{x}_{1}}^{\left( \bar{\bar{m}}\right)
}F-J_{x_{1}}^{\left( \bar{m}\right) }F\right\} ,\cdots ,x_{k},\left\{ \pi
_{x_{k}}^{\bar{\bar{m}}\rightarrow \bar{m}}J_{\underline{x}_{k}}^{\left( 
\bar{\bar{m}}\right) }F-J_{{x}_{k}}^{(\bar{m})}F\right\} ,z_{1},\cdots
,z_{L}\right) .  \notag
\end{eqnarray}%
From (\ref{P5}), (\ref{P11}), and hypothesis (\ref{PSQF8}), we see that 
\begin{eqnarray}
0 &\leq &Q\left( x_{0},0,x_{1},\left\{ \pi _{x_{1}}^{\bar{\bar{m}}%
\rightarrow \bar{m}}J_{\underline{x}_{1}}^{\left( \bar{\bar{m}}\right)
}F-J_{x_{1}}^{\left( \bar{m}\right) }F\right\} ,\cdots ,x_{k},\left\{ \pi
_{x_{k}}^{\bar{\bar{m}}\rightarrow \bar{m}}J_{\underline{x}_{k}}^{\left( 
\bar{\bar{m}}\right) }F-J_{x_{k}}^{(\bar{m})}F\right\} ,z_{1},\cdots
,z_{L}\right)  \notag \\
&\leq &A\left( x_{0},x_{1},\cdots ,x_{k}\right) \cdot C^{2}\left\Vert
F\right\Vert _{C^{\bar{\bar{m}}}\left( \mathbb{R}^{n},\mathbb{R}^{I}\right)
}^{2}\cdot \left[ \dist\left( \left( x_{1},\cdots ,x_{k}\right) ,\left( 
\underline{x}_{1},\cdots ,\underline{x}_{k}\right) \right) \right] ^{2\left( 
\bar{\bar{m}}-\bar{m}\right) }  \notag \\
&\leq &C^{2}\left\Vert F\right\Vert _{C^{\bar{\bar{m}}}\left( \mathbb{R}^{n},%
\mathbb{R}^{I}\right) }^{2}\cdot A_{1}\left( x_{0}\right) \cdot \left[ \dist%
\left( \left( x_{1},\cdots ,x_{k}\right) ,\left( \underline{x}_{1},\cdots ,%
\underline{x}_{k}\right) \right) \right] ^{2\left( \bar{\bar{m}}-\bar{m}%
\right) -K},  \label{P14}
\end{eqnarray}
where the last inequality follows from the fact that $\left( \underline{x}%
_{1},\cdots ,\underline{x}_{k}\right) $ is a closest point to $\left(
x_{1},\cdots ,x_{k}\right) $ in $\underline{E}^{+}\left( x_{0}\right) $.

We now pick $\bar{\bar{m}}$ so large that $2\left( \bar{\bar{m}}-\bar{m}%
\right) -K\geq 200$.

From (\ref{P12}), (\ref{P13}), (\ref{P14}), we then learn that 
\begin{eqnarray}
&&Q\left( x_{0},P_{0},x_{1},J_{x_{1}}^{\left( \bar{m}\right) }F,\cdots
,x_{k},J_{x_{k}}^{\left( \bar{m}\right) }F,z_{1},\cdots ,z_{L}\right)  \notag
\\
&\leq &2Q\left( x_{0},P_{0},x_{1},\pi _{x_{1}}^{\bar{\bar{m}}\rightarrow 
\bar{m}}J_{\underline{x}_{1}}^{\left( \bar{\bar{m}}\right) }F,\cdots
,x_{k},\pi _{x_{k}}^{\bar{\bar{m}}\rightarrow \bar{m}}J_{\underline{x}%
_{k}}^{\left( \bar{\bar{m}}\right) }F,z_{1},\cdots ,z_{L}\right)  \notag \\
&&+C^{\prime }\left\Vert F\right\Vert _{C^{\bar{\bar{m}}}\left( \mathbb{R}%
^{n},\mathbb{R}^{I}\right) }^{2}\cdot A_{1}\left( x_{0}\right) \cdot
\left\vert \left( x_{1},\cdots ,x_{k}\right) -\left( \underline{x}%
_{1},\cdots ,\underline{x}_{k}\right) \right\vert ^{2},  \label{P15}
\end{eqnarray}%
and 
\begin{eqnarray}
&&Q\left( x_{0},P_{0},x_{1},\pi _{x_{1}}^{\bar{\bar{m}}\rightarrow \bar{m}%
}J_{\underline{x}_{1}}^{\left( \bar{\bar{m}}\right) }F,\cdots ,x_{k},\pi
_{x_{k}}^{\bar{\bar{m}}\rightarrow \bar{m}}J_{\underline{x}_{k}}^{\left( 
\bar{\bar{m}}\right) }F,z_{1},\cdots ,z_{L}\right)  \notag \\
&\leq &2Q\left( x_{0},P_{0},x_{1},J_{x_{1}}^{\left( \bar{m}\right) }F,\cdots
,x_{k},J_{x_{k}}^{\left( \bar{m}\right) }F,z_{1},\cdots ,z_{L}\right)
\label{P16} \\
&&+C^{\prime }\left\Vert F\right\Vert _{C^{\bar{\bar{m}}}\left( \mathbb{R}%
^{n},\mathbb{R}^{I}\right) }^{2}\cdot A_{1}\left( x_{0}\right) \cdot
\left\vert \left( x_{1},\cdots ,x_{k}\right) -\left( \underline{x}%
_{1},\cdots ,\underline{x}_{k}\right) \right\vert ^{2}.  \notag
\end{eqnarray}

\begin{itemize}
\item[\refstepcounter{equation}\text{(\theequation)}\label{P17}] Estimates (%
\ref{P15}) and (\ref{P16}) hold in Case 1, i.e., when $(x_0,x_1,%
\cdots,x_k,z_1,\cdots,z_L) \in E^{++}$,  \\ $\left( x_{0},x_{1},\cdots
,x_{k}\right) \in E^{+}\setminus \underline{E}^{+}$, $\underline{E}%
^{+}\left( x_{0}\right) $ is nonempty, $\left( \underline{x}_{1},\cdots ,%
\underline{x}_{k}\right) \in \underline{E}^{+}\left( x_{0}\right) $ is as
close as possible to $\left( x_{1},\cdots ,x_{k}\right)$, and $F \in
C_0^{\infty}(\mathbb{R}^n,\mathbb{R}^I)$.
\end{itemize}

This completes the discussion of Case 1.

\underline{Case 2}: Now suppose $\left( x_{0},x_{1},\cdots ,x_{k}\right) \in
E^{+}$ and that $\underline{E}^{+}\left( x_{0}\right) $ is empty.

Then (\ref{P4}) and hypothesis (\ref{PSQF8}) show that 
\begin{eqnarray}
&&Q\left( x_{0},P_{0},x_{1},J_{x_{1}}^{\left( \bar{m}\right) }F,\cdots
,x_{k},J_{x_{k}}^{\left( \bar{m}\right) }F,z_{1},\cdots ,z_{L}\right)  \notag
\\
&\leq &A\left( x_{0},x_{1},\cdots ,x_{k}\right) \left[ \sum_{i=1}^{k}\sum_{%
\left\vert \alpha \right\vert \leq \bar{m}}\left\vert \partial ^{\alpha
}F\left( x_{i}\right) \right\vert ^{2}+\sum_{\left\vert \alpha \right\vert
\leq {m}}\left\vert \partial ^{\alpha }P_{0}\left( x_{0}\right) \right\vert
^{2}\right]  \notag \\
&\leq &CA_{1}\left( x_{0}\right) \left[ \sum_{\left\vert \alpha \right\vert
\leq {m}}\left\vert \partial ^{\alpha }P_{0}\left( x_{0}\right) \right\vert
^{2}+\left\Vert F\right\Vert _{C^{\bar{\bar{m}}}\left( \mathbb{R}^{n},%
\mathbb{R}^{I}\right) }^{2}\right] ,  \label{P18}
\end{eqnarray}%
whenever $F\in C_{0}^{\infty }\left( \mathbb{R}^{n},\mathbb{R}^{I}\right) $
and $\left( x_{0},x_{1},\cdots ,x_{k},z_{1},\cdots ,z_{L}\right) \in E^{++}$.

Moreover, because 
\begin{equation*}
\left( P_{0},P_{1},\cdots ,P_{k}\right) \mapsto Q\left( x_{0},P_{0},\cdots
,x_{k},P_{k},z_{1},\cdots ,z_{L}\right)
\end{equation*}%
is a nonnegative quadratic form, we have 
\begin{eqnarray}
&&Q\left( x_{0},P_{0},x_{1},J_{x_{1}}^{\left( \bar{m}\right) }F,\cdots
,x_{k},J_{x_{k}}^{\left( \bar{m}\right) }F,z_{1},\cdots ,z_{L}\right)  \notag
\\
&\leq &2Q\left( x_{0},P_{0},x_{1},\pi _{x_{1}}^{\bar{\bar{m}}\rightarrow 
\bar{m}}J_{x_{0}}^{\left( \bar{\bar{m}}\right) }F,\cdots ,x_{k},\pi
_{x_{k}}^{\bar{\bar{m}}\rightarrow \bar{m}}J_{x_{0}}^{\left( \bar{\bar{m}}%
\right) }F,z_{1},\cdots ,z_{L}\right)  \label{P19} \\
&&+2Q\left( x_{0},0,x_{1},\left\{ \pi _{x_{1}}^{\bar{\bar{m}}\rightarrow 
\bar{m}}J_{x_{0}}^{\left( \bar{\bar{m}}\right) }F-J_{x_{1}}^{\left( \bar{m}%
\right) }F\right\} ,\cdots ,x_{k},\left\{ \pi _{x_{k}}^{\bar{\bar{m}}%
\rightarrow \bar{m}}J_{x_{0}}^{\left( \bar{\bar{m}}\right)
}F-J_{x_{k}}^{\left( \bar{m}\right) }F\right\} ,z_{1},\cdots ,z_{L}\right) 
\notag
\end{eqnarray}%
and 
\begin{eqnarray}
&&Q\left( x_{0},P_{0},x_{1},\pi _{x_{1}}^{\bar{\bar{m}}\rightarrow \bar{m}%
}J_{x_{0}}^{\left( \bar{\bar{m}}\right) }F,\cdots ,x_{k},\pi _{x_{k}}^{\bar{%
\bar{m}}\rightarrow \bar{m}}J_{x_{0}}^{\left( \bar{\bar{m}}\right)
}F,z_{1},\cdots ,z_{L}\right)  \notag \\
&\leq &2Q\left( x_{0},P_{0},x_{1},J_{x_{1}}^{\left( \bar{m}\right) }F,\cdots
,x_{k},J_{x_{k}}^{\left( \bar{m}\right) }F,z_{1},\cdots ,z_{L}\right)
\label{P20} \\
&&+2Q\left( x_{0},0,x_{1},\left\{ \pi _{x_{1}}^{\bar{\bar{m}}\rightarrow 
\bar{m}}J_{x_{0}}^{\left( \bar{\bar{m}}\right) }F-J_{x_{1}}^{\left( \bar{m}%
\right) }F\right\} ,\cdots ,x_{k},\left\{ \pi _{x_{k}}^{\bar{\bar{m}}%
\rightarrow \bar{m}}J_{x_{0}}^{\left( \bar{\bar{m}}\right)
}F-J_{x_{k}}^{\left( \bar{m}\right) }F\right\} ,z_{1},\cdots ,z_{L}\right) .
\notag
\end{eqnarray}%
By (\ref{P4}), hypothesis (\ref{PSQF8}), and Taylor's theorem (see
Proposition \ref{taylor'sthm}), we have 
\begin{eqnarray}
&&Q\left( x_{0},0,x_{1},\left\{ \pi _{x_{1}}^{\bar{\bar{m}}\rightarrow \bar{m%
}}J_{x_{0}}^{\left( \bar{\bar{m}}\right) }F-J_{x_{1}}^{\left( \bar{m}\right)
}F\right\} ,\cdots ,x_{k},\left\{ \pi _{x_{k}}^{\bar{\bar{m}}\rightarrow 
\bar{m}}J_{x_{0}}^{\left( \bar{\bar{m}}\right) }F-J_{x_{k}}^{\left( \bar{m}%
\right) }F\right\} ,z_{1},\cdots ,z_{L}\right)  \notag \\
&\leq &A_{1}\left( x_{0}\right) \sum_{i=1}^{k}\sum_{\left\vert \alpha
\right\vert \leq \bar{m}}\left\vert \partial ^{\alpha }\left\{ \pi _{x_{i}}^{%
\bar{\bar{m}}\rightarrow \bar{m}}J_{x_{0}}^{\left( \bar{\bar{m}}\right)
}F-J_{x_{i}}^{\left( \bar{m}\right) }F\right\} \left( x_{i}\right)
\right\vert ^{2}  \notag \\
&\leq &CA_{1}\left( x_{0}\right) \left\Vert F\right\Vert _{C^{\bar{\bar{m}}%
}\left( \mathbb{R}^{n},\mathbb{R}^{I}\right) }^{2}\sum_{i=1}^{k}\left\vert
x_{i}-x_{0}\right\vert ^{2}\text{,}  \label{P21}
\end{eqnarray}%
since we picked $\bar{\bar{m}}$ so that (in particular) $\bar{\bar{m}}-\bar{m%
}\geq 100$.

Putting (\ref{P21}) into (\ref{P19}), (\ref{P20}), we see that 
\begin{eqnarray}
&&Q\left( x_{0},P_{0},x_{1},J_{x_{1}}^{\left( \bar{m}\right) }F,\cdots
,x_{k},J_{x_{k}}^{\left( \bar{m}\right) }F,z_{1},\cdots ,z_{L}\right)  \notag
\\
&\leq &2Q\left( x_{0},P_{0},x_{1},\pi _{x_{1}}^{\bar{\bar{m}}\rightarrow 
\bar{m}}J_{x_{0}}^{\left( \bar{\bar{m}}\right) }F,\cdots ,x_{k},\pi
_{x_{k}}^{\bar{\bar{m}}\rightarrow \bar{m}}J_{x_{0}}^{\left( \bar{\bar{m}}%
\right) }F,z_{1},\cdots ,z_{L}\right)  \label{P22} \\
&&+2CA_{1}\left( x_{0}\right) \left\Vert F\right\Vert _{C^{\bar{\bar{m}}%
}\left( \mathbb{R}^{n},\mathbb{R}^{I}\right) }^{2}\max_{i=1,\cdots
,k}\left\vert x_{i}-x_{0}\right\vert ^{2}  \notag
\end{eqnarray}%
and 
\begin{eqnarray}
&&Q\left( x_{0},P_{0},x_{1},\pi _{x_{1}}^{\bar{\bar{m}}\rightarrow \bar{m}%
}J_{x_{0}}^{\left( \bar{\bar{m}}\right) }F,\cdots ,x_{k},\pi _{x_{k}}^{\bar{%
\bar{m}}\rightarrow \bar{m}}J_{x_{0}}^{\left( \bar{\bar{m}}\right)
}F,z_{1},\cdots ,z_{L}\right)  \notag \\
&\leq &2Q\left( x_{0},P_{0},x_{1},J_{x_{1}}^{\left( \bar{m}\right) }F,\cdots
,x_{k},J_{x_{k}}^{\left( \bar{m}\right) }F,z_{1},\cdots ,z_{L}\right)
\label{P23} \\
&&+2CA_{1}\left( x_{0}\right) \left\Vert F\right\Vert _{C^{\bar{\bar{m}}%
}\left( \mathbb{R}^{n},\mathbb{R}^{I}\right) }^{2}\max_{i=1,\cdots
,k}\left\vert x_{i}-x_{0}\right\vert ^{2}.  \notag
\end{eqnarray}

Estimates (\ref{P18}), (\ref{P22}), (\ref{P23}) hold in Case 2, i.e., when

\begin{itemize}
\item[\refstepcounter{equation}\text{(\theequation)}\label{P24}] $\left(
x_{0},x_{1},\cdots ,x_{k}\right) \in E^{+}$, $\underline{E}^{+}\left(
x_{0}\right) =\emptyset $, $\left( x_{0},x_{1},\cdots ,x_{k},z_{1},\cdots
,z_{L}\right) \in E^{++}$, and $F\in C_{0}^{\infty }\left( \mathbb{R}^{n},%
\mathbb{R}^{I}\right) $.
\end{itemize}

This completes our discussion of Case 2.

One of the two cases Case 1 and Case 2 above must occur whenever $\left(
x_{0},\cdots ,x_{k},z_{1},\cdots ,z_{L}\right) \in E^{++}$ and $\left(
x_{0},\cdots ,x_{k}\right) \in E^{+}\setminus \underline{E}^{+}$.

However, it is also useful to study the following subcase of Case 1.

\underline{Case 1'}: Suppose $(x_0,x_1,\cdots,x_k,z_1,\cdots,z_L) \in E^{++}$%
, $\left( x_{0},x_{1},\cdots ,x_{k}\right) \in E^{+}\setminus \underline{E}%
^{+}$, $\underline{E}^{+}\left( x_{0}\right) \not=\emptyset $, and $\left(
x_{0},\cdots ,x_{0}\right) \not\in \underline{E}^{+}\left( x_{0}\right) $.
Let $F \in C_0^{\infty}(\mathbb{R}^n,\mathbb{R}^I)$.

Because $\underline{E}^{+}$, $\underline{E}^{+}\left( x_{0}\right) $ are
compact, we have 
\begin{equation*}
\dist\left( \left( x_{0},\cdots ,x_{0}\right) ,\underline{E}^{+}\left(
x_{0}\right) \right) >0.
\end{equation*}%
Suppose 
\begin{equation*}
\left\vert \left( x_{1},\cdots ,x_{k}\right) -\left( x_{0},\cdots
,x_{0}\right) \right\vert <\frac{1}{2}\dist\left( \left( x_{0},\cdots
,x_{0}\right) ,\underline{E}^{+}\left( x_{0}\right) \right) ,
\end{equation*}%
hence, by (\ref{P5}), 
\begin{equation*}
\left\vert A\left( x_{0},x_{1},\cdots ,x_{k}\right) \right\vert \leq
CA_{1}\left( x_{0}\right) \left[ \dist\left( \left( x_{0},\cdots
,x_{0}\right) ,\underline{E}^{+}\left( x_{0}\right) \right) \right] ^{-K}.
\end{equation*}%
Hypothesis (\ref{PSQF8}) therefore yields 
\begin{eqnarray*}
&&Q\left( x_{0},P_{0},x_{1},P_{1},\cdots ,x_{k},P_{k},z_{1},\cdots
,z_{L}\right) \\
&\leq &CA_{1}\left( x_{0}\right) \left[ \dist\left( \left( x_{0},\cdots
,x_{0}\right) ,\underline{E}^{+}\left( x_{0}\right) \right) \right] ^{-K}%
\left[ \sum_{|\alpha |\leq m}|\partial ^{\alpha
}P_{0}(x_{0})|^{2}+\sum_{i=1}^{k}\sum_{\left\vert \alpha \right\vert \leq 
\bar{m}}\left\vert \partial ^{\alpha }P_{i}\left( x_{i}\right) \right\vert
^{2}\right]
\end{eqnarray*}%
for any $P_{0},P_{1},\cdots ,P_{k}$; compare with (\ref{P18}). Hence, we may
proceed as in the proof of (\ref{P22}), (\ref{P23}), to establish the
following result.

\begin{itemize}
\item[\refstepcounter{equation}\text{(\theequation)}\label{P24Beta}] In Case
1', if $\left\vert \left( x_{1},\cdots ,x_{k}\right) -\left( x_{0},\cdots
,x_{0}\right) \right\vert <\frac{1}{2}\dist\left( \left( x_{0},\cdots
,x_{0}\right) ,\underline{E}^{+}\left( x_{0}\right) \right) $, then we have 
\begin{eqnarray*}
&&Q\left( x_{0},P_{0},x_{1},J_{x_{1}}^{\left( \bar{m}\right) }F,\cdots
,x_{k},J_{x_{k}}^{\left( \bar{m}\right) }F,z_{1},\cdots ,z_{L}\right) \\
&\leq &2Q\left( x_{0},P_{0},x_{1},\pi _{x_{1}}^{\bar{\bar{m}}\rightarrow 
\bar{m}}J_{x_{0}}^{\left( \bar{\bar{m}}\right) }F,\cdots ,x_{k},\pi
_{x_{k}}^{\bar{\bar{m}}\rightarrow \bar{m}}J_{x_{0}}^{\left( \bar{\bar{m}}%
\right) }F,z_{1},\cdots ,z_{L}\right) \\
&&+\frac{C\left\Vert F\right\Vert _{C^{\bar{\bar{m}}}\left( \mathbb{R}^{n},%
\mathbb{R}^{I}\right) }^{2}A_{1}\left( x_{0}\right) }{\left[ \dist\left(
\left( x_{0},\cdots ,x_{0}\right) ,\underline{E}^{+}\left( x_{0}\right)
\right) \right] ^{K}}\cdot \max_{i=1,\cdots ,k}\left\vert
x_{i}-x_{0}\right\vert ^{2}
\end{eqnarray*}%
and 
\begin{eqnarray*}
&&Q\left( x_{0},P_{0},x_{1},\pi _{x_{1}}^{\bar{\bar{m}}\rightarrow \bar{m}%
}J_{x_{0}}^{\left( \bar{\bar{m}}\right) }F,\cdots ,x_{k},\pi _{x_{k}}^{\bar{%
\bar{m}}\rightarrow \bar{m}}J_{x_{0}}^{\left( \bar{\bar{m}}\right)
}F,z_{1},\cdots ,z_{L}\right) \\
&\leq &2Q\left( x_{0},P_{0},x_{1},J_{x_{1}}^{\left( \bar{m}\right) }F,\cdots
,x_{k},J_{x_{k}}^{\left( \bar{m}\right) }F,z_{1},\cdots ,z_{L}\right) \\
&&+\frac{C\left\Vert F\right\Vert _{C^{\bar{\bar{m}}}\left( \mathbb{R}^{n},%
\mathbb{R}^{I}\right) }^{2}A_{1}\left( x_{0}\right) }{\left[ \dist\left(
\left( x_{0},\cdots ,x_{0}\right) ,\underline{E}^{+}\left( x_{0}\right)
\right) \right] ^{K}}\cdot \max_{i=1,\cdots ,k}\left\vert
x_{i}-x_{0}\right\vert ^{2}\text{.}
\end{eqnarray*}
\end{itemize}

This completes our analysis of Case 1'.

For fixed $\left( x_{0},\underline{x}_{1},\cdots ,\underline{x}_{k}\right)
\in \underline{E}^{+}$ and 
\begin{equation*}
\left( P_{0},P_{1},\cdots ,P_{k}\right) \in \mathcal{P}^{\left( m\right)
}\left( \mathbb{R}^{n},\mathbb{R}^{D}\right) \oplus \underset{k\text{ copies}%
}{\underbrace{\mathcal{P}^{\left( \bar{\bar{m}}\right) }\left( \mathbb{R}%
^{n},\mathbb{R}^{I}\right) \oplus \cdots \oplus \mathcal{P}^{\left( \bar{%
\bar{m}}\right) }\left( \mathbb{R}^{n},\mathbb{R}^{I}\right) }},
\end{equation*}%
we ask whether 
\begin{equation}
\sup \left\{ 
\begin{array}{c}
Q\left( x_{0},P_{0},x_{1},\pi _{x_{1}}^{\bar{\bar{m}}\rightarrow \bar{m}%
}P_{1},\cdots ,x_{k},\pi _{x_{k}}^{\bar{\bar{m}}\rightarrow \bar{m}%
}P_{k},z_{1},\cdots ,z_{L}\right) : \\ 
\left( x_{1},\cdots ,x_{k},z_{1},\cdots ,z_{L}\right) \in E^{++}(x_{0}), \\ 
\left( x_{1},\cdots ,x_{k}\right) \in E^{+}(x_{0})\setminus \underline{E}%
^{+}(x_{0}), \\ 
\dist\left( \left( x_{1},\cdots ,x_{k}\right) ,\underline{E}^{+}\left(
x_{0}\right) \right) =\left\vert \left( x_{1},\cdots ,x_{k}\right) -\left( 
\underline{x}_{1},\cdots ,\underline{x}_{k}\right) \right\vert%
\end{array}%
\right\} <\infty .  \label{P25}
\end{equation}

Note: In (\ref{P25}), $\left( x_{0},\underline{x}_{1},\cdots ,\underline{x}%
_{k}\right) $ are held fixed, i.e., the sup is over $z_{1},\cdots
,z_{L},x_{1},\cdots ,x_{k}$ satisfying the constraints. The sup is taken to
be $0$ if the the set of points satisfying the constraints is empty.

Observe that the set of all $(x_{0},\underline{x}_{1},\cdots ,\underline{x}%
_{k},P_{0},P_{1},\cdots ,P_{k})$ satisfying (\ref{P25}) is semialgebraic,
since $Q$, $E^{++}$, $E^{+}$, $\underline{E}^{+}$ are semialgebraic.
Moreover, since 
\begin{equation*}
\left( P_{0},P_{1},\cdots ,P_{k}\right) \mapsto Q\left(
x_{0},P_{0},x_{1},\pi _{x_{1}}^{\bar{\bar{m}}\rightarrow \bar{m}%
}P_{1},\cdots ,x_{k},\pi _{x_{k}}^{\bar{\bar{m}}\rightarrow \bar{m}%
}P_{k},z_{1},\cdots ,z_{L}\right)
\end{equation*}%
is a nonnegative quadratic form for fixed $x_{0},x_{1},\cdots ,x_{k}$, $%
z_{1},\cdots ,z_{L}$, it follows that the set of all $\left(
P_{0},P_{1},\cdots ,P_{k}\right) $ satisfying (\ref{P25}) is a vector
subspace of 
\begin{equation*}
\mathcal{P}^{\left( m\right) }\left( \mathbb{R}^{n},\mathbb{R}^{D}\right)
\oplus \underset{k\text{ copies}}{\underbrace{\mathcal{P}^{\left( \bar{\bar{m%
}}\right) }\left( \mathbb{R}^{n},\mathbb{R}^{I}\right) \oplus \cdots \oplus 
\mathcal{P}^{\left( \bar{\bar{m}}\right) }\left( \mathbb{R}^{n},\mathbb{R}%
^{I}\right) }},
\end{equation*}%
for fixed $\left( x_{0},\underline{x}_{1},\cdots ,\underline{x}_{k}\right)
\in \underline{E}^{+}$. We denote this vector subspace by 
\begin{equation*}
\hat{H}^{\text{bdd}}\left( x_{0},\underline{x}_{1},\cdots ,\underline{x}%
_{k}\right) \text{.}
\end{equation*}%
Thus,

\begin{itemize}
\item[\refstepcounter{equation}\text{(\theequation)}\label{P26}] $\hat{H}^{%
\text{bdd}}\left( x_{0},\underline{x}_{1},\cdots ,\underline{x}_{k}\right)
\subset \mathcal{P}^{\left( m\right) }\left( \mathbb{R}^{n},\mathbb{R}%
^{D}\right) \oplus \underset{k\text{ copies}}{\underbrace{\mathcal{P}%
^{\left( \bar{\bar{m}}\right) }\left( \mathbb{R}^{n},\mathbb{R}^{I}\right)
\oplus \cdots \oplus \mathcal{P}^{\left( \bar{\bar{m}}\right) }\left( 
\mathbb{R}^{n},\mathbb{R}^{I}\right) }}$ is a vector subspace depending
semialgebraically on $\left( x_{0},\underline{x}_{1},\cdots ,\underline{x}%
_{k}\right) \in \underline{E}^{+}$,
\end{itemize}

and

\begin{itemize}
\item[\refstepcounter{equation}\text{(\theequation)}\label{P27}] for every
given $\left( x_{0},\underline{x}_{1},\cdots ,\underline{x}_{k}\right) \in 
\underline{E}^{+}$ and 
\begin{equation*}
\left( P_{0},P_{1},\cdots ,P_{k}\right) \in \mathcal{P}^{\left( m\right)
}\left( \mathbb{R}^{n},\mathbb{R}^{D}\right) \oplus \underset{k\text{ copies}%
}{\underbrace{\mathcal{P}^{\left( \bar{\bar{m}}\right) }\left( \mathbb{R}%
^{n},\mathbb{R}^{I}\right) \oplus \cdots \oplus \mathcal{P}^{\left( \bar{%
\bar{m}}\right) }\left( \mathbb{R}^{n},\mathbb{R}^{I}\right) }},
\end{equation*}%
condition (\ref{P25}) holds if and only if 
\begin{equation*}
\left( P_{0},P_{1},\cdots ,P_{k}\right) \in \hat{H}^{\text{bdd}}\left( x_{0},%
\underline{x}_{1},\cdots ,\underline{x}_{k}\right) \text{.}
\end{equation*}
\end{itemize}

Given $\left( x_{0},\underline{x}_{1},\cdots ,\underline{x}_{k}\right) \in 
\underline{E}^{+}$ and $\left( P_{0},P_{1},\cdots ,P_{k}\right) \in \hat{H}^{%
\text{bdd}}\left( x_{0},\underline{x}_{1},\cdots ,\underline{x}_{k}\right) $%
, we define 
\begin{equation*}
\norm(P_{0},\cdots ,P_{k};x_{0},\underline{x}_{1},\cdots ,\underline{x}_{k})
\end{equation*}%
to be the square root of the sup in \eqref{P25}.

Note that 
\begin{eqnarray*}
&&\norm\left( P_{0}+P_{0}^{\prime },P_{1}+P_{1}^{\prime },\cdots
,P_{k}+P_{k}^{\prime };x_{0},\underline{x}_{1},\cdots ,\underline{x}%
_{k}\right) \\
&\leq &\norm\left( P_{0},P_{1},\cdots ,P_{k};x_{0},\underline{x}_{1},\cdots ,%
\underline{x}_{k}\right) \\
&&+\norm\left( P_{0}^{\prime },P_{1}^{\prime },\cdots ,P_{k}^{\prime };x_{0},%
\underline{x}_{1},\cdots ,\underline{x}_{k}\right)
\end{eqnarray*}%
for any $\left( P_{0},P_{1},\cdots ,P_{k}\right) $, $\left( P_{0}^{\prime
},\cdots ,P_{k}^{\prime }\right) \in \hat{H}^{\text{bdd}}\left( x_{0},%
\underline{x}_{1},\cdots ,\underline{x}_{k}\right) $ and any $\left( x_{0},%
\underline{x}_{1},\cdots ,\underline{x}_{k}\right) \in \underline{E}^{+}$,
because 
\begin{equation*}
Q\left( x_{0},P_{0},x_{1},P_{1},\cdots ,x_{k},P_{k},z_1,\cdots, z_L\right)
\end{equation*}
is a positive semidefinite quadratic form in $\left( P_{0},P_{1},\cdots
,P_{k}\right) $ for fixed $\left( x_{0},x_{1},\cdots ,x_{k},z_1, \cdots, z_L
\right) $.

Also, for $\lambda \in \mathbb{R}$ we have 
\begin{eqnarray*}
&&\norm\left( \lambda P_{0},\cdots ,\lambda P_{k};x_{0},\underline{x}%
_{1},\cdots ,\underline{x}_{k}\right) \\
&=&\left\vert \lambda \right\vert \norm\left( P_{0},\cdots ,P_{k}; x_0, 
\underline{x}_1,\cdots, \underline{x}_k\right) \text{.}
\end{eqnarray*}

Thus, for fixed $(x_0, \underline{x}_1,\cdots, \underline{x}_k) \in 
\underline{E}^{+}$, the function $(P_0, P_1, \cdots, P_k) \mapsto \norm(P_0,
P_1, \cdots, P_k; x_0, \underline{x}_1,\cdots, \underline{x}_k)$ is a
seminorm on the finite-dimensional vector space $\hat{H}^{\text{bdd}}(x_0,%
\underline{x}_1,\cdots, \underline{x}_k)$.

It follows that, for each $\left( x_{0},\underline{x}_{1},\cdots ,\underline{%
x}_{k}\right) \in \underline{E}^{+}$, we have 
\begin{eqnarray}
&&\norm\left( P_{0},P_{1},\cdots ,P_{k};x_{0},\underline{x}_{1},\cdots ,%
\underline{x}_{k}\right)  \notag \\
&\leq &M\left( x_{0},\underline{x}_{1},\cdots ,\underline{x}_{k}\right)
\cdot \left(\sum_{|\alpha| \leq m}\vert \partial^{\alpha} P_0(x_0)\vert^2+
\sum_{i=1}^{k}\sum_{\left\vert \alpha \right\vert \leq \bar{\bar{m}}%
}\left\vert \partial ^{\alpha }P_{i}\left( \underline{x}_{i}\right)
\right\vert ^{2}\right) ^{1/2}  \label{P28}
\end{eqnarray}%
for all $\left( P_{0},P_{1},\cdots ,P_{k}\right) \in \hat{H}^{\text{bdd}%
}\left( x_{0},\underline{x}_{1},\cdots ,\underline{x}_{k}\right) $; here, $%
M\left( x_{0},\underline{x}_{1},\cdots ,\underline{x}_{k}\right) $ is some
finite constant.

Since $\hat{H}^{\text{bdd}}\left( x_{0},\underline{x}_{1},\cdots ,\underline{%
x}_{k}\right) $ depends semialgebraically on $\left( x_{0},\underline{x}%
_{1},\cdots ,\underline{x}_{k}\right) $, and since $\norm\left( P_{0},\cdots
,P_{k};x_{0},\underline{x}_{1},\cdots ,\underline{x}_{k}\right) $ is
semialgebraic in $\left( P_{0},\cdots ,P_{k};x_{0},\underline{x}_{1},\cdots ,%
\underline{x}_{k}\right) $, it follows that the least possible nonnegative $%
M\left( x_{0},\underline{x}_{1},\cdots ,\underline{x}_{k}\right) $ in (\ref%
{P28}) is a semialgebraic function of $\left( x_{0},\underline{x}_{1},\cdots
,\underline{x}_{k}\right) \in \underline{E}^{+}$.

From now on, we define $M\left( x_{0},\underline{x}_{1},\cdots ,\underline{x}%
_{k}\right) $ to be the least possible nonnegative number for which (\ref%
{P28}) holds for all $\left( P_{0},P_{1},\cdots ,P_{k}\right) \in \hat{H}^{%
\text{bdd}}\left( x_{0},\underline{x}_{1},\cdots ,\underline{x}_{k}\right) $.

Thus, $M\left( x_{0},\underline{x}_{1},\cdots ,\underline{x}_{k}\right) $ is
a semialgebraic function; and from (\ref{P28}), we obtain the estimate 
\begin{eqnarray}
&&Q\left( x_{0},P_{0},x_{1},\pi _{x_{1}}^{\bar{\bar{m}}\rightarrow \bar{m}%
}P_{1},\cdots ,x_{k},\pi _{x_{k}}^{\bar{\bar{m}}\rightarrow \bar{m}%
}P_{k};z_{1},\cdots ,z_{L}\right)  \notag \\
&\leq &M^{2}\left( x_{0},\underline{x}_{1},\cdots ,\underline{x}_{k}\right)
\cdot \left( \sum_{|\alpha |\leq m}|\partial ^{\alpha
}P_{0}(x_{0})|^{2}+\sum_{i=1}^{k}\sum_{\left\vert \alpha \right\vert \leq 
\bar{\bar{m}}}\left\vert \partial ^{\alpha }P_{i}\left( \underline{x}%
_{i}\right) \right\vert ^{2}\right) ,  \label{P29}
\end{eqnarray}%
whenever 
\begin{subequations}
\begin{align}
& (x_{0},\underline{x}_{1},\cdots ,\underline{x}_{k})\in \underline{E}^{+},
\label{P30a} \\
& (x_{0},x_{1},\cdots ,x_{k},z_{1},\cdots ,z_{L})\in E^{++},  \label{P30b} \\
& \dist\left( \left( x_{1},\cdots ,x_{k}\right) ,\underline{E}^{+}\left(
x_{0}\right) \right) =\left\vert \left( x_{1},\cdots ,x_{k}\right) -\left( 
\underline{x}_{1},\cdots ,\underline{x}_{k}\right) \right\vert >0,
\label{P30c} \\
& (P_{0},P_{1},\cdots ,P_{k})\in \hat{H}^{\text{bdd}}\left( x_{0},\underline{%
x}_{1},\cdots ,\underline{x}_{k}\right) \text{.}  \label{P30d}
\end{align}%
Now, for $\left( x_{0},\underline{x}_{1},\cdots ,\underline{x}_{k}\right)
\in \underline{E}^{+}$, we let 
\end{subequations}
\begin{equation*}
\Pi _{\left( x_{0},\underline{x}_{1},\cdots ,\underline{x}_{k}\right) }
\end{equation*}%
denote the orthogonal projection from 
\begin{equation*}
\mathcal{P}^{\left( m\right) }\left( \mathbb{R}^{n},\mathbb{R}^{D}\right)
\oplus \underset{k\text{ copies}}{\underbrace{\mathcal{P}^{\left( \bar{\bar{m%
}}\right) }\left( \mathbb{R}^{n},\mathbb{R}^{I}\right) \oplus \cdots \oplus 
\mathcal{P}^{\left( \bar{\bar{m}}\right) }\left( \mathbb{R}^{n},\mathbb{R}%
^{I}\right) }}
\end{equation*}%
onto 
\begin{equation*}
\hat{H}^{\text{bdd}}\left( x_{0},\underline{x}_{1},\cdots ,\underline{x}%
_{k}\right) ,
\end{equation*}%
with respect to the quadratic form $\sum_{|\alpha |\leq m}|\partial ^{\alpha
}P_{0}(x_{0})|^{2}+\sum_{i=1}^{k}\sum_{\left\vert \alpha \right\vert \leq 
\bar{\bar{m}}}\left\vert \partial ^{\alpha }P_{i}\left( \underline{x}%
_{i}\right) \right\vert ^{2}$.

If 
\begin{equation*}
\left( \hat{P}_{0},\hat{P}_{1},\cdots ,\hat{P}_{k}\right) =\Pi _{\left(
x_{0},\underline{x}_{1},\cdots ,\underline{x}_{k}\right) }\left(
P_{0},P_{1},\cdots ,P_{k}\right) ,
\end{equation*}%
then the following hold. 
\begin{eqnarray}
\sum_{|\alpha| \leq m} \vert \partial^{\alpha} \hat{P}_0 (x_0)
\vert^2+\sum_{i=1}^{k}\sum_{\left\vert \alpha \right\vert \leq \bar{\bar{m}}%
}\left\vert \partial ^{\alpha }\hat{P}_{i}\left( \underline{x}_{i}\right)
\right\vert^2 \leq \sum_{|\alpha| \leq m} \vert \partial^{\alpha} P_0 (x_0)
\vert^2+\sum_{i=1}^{k}\sum_{\left\vert \alpha \right\vert \leq \bar{\bar{m}}%
}\left\vert \partial ^{\alpha }P_{i}\left( \underline{x}_{i}\right)
\right\vert^2,  \label{P31}
\end{eqnarray}
\begin{eqnarray}
\left( \hat{P}_{0},\hat{P}_{1},\cdots ,\hat{P}_{k}\right) &\in &\hat{H}^{%
\text{bdd}}\left( x_{0},\underline{x}_{1},\cdots ,\underline{x}_{k}\right) ,
\label{P32}
\end{eqnarray}%
\begin{equation}
\left( \hat{P}_{0},\hat{P}_{1},\cdots ,\hat{P}_{k}\right) =\left(
P_{0},P_{1},\cdots ,P_{k}\right) \text{ if }\left( P_{0},P_{1},\cdots
,P_{k}\right) \in \hat{H}^{\text{bdd}}\left( x_{0},\underline{x}_{1},\cdots ,%
\underline{x}_{k}\right).  \label{P33}
\end{equation}

Also, $\Pi _{\left( x_{0},\underline{x}_{1},\cdots ,\underline{x}_{k}\right)
}\left( P_{0},P_{1},\cdots ,P_{k}\right) $ is given by a semialgebraic map%
\begin{equation*}
\left( x_{0},\underline{x}_{1},\cdots ,\underline{x}_{k},P_{0},P_{1},\cdots
,P_{k}\right) \mapsto \left( \hat{P}_{0},\hat{P}_{1},\cdots ,\hat{P}%
_{k}\right) \text{.}
\end{equation*}

We prepare to invoke the induction hypothesis (Propositions \ref{SAQF1} and %
\ref{SAQF2} hold for $\dim E^{+}<\Delta $).

Let 
\begin{equation}
\hat{E}^{++}=\left\{ 
\begin{array}{c}
\left( x_{0},\underline{x}_{1},\cdots ,\underline{x}_{k},z_{1},\cdots
,z_{L},x_{1},\cdots ,x_{k}\right) \in \mathbb{R}^{n}\times \cdots \times 
\mathbb{R}^{n}: \\ 
\left( x_{0},x_{1},\cdots ,x_{k},z_{1},\cdots ,z_{L}\right) \in E^{++}, \\ 
\left( x_{0},\underline{x}_{1},\cdots ,\underline{x}_{k}\right) \in 
\underline{E}^{+}, \\ 
\dist\left( \left( x_{1},\cdots ,x_{k}\right) ,\underline{E}^{+}\left(
x_{0}\right) \right) =\left\vert \left( x_{1},\cdots ,x_{k}\right) -\left( 
\underline{x}_{1},\cdots ,\underline{x}_{k}\right) \right\vert \not=0%
\end{array}%
\right\} ,  \label{P34}
\end{equation}%
\begin{equation}
\hat{E}^{+}=\underline{E}^{+},  \label{P35}
\end{equation}%
\begin{equation}
\hat{E}=\left\{ x_{0}:\left( x_{0},\underline{x}_{1},\cdots ,\underline{x}%
_{k}\right) \in \underline{E}^{+}\text{ for some }\underline{x}_{1},\cdots ,%
\underline{x}_{k}\right\} \text{.}  \label{P36}
\end{equation}

\begin{itemize}
\item[\refstepcounter{equation}\text{(\theequation)}\label{P37}] For $\left(
x_{0},\underline{x}_{1},\cdots ,\underline{x}_{k},z_{1},\cdots
,z_{L},x_{1},\cdots ,x_{k}\right) \in \hat{E}^{++}$ and 
\begin{equation*}
\left( P_{0},P_{1},\cdots ,P_{k}\right) \in \mathcal{P}^{\left( m\right)
}\left( \mathbb{R}^{n},\mathbb{R}^{D}\right) \oplus \underset{k\text{ copies}%
}{\underbrace{\mathcal{P}^{\left( \bar{\bar{m}}\right) }\left( \mathbb{R}%
^{n},\mathbb{R}^{I}\right) \oplus \cdots \oplus \mathcal{P}^{\left( \bar{%
\bar{m}}\right) }\left( \mathbb{R}^{n},\mathbb{R}^{I}\right) }},
\end{equation*}%
define 
\begin{equation*}
\left( \hat{P}_{0},\hat{P}_{1},\cdots ,\hat{P}_{k}\right) =\Pi _{\left(
x_{0},\underline{x}_{1},\cdots ,\underline{x}_{k}\right) }\left(
P_{0},P_{1},\cdots ,P_{k}\right)
\end{equation*}%
and then set 
\begin{eqnarray*}
&&\hat{Q}\left( x_{0},P_{0},\underline{x}_{1},P_{1},\cdots ,\underline{x}%
_{k},P_{k},z_{1},\cdots ,z_{L},x_{1},\cdots ,x_{k}\right) \\
&=&Q\left( x_{0},\hat{P}_{0},x_{1},\pi _{x_{1}}^{\bar{\bar{m}}\rightarrow 
\bar{m}}\hat{P}_{1},\cdots ,x_{k},\pi _{x_{k}}^{\bar{\bar{m}}\rightarrow 
\bar{m}}\hat{P}_{k},z_{1},\cdots ,z_{L}\right) .
\end{eqnarray*}
\end{itemize}

Here, $\left( \underline{x}_{1},\cdots ,\underline{x}_{k}\right) $ plays the
role of $\left( x_{1},\cdots ,x_{k}\right) $ in Section \ref{section-setup},
and $\left( z_{1},\cdots ,z_{L},x_{1},\cdots ,x_{k}\right) $ plays the role
of $\left( z_{1},\cdots ,z_{L}\right) $ in Section \ref{section-setup}.

For $\left( x_{0},\underline{x}_{1},\cdots ,\underline{x}_{k}\right) \in 
\hat{E}^{+}=\underline{E}^{+}$, we define 
\begin{equation}
\hat{A}\left( x_{0},\underline{x}_{1},\cdots ,\underline{x}_{k}\right)
=\left( M\left( x_{0},\underline{x}_{1},\cdots ,\underline{x}_{k}\right)
\right) ^{2}  \label{P38}
\end{equation}%
with $M\left( x_{0},\underline{x}_{1},\cdots ,\underline{x}_{k}\right) $ as
in (\ref{P29}).

We check that $\hat{E}$, $\hat{E}^{+}$, $\hat{E}^{++}$, $\hat{Q}$, $\hat{A}$
satisfy hypotheses (\ref{PSQF1}), $\cdots $,(\ref{PSQF8}) in Section \ref%
{section-setup}, and that $\dim \hat{E}^{+}<\Delta $. This will allow us to
apply Propositions \ref{SAQF1} and \ref{SAQF2} to $\hat{E}$, $\hat{E}^{+}$, $%
\hat{E}^{++}$, $\hat{Q}$, $\hat{A}$.

Hypothesis (\ref{PSQF1}) for $\hat{E}$, $\hat{E}^{+}$, $\hat{E}^{++}$, $\hat{%
Q}$, $\hat{A}$ simply asserts that 
\begin{equation*}
\hat{E}\subset \mathbb{R}^{n}\text{, }\hat{E}^{+}\subset \hat{E}\times 
\underset{k\text{ copies}}{\underbrace{\mathbb{R}^{n}\times \cdots \times 
\mathbb{R}^{n}}}\text{ , }\hat{E}^{++}\subset \hat{E}^{+}\times \mathbb{R}%
^{n}\times \cdots \times \mathbb{R}^{n}\text{.}
\end{equation*}%
From (\ref{P35}) and (\ref{P36}), we see that $\hat{E}\subset \mathbb{R}^{n}$
and $\hat{E}^{+}\subset \hat{E}\times \mathbb{R}^{n}\times \cdots \times 
\mathbb{R}^{n} $. Also (\ref{P34}) and (\ref{P35}) show that $\hat{E}%
^{++}\subset \hat{E}^{+}\times \mathbb{R}^{n}\times \cdots \times \mathbb{R}%
^{n}$. Thus, Hypothesis (\ref{PSQF1}) holds for $\hat{E}$, $\hat{E}^{+}$, $%
\hat{E}^{++}$, $\hat{Q}$, $\hat{A}$.

Hypothesis (\ref{PSQF2}) for $\hat{E}$, $\hat{E}^{+}$, $\hat{E}^{++}$, $\hat{%
Q}$, $\hat{A}$ asserts that $\hat{E}$, $\hat{E}^{+}$, $\hat{E}^{++}$ are
semialgebraic, which follows from (\ref{P3}), (\ref{P34}), (\ref{P35}), (\ref%
{P36}), since $E^{++}$, $\underline{E}^{++}$ are semialgebraic.

Hypothesis (\ref{PSQF3}) for $\hat{E}$, $\hat{E}^{+}$, $\hat{E}^{++}$, $\hat{%
Q}$, $\hat{A}$ asserts that $\hat{E}$ and $\hat{E}^{+}$ are compact, which
follows from (\ref{P35}), (\ref{P36}) and the compactness of $\underline{E}%
^{+}$.

Hypothesis (\ref{PSQF4}) for $\hat{E}$, $\hat{E}^{+}$, $\hat{E}^{++}$, $\hat{%
Q}$, $\hat{A}$ asserts that $\bar{\bar{m}}\geq m \geq 0 $ and $D, I\geq 1$,
which we know from our selection of $\bar{\bar{m}}$, and from Hypothesis (%
\ref{PSQF4}) for $E$, $E^{+}$, $E^{++}$, $Q$, $A$.

Hypothesis (\ref{PSQF5}) for $\hat{E}$, $\hat{E}^{+}$, $\hat{E}^{++}$, $\hat{%
Q}$, $\hat{A}$ asserts that 
\begin{equation*}
\hat{Q}\left( x_{0},P_{0},\underline{x}_{1},P_{1},\cdots ,\underline{x}%
_{k},P_{k},z_{1},\cdots ,z_{L},x_{1},\cdots ,x_{k}\right)
\end{equation*}%
is a semialgebraic function of $x_{0},\underline{x}_{1},\cdots ,\underline{x}%
_{k},z_{1},\cdots ,z_{L},x_{1},\cdots ,x_{k}$, $P_{0},P_{1},\cdots ,P_{k}$.
This follows from (\ref{P37}), since 
\begin{equation*}
\Pi _{\left( x_{0},\underline{x}_{1},\cdots ,\underline{x}_{k}\right)
}\left( P_{0},P_{1},\cdots ,P_{k}\right)
\end{equation*}%
depends semialgebraically on $x_0, \underline{x}_1, \cdots, \underline{x}%
_k,P_0, P_1,\cdots, P_k$, the projection $\pi _{x_{i}}^{\bar{\bar{m}}%
\rightarrow \bar{m}}$ depends semialgebraically on $x_{i}$ and 
\begin{equation*}
Q\left( x_{0},\hat{P}_{0},x_{1},\hat{P}_{1},\cdots ,x_{k},\hat{P}%
_{k},z_{1},\cdots ,z_{L}\right)
\end{equation*}%
is semialgebraic in $\left( x_{0},x_{1},\cdots ,x_{k},z_{1},\cdots ,z_{L},%
\hat{P}_{0},\hat{P}_{1},\cdots ,\hat{P}_{k}\right) $.

Thus, Hypothesis (\ref{PSQF5}) holds for $\hat{E}$, $\hat{E}^{+}$, $\hat{E}%
^{++}$, $\hat{Q}$, $\hat{A}$.

Hypothesis (\ref{PSQF6}) for $\hat{E}$, $\hat{E}^{+}$, $\hat{E}^{++}$, $\hat{%
Q}$, $\hat{A}$ asserts that for fixed $x_{0},\underline{x}_{1},\cdots ,%
\underline{x}_{k},z_{1},\cdots ,z_{L},x_{1},\cdots ,x_{k}$, the map%
\begin{equation*}
\left( P_{0},P_{1},\cdots ,P_{k}\right) \mapsto \hat{Q}\left( x_{0},P_{0},%
\underline{x}_{1},P_{1},\cdots ,\underline{x}_{k},P_{k},z_{1},\cdots
,z_{L},x_{1},\cdots ,x_{k}\right)
\end{equation*}%
is a positive semidefinite quadratic form. This follows from (\ref{P37}) and
Hypothesis (\ref{PSQF6}) for $E$, $E^{+}$, $E^{++}$, $Q$, $A$, since the maps%
\begin{equation*}
\Pi _{\left( x_{0},\underline{x}_{1},\cdots ,\underline{x}_{k}\right) }\text{
and }\pi _{x_{i}}^{\bar{\bar{m}}\rightarrow \bar{m}}\text{ (for }i=1,\cdots
,k\text{) are linear}
\end{equation*}%
for fixed $x_{0},\underline{x}_{1},\cdots ,\underline{x}_{k},z_{1},\cdots
,z_{L},x_{1},\cdots ,x_{k}$.

Thus, Hypothesis (\ref{PSQF6}) holds for $\hat{E}$, $\hat{E}^{+}$, $\hat{E}%
^{++}$, $\hat{Q}$, $\hat{A}$.

Hypothesis (\ref{PSQF7}) for $\hat{E}$, $\hat{E}^{+}$, $\hat{E}^{++}$, $\hat{%
Q}$, $\hat{A}$ asserts that 
\begin{equation*}
\hat{A}\left( x_{0},\underline{x}_{1},\cdots ,\underline{x}_{k}\right)
\end{equation*}%
is a nonnegative semialgebraic function.

This is immediate from (\ref{P38}), since $M\left( x_{0},\underline{x}%
_{1},\cdots ,\underline{x}_{k}\right) $ is a semialgebraic function.

Hypothesis (\ref{PSQF8}) for $\hat{E}$, $\hat{E}^{+}$, $\hat{E}^{++}$, $\hat{%
Q}$, $\hat{A}$ asserts that 
\begin{eqnarray*}
&&\hat{Q}\left( x_{0},P_{0},\underline{x}_{1},P_{1},\cdots ,\underline{x}%
_{k},P_{k},z_{1},\cdots ,z_{L},x_{1},\cdots ,x_{k}\right) \\
&\leq &\hat{A}\left( x_{0},\underline{x}_{1},\cdots ,\underline{x}%
_{k}\right) \cdot \left(\sum_{|\alpha| \leq m} \vert \partial^{\alpha} P_0
(x_0) \vert^2+\sum_{i=1}^{k}\sum_{\left\vert \alpha \right\vert \leq \bar{%
\bar{m}}}\left\vert \partial ^{\alpha }P_{i}\left( \underline{x}_{i}\right)
\right\vert^2 \right)
\end{eqnarray*}%
for $\left( x_{0},\underline{x}_{1},\cdots ,\underline{x}_{k},z_{1},\cdots
,z_{L},x_{1},\cdots ,x_{k}\right) \in \hat{E}^{++}$.

By definitions (\ref{P37}), (\ref{P38}), this means that 
\begin{eqnarray}
&&Q\left( x_{0},\hat{P}_{0},x_{1},\pi _{x_{1}}^{\bar{\bar{m}}\rightarrow 
\bar{m}}\hat{P}_{1},\cdots ,x_{k},\pi _{x_{k}}^{\bar{\bar{m}}\rightarrow 
\bar{m}}\hat{P}_{k},z_{1},\cdots ,z_{L}\right)  \notag \\
&\leq &\left( M\left( x_{0},\underline{x}_{1},\cdots ,\underline{x}%
_{k}\right) \right) ^{2}\cdot \left(\sum_{|\alpha| \leq m} \vert
\partial^{\alpha} P_0 (x_0) \vert^2+\sum_{i=1}^{k}\sum_{\left\vert \alpha
\right\vert \leq \bar{\bar{m}}}\left\vert \partial ^{\alpha }P_{i}\left( 
\underline{x}_{i}\right) \right\vert^2 \right)  \label{P39}
\end{eqnarray}%
for $\left( x_{0},\underline{x}_{1},\cdots ,\underline{x}_{k},z_{1},\cdots
,z_{L},x_{1},\cdots ,x_{k}\right) \in \hat{E}^{++}$, where 
\begin{equation*}
\left( \hat{P}_{0},\hat{P}_{1},\cdots ,\hat{P}_{k}\right) =\Pi _{\left(
x_{0},\underline{x}_{1},\cdots ,\underline{x}_{k}\right) }\left( {P}_{0},{P}%
_{1},\cdots ,{P}_{k}\right) \text{.}
\end{equation*}%
Let us verify that (\ref{P39}) holds.

Let $\left( x_{0},\underline{x}_{1},\cdots ,\underline{x}_{k},z_{1},\cdots
,z_{L},x_{1},\cdots ,x_{k}\right) \in \hat{E}^{++}$, and let 
\begin{equation*}
P_{0}\in \mathcal{P}^{\left( m\right) }\left( \mathbb{R}^{n},\mathbb{R}%
^{D}\right) \text{, }P_{1},\cdots ,P_{k}\in \mathcal{P}^{\left( \bar{\bar{m}}%
\right) }\left( \mathbb{R}^{n},\mathbb{R}^{I}\right) \text{.}
\end{equation*}

By definition (\ref{P34}), and by Hypothesis (\ref{PSQF1}) for $E$, $E^{+}$%
, $E^{++}$, $Q$, $A$, we see that (\ref{P30a})$,\cdots $, (\ref{P30c}) hold.
Moreover, 
\begin{equation*}
\left( \hat{P}_{0},\hat{P}_{1},\cdots ,\hat{P}_{k}\right) =\Pi _{\left(
x_{0},\underline{x}_{1},\cdots ,\underline{x}_{k}\right) }\left(
P_{0},P_{1},\cdots ,P_{k}\right)
\end{equation*}%
belongs to $\hat{H}^{\text{bdd}}\left( x_{0},\underline{x}_{1},\cdots ,%
\underline{x}_{k}\right) $, by (\ref{P32}).

Hence, (\ref{P30a})$,\cdots $, (\ref{P30d}) all hold with $\left( \hat{P}%
_{0},\hat{P}_{1},\cdots ,\hat{P}_{k}\right) $ in place of $\left(
P_{0},\cdots ,P_{k}\right) ;$ from which we obtain, via (\ref{P29}), that 
\begin{eqnarray*}
&&Q\left( x_{0},\hat{P}_{0},x_{1},\pi _{x_{1}}^{\bar{\bar{m}}\rightarrow 
\bar{m}}\hat{P}_{1},\cdots ,x_{k},\pi _{x_{k}}^{\bar{\bar{m}}\rightarrow 
\bar{m}}\hat{P}_{k},z_{1},\cdots ,z_{L}\right) \\
&\leq &\left( M\left( x_{0},\underline{x}_{1},\cdots ,\underline{x}%
_{k}\right) \right) ^{2}\cdot \left(\sum_{|\alpha| \leq m} \vert
\partial^{\alpha} \hat{P}_0 (x_0) \vert^2+\sum_{i=1}^{k}\sum_{\left\vert
\alpha \right\vert \leq \bar{\bar{m}}}\left\vert \partial ^{\alpha }\hat{P}%
_{i}\left( \underline{x}_{i}\right) \right\vert^2 \right)\text{.}
\end{eqnarray*}%
Together with (\ref{P31}), this implies (\ref{P39}), completing the proof of
Hypothesis (\ref{PSQF8}) for $\hat{E}$, $\hat{E}^{+}$, $\hat{E}^{++}$, $\hat{%
Q}$, $\hat{A}$.

Thus, Hypotheses (\ref{PSQF1}), $\cdots $, (\ref{PSQF8}) all hold for $\hat{E%
}$, $\hat{E}^{+}$, $\hat{E}^{++}$, $\hat{Q}$, $\hat{A}$. Moreover, $\dim 
\hat{E}^{+}=\dim \underline{E}^{+}<\Delta $; see (\ref{P35}) and (\ref{P1}).

So we may apply Propositions \ref{SAQF1} and \ref{SAQF2} to $\hat{E}$, $\hat{%
E}^{+}$, $\hat{E}^{++}$, $\hat{Q}$, $\hat{A}$.

Thus, we learn the following.

Proposition \ref{SAQF1} for $\hat{E}$, $\hat{E}^{+}$, $\hat{E}^{++}$, $\hat{Q%
}$, $\hat{A}:$ There exist $\hat{m}\geq \bar{\bar{m}}$, and a computable family of vector spaces

\begin{equation*}
\check{H}^{\text{bdd}}\left( x_{0},\underline{x}_{1},\cdots ,\underline{x}%
_{k}\right) \subset \mathcal{P}^{\left( m\right) }\left( \mathbb{R}^{n},%
\mathbb{R}^{D}\right) \oplus \underset{k\text{ copies}}{\underbrace{\mathcal{%
P}^{\left( \hat{m}\right) }\left( \mathbb{R}^{n},\mathbb{R}^{I}\right)
\oplus \cdots \oplus \mathcal{P}^{\left( \hat{m}\right) }\left( \mathbb{R}%
^{n},\mathbb{R}^{I}\right) }},
\end{equation*}%
depending semialgebraically on $\left( x_{0},\underline{x}_{1},\cdots ,%
\underline{x}_{k}\right) \in \hat{E}^{+}=\underline{E}^{+}$, such that the
following holds.

For $x_0 \in \hat{E}$, define 
\begin{equation}
\hat{E}^+(x_0) = \{(\underline{x}_1, \cdots, \underline{x}_k) \in \mathbb{R}%
^n \times \cdots \times \mathbb{R}^n: (x_0,\underline{x}_1, \cdots, 
\underline{x}_k) \in \hat{E}^+\}  \label{p.16.a}
\end{equation}
and 
\begin{equation}
\hat{E}^{++}(x_0) = \{(\underline{x}_1, \cdots, \underline{x}_k,z_1,\cdots,
z_L, x_1,\cdots, x_k) \in \mathbb{R}^n \times \cdots \times \mathbb{R}^n:
(x_0,\underline{x}_1, \cdots, \underline{x}_k,z_1,\cdots, z_L, x_1,\cdots,
x_k) \in \hat{E}^{++}\}.  \label{p.16.b}
\end{equation}

Let $x_{0}\in \hat{E}$, $P_{0}\in \mathcal{P}^{(m)}\left( \mathbb{R}^{n},%
\mathbb{R}^{D}\right) $, $F\in C_{0}^{\infty }\left( \mathbb{R}^{n},\mathbb{R%
}^{I}\right) $ be given. Then 
\begin{subequations}
\label{P40}
\begin{align}
& \sup \left\{ 
\begin{array}{c}
\hat{Q}\left( x_{0},P_{0},\underline{x}_{1},J_{\underline{x}_{1}}^{\left( 
\bar{\bar{m}}\right) }F,\cdots ,\underline{x}_{k},J_{\underline{x}%
_{k}}^{\left( \bar{\bar{m}}\right) }F,z_{1},\cdots ,z_{L},x_{1},\cdots
,x_{k}\right) : \\ 
\left( \underline{x}_{1},\cdots ,\underline{x}_{k},z_{1},\cdots
,z_{L},x_{1},\cdots ,x_{k}\right) \in \hat{E}^{++}(x_{0})%
\end{array}%
\right\} <\infty  \label{P40a} \\
& \text{if and only if }  \notag \\
& \left( P_{0},J_{\underline{x}_{1}}^{\left( \hat{m}\right) }F,\cdots ,J_{%
\underline{x}_{k}}^{\left( \hat{m}\right) }F\right) \in \check{H}^{\text{bdd}%
}\left( x_{0},\underline{x}_{1},\cdots ,\underline{x}_{k}\right)
\label{P40b}
\end{align}%
for all $\left( \underline{x}_{1},\cdots ,\underline{x}_{k}\right) \in \hat{E%
}^{+}(x_{0})=\underline{E}^{+}(x_{0})$.

Proposition \ref{SAQF2} for $\hat{E}$, $\hat{E}^{+}$, $\hat{E}^{++}$, $\hat{Q%
}$, $\hat{A}:$ There exist $\hat{m}^{+}\geq \bar{\bar{m}}$, and a computable family of vector spaces 
\end{subequations}
\begin{equation*}
\check{H}^{\lim }\left( x_{0}\right) \subset \mathcal{P}^{\left( m\right)
}\left( \mathbb{R}^{n},\mathbb{R}^{D}\right) \oplus \mathcal{P}^{\left( \hat{%
m}^{+}\right) }\left( \mathbb{R}^{n},\mathbb{R}^{I}\right) ,
\end{equation*}%
depending semialgebraically on $x_{0}\in \hat{E}$, such that the following
holds.

Let $x_{0}\in \hat{E}$, $P_{0}\in \mathcal{P}^{\left( m\right) }\left( 
\mathbb{R}^{n},\mathbb{R}^{D}\right) $, $F\in C_{0}^{\infty }\left( \mathbb{R%
}^{n},\mathbb{R}^{I}\right) $ be given. Assume that condition (\ref{P40a})
holds. Then

\begin{subequations}
\label{P41}
\begin{align}
& \underset{\left(\underline{x}_{1},\cdots ,\underline{x}_{k},z_{1},\cdots
,z_{L},x_{1},\cdots ,x_{k}\right) \in \hat{E}^{++}(x_0)}{\lim_{\underline{x}%
_{1},\cdots ,\underline{x}_{k},z_{1},\cdots ,z_{L},x_{1},\cdots
,x_{k}\rightarrow x_{0}}}\hat{Q}(x_{0},P_{0},\underline{x}_{1},J_{\underline{%
x}_{1}}^{\left( \bar{\bar{m}}\right) }F,\cdots ,\underline{x}_{k},J_{%
\underline{x}_{k}}^{\left( \bar{\bar{m}}\right) }F,z_{1},\cdots
,z_{L},x_{1},\cdots ,x_{k})=0  \label{P41a} \\
& \text{if and only if }  \notag \\
& \left( P_{0},J_{\underline{x}_{0}}^{\left( \hat{m}^{+}\right) }F\right)
\in \check{H}^{\lim }\left( x_{0}\right) \text{.}  \label{P41b}
\end{align}

Now let $x_{0}\in E$, $P_{0}\in \mathcal{P}^{\left( m\right) }\left( \mathbb{%
R}^{n},\mathbb{R}^{D}\right) $, $F\in C_{0}^{\infty }\left( \mathbb{R}^{n},%
\mathbb{R}^{I}\right) $ be given and suppose that 
\end{subequations}
\begin{equation}
\sup \left\{ 
\begin{array}{c}
Q\left( x_{0},P_{0},x_{1},J_{x_{1}}^{\left( \bar{m}\right) }F,\cdots
,x_{k},J_{x_{k}}^{\left( \bar{m}\right) }F,z_{1},\cdots ,z_{L}\right) : \\ 
\left( x_{1},\cdots ,x_{k},z_{1},\cdots ,z_{L}\right) \in E^{++}(x_{0})%
\end{array}%
\right\} <\infty .  \label{P42}
\end{equation}

(In the above $\sup $, $x_{0}$ has been fixed, while $x_{1},\cdots
,x_{k},z_{1},\cdots ,z_{L}$ vary.)

Recall (\ref{P1}), $\cdots $, (\ref{P9}) and (\ref{P10a}), (\ref{P10b}). We
have the following, from the equivalence of (\ref{P10a}) and (\ref{P10b}):

\begin{itemize}
\item[\refstepcounter{equation}\text{(\theequation)}\label{P43}] If $%
x_{0}\in \underline{E}$, then $\left( P_{0},J_{x_{1}}^{\left( \underline{m}%
^{\prime }\right) }F,\cdots ,J_{x_{k}}^{\left( \underline{m}^{\prime
}\right) }F\right) \in \underline{H}^{\text{bdd}}\left( x_{0},x_{1},\cdots
,x_{k}\right) $ for each $\left( x_{1},\cdots ,x_{k}\right) \in \underline{E}%
^{+}(x_0)$.
\end{itemize}

If $x_{0}\in \hat{E}$, then $\underline{E}^{+}\left( x_{0}\right)
\not=\emptyset $ by (\ref{P36}). Hence, by (\ref{P16}) and (\ref{P42}), we
find that (\ref{P25}) holds for any $\left(\underline{x}_{1},\cdots ,%
\underline{x}_{k}\right) \in \underline{E}^{+}(x_0)$, with $P_{i}:=J_{%
\underline{x}_{i}}^{\left( \bar{\bar{m}}\right) }F$ for $i=1,\cdots ,k$.

(To see this, recall that (\ref{P16}) holds whenever (\ref{P17}) holds, and
note that (\ref{P17}) holds whenever $x_{0},x_{1},\cdots ,x_{k},z_{1},\cdots
,z_{L},\underline{x}_{1},\cdots ,\underline{x}_{k}$ are as in (\ref{P25}).)
Thanks to (\ref{P27}), we therefore learn that

\begin{itemize}
\item[\refstepcounter{equation}\text{(\theequation)}\label{P44}] $\left(
P_{0},J_{\underline{x}_{1}}^{\left( \bar{\bar{m}}\right) }F,\cdots ,J_{%
\underline{x}_{k}}^{\left( \bar{\bar{m}}\right) }F\right) \in \hat{H}^{\text{%
bdd}}\left( x_{0},\underline{x}_{1},\cdots ,\underline{x}_{k}\right) $
whenever $\left( \underline{x}_{1},\cdots ,\underline{x}_{k}\right) \in 
\underline{E}^{+}\left( x_{0}\right) $.
\end{itemize}

Hence, by (\ref{P33}), we have

\begin{itemize}
\item[\refstepcounter{equation}\text{(\theequation)}\label{P45}] $\left(
P_{0},J_{\underline{x}_{1}}^{\left( \bar{\bar{m}}\right) }F,\cdots ,J_{%
\underline{x}_{k}}^{\left( \bar{\bar{m}}\right) }F\right) =\Pi _{\left(
x_{0},\underline{x}_{1},\cdots ,\underline{x}_{k}\right) }\left( P_{0},J_{%
\underline{x}_{1}}^{\left( \bar{\bar{m}}\right) }F,\cdots ,J_{\underline{x}%
_{k}}^{\left( \bar{\bar{m}}\right) }F\right) $ whenever $\left( \underline{x}%
_{1},\cdots ,\underline{x}_{k}\right) \in \underline{E}^{+}\left(
x_{0}\right) $.
\end{itemize}

From (\ref{P37}) and (\ref{P45}), we obtain the following. 
\begin{eqnarray}
&&\hat{Q}\left( x_{0},P_{0},\underline{x}_{1},J_{\underline{x}_{1}}^{\left( 
\bar{\bar{m}}\right) }F,\cdots ,\underline{x}_{k},J_{\underline{x}%
_{k}}^{\left( \bar{\bar{m}}\right) }F,z_{1},\cdots ,z_{L},x_{1},\cdots
,x_{k}\right)  \notag \\
&=&Q\left( x_{0},P_{0},x_{1},\pi _{x_{1}}^{\bar{\bar{m}}\rightarrow \bar{m}%
}J_{\underline{x}_{1}}^{\left( \bar{\bar{m}}\right) }F,\cdots ,x_{k},\pi
_{x_{k}}^{\bar{\bar{m}}\rightarrow \bar{m}}J_{\underline{x}_{k}}^{\left( 
\bar{\bar{m}}\right) }F,z_{1},\cdots ,z_{L}\right) ,  \label{P46}
\end{eqnarray}%
whenever $\left(\underline{x}_{1},\cdots ,\underline{x}_{k},z_{1},\cdots
,z_{L},x_{1},\cdots ,x_{k}\right) \in \hat{E}^{++}(x_0)$.

Now, (\ref{P17}) holds whenever 
\begin{equation*}
\left(\underline{x}_{1},\cdots ,\underline{x}_{k},z_{1},\cdots
,z_{L},x_{1},\cdots ,x_{k}\right) \in \hat{E}^{++}(x_0).
\end{equation*}%
Hence, for such points, (\ref{P16}) holds. From (\ref{P16}) and (\ref{P42}),
we conclude that 
\begin{equation}
\sup \left\{ 
\begin{array}{c}
Q\left( x_{0},P_{0},x_{1},\pi _{x_{1}}^{\bar{\bar{m}}\rightarrow \bar{m}}J_{%
\underline{x}_{1}}^{\left( \bar{\bar{m}}\right) }F,\cdots ,x_{k},\pi
_{x_{k}}^{\bar{\bar{m}}\rightarrow \bar{m}}J_{\underline{x}_{k}}^{\left( 
\bar{\bar{m}}\right) }F,z_{1},\cdots ,z_{L}\right) : \\ 
\left(\underline{x}_{1},\cdots ,\underline{x}_{k},z_{1},\cdots
,z_{L},x_{1},\cdots ,x_{k}\right) \in \hat{E}^{++}(x_0)%
\end{array}%
\right\} <\infty \text{.}  \label{P47}
\end{equation}

In (\ref{P47}), $x_{0}$ is fixed and $\underline{x}_{1},\cdots ,\underline{x}%
_{k},z_{1},\cdots ,z_{L},x_{1},\cdots ,x_{k}$ vary.

From (\ref{P46}) and (\ref{P47}), we see that 
\begin{equation}
\sup \left\{ 
\begin{array}{c}
\hat{Q}\left( x_{0},P_{0},\underline{x}_{1},J_{\underline{x}_{1}}^{\left( 
\bar{\bar{m}}\right) }F,\cdots ,\underline{x}_{k},J_{\underline{x}%
_{k}}^{\left( \bar{\bar{m}}\right) }F,z_{1},\cdots ,z_{L},x_{1},\cdots
,x_{k}\right) : \\ 
\left(\underline{x}_{1},\cdots ,\underline{x}_{k},z_{1},\cdots
,z_{L},x_{1},\cdots ,x_{k}\right) \in \hat{E}^{++}(x_0)%
\end{array}%
\right\} <\infty \text{.}  \label{P48}
\end{equation}

Thus, we have verified (\ref{P40a}) for $x_{0}$, $P_{0}$, $F$. Recalling the
equivalence of (\ref{P40a}), (\ref{P40b}), we now know that (\ref{P40b})
holds.

We have proven this under the assumption that $\underline{E}^{+}\left(
x_{0}\right) \not=\emptyset $, and also assuming (\ref{P42}).

Thus, we have proven the following.

\begin{itemize}
\item[\refstepcounter{equation}\text{(\theequation)}\label{P49}] Let $x_{0}\in E$, $P_{0}\in \mathcal{P}^{\left( m\right) }\left( \mathbb{R}%
^{n},\mathbb{R}^{D}\right) $, $F\in C_{0}^{\infty }\left( \mathbb{R}^{n},%
\mathbb{R}^{I}\right) $.
Assume (\ref%
{P42}), and suppose that $\underline{E}^{+}\left( x_{0}\right)
\not=\emptyset $. Then 
\begin{eqnarray*}
\left( P_{0},J_{\underline{x}_{1}}^{\left( \bar{\bar{m}}\right) }F,\cdots
,J_{\underline{x}_{k}}^{\left( \bar{\bar{m}}\right) }F\right) &\in &\hat{H}^{%
\text{bdd}}\left( x_{0},\underline{x}_{1},\cdots ,\underline{x}_{k}\right)
\end{eqnarray*}%
and 
\begin{equation*}
\left( P_{0},J_{\underline{x}_{1}}^{\left( \hat{m}\right) }F,\cdots ,J_{%
\underline{x}_{k}}^{\left( \hat{m}\right) }F\right) \in \check{H}^{\text{bdd}%
}\left( x_{0},\underline{x}_{1},\cdots ,\underline{x}_{k}\right) \text{.}
\end{equation*}
for all $\left( \underline{x}_{1},\cdots ,\underline{x}_{k}\right) \in 
\underline{E}^{+}\left( x_{0}\right) $. (See (\ref{P44}).)
\end{itemize}

From (\ref{P6}), (\ref{P43}) and (\ref{P49}), we draw the following
conclusion.

\begin{itemize}
\item[\refstepcounter{equation}\text{(\theequation)}\label{P50}] Let $%
x_{0}\in E$, $P_{0}\in \mathcal{P}^{\left( m\right) }\left( \mathbb{R}^{n},%
\mathbb{R}^{D}\right) $, $F\in C_{0}^{\infty }\left( \mathbb{R}^{n},\mathbb{R%
}^{I}\right) $. Assume that 
\begin{equation*}
\sup \left\{ 
\begin{array}{c}
Q\left( x_{0},P_{0},x_{1},J_{x_{1}}^{\left( \bar{m}\right) }F,\cdots
,x_{k},J_{x_{k}}^{\left( \bar{m}\right) }F,z_{1},\cdots ,z_{L}\right) : \\ 
\left( x_{1},\cdots ,x_{k},z_{1},\cdots ,z_{L}\right) \in E^{++}(x_{0})%
\end{array}%
\right\} <\infty \text{.}
\end{equation*}%
Then for all $\left( \underline{x}_{1},\cdots ,\underline{x}_{k}\right) \in 
\underline{E}^{+}\left( x_{0}\right) $, we have 
\begin{eqnarray*}
\left( P_{0},J_{\underline{x}_{1}}^{\left( \underline{m}^{\prime }\right)
}F,\cdots ,J_{\underline{x}_{k}}^{\left( \underline{m}^{\prime }\right)
}F\right) &\in &\underline{H}^{\text{bdd}}\left( x_{0},\underline{x}%
_{1},\cdots ,\underline{x}_{k}\right) \\
\left( P_{0},J_{\underline{x}_{1}}^{\left( \bar{\bar{m}}\right) }F,\cdots
,J_{\underline{x}_{k}}^{\left( \bar{\bar{m}}\right) }F\right) &\in &\hat{H}^{%
\text{bdd}}\left( x_{0},\underline{x}_{1},\cdots ,\underline{x}_{k}\right)
\end{eqnarray*}%
and 
\begin{equation*}
\left( P_{0},J_{\underline{x}_{1}}^{\left( \hat{m}\right) }F,\cdots ,J_{%
\underline{x}_{k}}^{\left( \hat{m}\right) }F\right) \in \check{H}^{\text{bdd}%
}\left( x_{0},\underline{x}_{1},\cdots ,\underline{x}_{k}\right) \text{.}
\end{equation*}
\end{itemize}

Conversely, let $x_{0}\in E$, $P_{0}\in \mathcal{P}^{\left( m\right) }\left( 
\mathbb{R}^{n},\mathbb{R}^{D}\right) $, $F\in C_{0}^{\infty }\left( \mathbb{R%
}^{n},\mathbb{R}^{I}\right) $. Assume that for all $\left( \underline{x}%
_{1},\cdots ,\underline{x}_{k}\right) \in \underline{E}^{+}\left(
x_{0}\right) $, we have 
\begin{equation}
\left( P_{0},J_{\underline{x}_{1}}^{\left( \underline{m}^{\prime }\right)
}F,\cdots ,J_{\underline{x}_{k}}^{\left( \underline{m}^{\prime }\right)
}F\right) \in \underline{H}^{\text{bdd}}\left( x_{0},\underline{x}%
_{1},\cdots ,\underline{x}_{k}\right)  \label{P51}
\end{equation}%
\begin{equation}
\left( P_{0},J_{\underline{x}_{1}}^{\left( \bar{\bar{m}}\right) }F,\cdots
,J_{\underline{x}_{k}}^{\left( \bar{\bar{m}}\right) }F\right) \in \hat{H}^{%
\text{bdd}}\left( x_{0},\underline{x}_{1},\cdots ,\underline{x}_{k}\right)
\label{P52}
\end{equation}%
and%
\begin{equation}
\left( P_{0},J_{\underline{x}_{1}}^{\left( \hat{m}\right) }F,\cdots ,J_{%
\underline{x}_{k}}^{\left( \hat{m}\right) }F\right) \in \check{H}^{\text{bdd}%
}\left( x_{0},\underline{x}_{1},\cdots ,\underline{x}_{k}\right) \text{.}
\label{P52a}
\end{equation}

We will prove (\ref{P42}) under the above assumptions.

To see this, first suppose $\underline{E}^{+}\left( x_{0}\right) =\emptyset
. $ Then since (\ref{P24}) implies (\ref{P18}), we have 
\begin{equation*}
\sup \left\{ 
\begin{array}{c}
Q\left( x_{0},P_{0},x_{1},J_{x_{1}}^{\left( \bar{m}\right) }F,\cdots
,x_{k},J_{x_{k}}^{\left( \bar{m}\right) }F,z_{1},\cdots ,z_{L}\right) : \\ 
\left(x_{1},\cdots ,x_{k},z_{1},\cdots ,z_{L}\right) \in E^{++}(x_0)%
\end{array}%
\right\} <\infty \text{,}
\end{equation*}%
where $x_{0}$ remains fixed and $z_{1},\cdots ,z_{L},x_{1},\cdots ,x_{k}$
vary.

Thus, (\ref{P42}) holds if $\underline{E}^{+}\left( x_{0}\right) =\emptyset $%
. We now suppose that $\underline{E}^{+}\left( x_{0}\right) \not=\emptyset $%
. Note that $x_{0}\in \hat{E}$. (See (\ref{P36}).)

From (\ref{P52}), (\ref{P52a}), and the equivalence of (\ref{P40a}), (\ref%
{P40b}), we see that 
\begin{equation}
\sup \left\{ 
\begin{array}{c}
\hat{Q}\left( x_{0},P_{0},\underline{x}_{1},J_{\underline{x}_{1}}^{\left( 
\bar{\bar{m}}\right) }F,\cdots ,\underline{x}_{k},J_{\underline{x}%
_{k}}^{\left( \bar{\bar{m}}\right) }F,z_{1},\cdots ,z_{L},x_{1},\cdots
,x_{k}\right) : \\ 
\left( \underline{x}_{1},\cdots ,\underline{x}_{k},z_{1},\cdots
,z_{L},x_{1},\cdots ,x_{k}\right) \in \hat{E}^{++}(x_0)%
\end{array}%
\right\} <\infty \text{.}  \label{P53}
\end{equation}

From (\ref{P52}), (\ref{P52a}), and the definition of $\Pi _{\left( x_{0},%
\underline{x}_{1},\cdots ,\underline{x}_{k}\right) }$, we learn that 
\begin{equation*}
\left( P_{0},J_{\underline{x}_{1}}^{\left( \bar{\bar{m}}\right) }F,\cdots
,J_{\underline{x}_{k}}^{\left( \bar{\bar{m}}\right) }F\right) =\Pi _{\left(
x_{0},\underline{x}_{1},\cdots ,\underline{x}_{k}\right) }\left( P_{0},J_{%
\underline{x}_{1}}^{\left( \bar{\bar{m}}\right) }F,\cdots ,J_{\underline{x}%
_{k}}^{\left( \bar{\bar{m}}\right) }F\right)
\end{equation*}%
for $\left( \underline{x}_{1},\cdots ,\underline{x}_{k}\right) \in 
\underline{E}^{+}\left( x_{0}\right) $.

Therefore, (\ref{P37}) and (\ref{P53}) yield the following: 
\begin{equation}
\sup \left\{ 
\begin{array}{c}
Q\left( x_{0},P_{0},x_{1},\pi _{x_{1}}^{\bar{\bar{m}}\rightarrow \bar{m}}J_{%
\underline{x}_{1}}^{\left( \bar{\bar{m}}\right) }F,\cdots ,x_{k},\pi
_{x_{k}}^{\bar{\bar{m}}\rightarrow \bar{m}}J_{\underline{x}_{k}}^{\left( 
\bar{\bar{m}}\right) }F,z_{1},\cdots ,z_{L}\right) : \\ 
\left(\underline{x}_{1},\cdots ,\underline{x}_{k},z_{1},\cdots
,z_{L},x_{1},\cdots ,x_{k}\right) \in \hat{E}^{++}(x_0)%
\end{array}%
\right\} <\infty .  \label{P54}
\end{equation}

In (\ref{P53}) and (\ref{P54}), $x_{0}$ stays fixed, while the sup is taken
over $\underline{x}_{1},\cdots ,\underline{x}_{k},z_{1},\cdots
,z_{L},x_{1},\cdots ,x_{k}$.

When $\left(\underline{x}_{1},\cdots ,\underline{x}_{k},z_{1},\cdots
,z_{L},x_{1},\cdots ,x_{k}\right) \in \hat{E}^{++}(x_0)$, we have (\ref{P17}%
); hence also (\ref{P15}) holds in this case.

From (\ref{P15}) and (\ref{P54}), we see that 
\begin{equation}
\sup \left\{ 
\begin{array}{c}
Q\left( x_{0},P_{0},x_{1},J_{x_{1}}^{\left( \bar{m}\right) }F,\cdots
,x_{k},J_{x_{k}}^{\left( \bar{m}\right) }F,z_{1},\cdots ,z_{L}\right) : \\ 
\left(\underline{x}_{1},\cdots ,\underline{x}_{k},z_{1},\cdots
,z_{L},x_{1},\cdots ,x_{k}\right) \in \hat{E}^{++}(x_0)%
\end{array}%
\right\} <\infty \text{.}  \label{P55}
\end{equation}

Whenever $\left(x_{1},\cdots ,x_{k},z_{1},\cdots ,z_{L}\right) \in
E^{++}(x_0)$ and $\underline{E}^{+}\left( x_{0}\right) \not=\emptyset $, we
can pick $\left( \underline{x}_{1},\cdots ,\underline{x}_{k}\right) \in 
\underline{E}^{+}\left( x_{0}\right) $ as close as possible to $\left(
x_{1},\cdots ,x_{k}\right) $, and we will have 
\begin{equation*}
\left(\underline{x}_{1},\cdots ,\underline{x}_{k},z_{1},\cdots
,z_{L},x_{1},\cdots ,x_{k}\right) \in \hat{E}^{++}(x_0)
\end{equation*}%
unless 
\begin{equation*}
\left( x_{1},\cdots ,x_{k}\right) \in \underline{E}^{+}\left( x_{0}\right) .%
\text{ (See (\ref{P34}).)}
\end{equation*}

Therefore, (\ref{P55}) implies the following. 
\begin{equation}
\sup \left\{ 
\begin{array}{c}
Q\left( x_{0},P_{0},x_{1},J_{x_{1}}^{\left( \bar{m}\right) }F,\cdots
,x_{k},J_{x_{k}}^{\left( \bar{m}\right) }F,z_{1},\cdots ,z_{L}\right) : \\ 
\left(x_{1},\cdots ,x_{k},z_{1},\cdots ,z_{L}\right) \in E^{++}(x_0), \\ 
\left( x_{1},\cdots ,x_{k}\right) \not\in \underline{E}^{+}\left(
x_{0}\right)%
\end{array}%
\right\} <\infty .  \label{P56}
\end{equation}

In (\ref{P55}) and (\ref{P56}), $x_{0}$ stays fixed; the $\sup $ is taken
over the other variables.

Also, from (\ref{P8}), the equivalence of (\ref{P10a}) to (\ref{P10b}), and (%
\ref{P51}), we see that 
\begin{equation}
\sup \left\{ 
\begin{array}{c}
Q\left( x_{0},P_{0},x_{1},J_{x_{1}}^{\left( \bar{m}\right) }F,\cdots
,x_{k},J_{x_{k}}^{\left( \bar{m}\right) }F,z_{1},\cdots ,z_{L}\right) : \\ 
\left(x_{1},\cdots ,x_{k},z_{1},\cdots ,z_{L}\right) \in E^{++}(x_0), \\ 
\left( x_{1},\cdots ,x_{k}\right) \in \underline{E}^{+}\left( x_{0}\right)%
\end{array}%
\right\} <\infty \text{,}  \label{P57}
\end{equation}%
where again $x_{0}$ stays fixed. From (\ref{P56}) and (\ref{P57}), we learn
that 
\begin{equation*}
\sup \left\{ 
\begin{array}{c}
Q\left( x_{0},P_{0},x_{1},J_{x_{1}}^{\left( \bar{m}\right) }F,\cdots
,x_{k},J_{x_{k}}^{\left( \bar{m}\right) }F,z_{1},\cdots ,z_{L}\right) : \\ 
\left(x_{1},\cdots ,x_{k},z_{1},\cdots ,z_{L}\right) \in E^{++}(x_0)%
\end{array}%
\right\} <\infty \text{,}
\end{equation*}%
with $x_{0}$ fixed as usual. Thus, as promised, we have proven (\ref{P42}).

Recalling (\ref{P50}), we now see that we have established the following.

\begin{itemize}
\item[\refstepcounter{equation}\text{(\theequation)}\label{P58}] Let $%
x_{0}\in E$, $P_{0}\in \mathcal{P}^{\left( m\right) }\left( \mathbb{R}^{n},%
\mathbb{R}^{D}\right) $, $F\in C_{0}^{\infty }\left( \mathbb{R}^{n},\mathbb{R%
}^{I}\right) $. Then 
\begin{equation*}
Q\left( x_{0},P_{0},x_{1},J_{x_{1}}^{\left( \bar{m}\right) }F,\cdots
,x_{k},J_{x_{k}}^{\left( \bar{m}\right) }F,z_{1},\cdots ,z_{L}\right)
\end{equation*}%
remains bounded over all $\left( x_{1},\cdots ,x_{k},z_{1},\cdots
,z_{L}\right) \in E^{++}(x_{0})$ if and only if for all $\left( \underline{x}%
_{1},\cdots ,\underline{x}_{k}\right) \in \underline{E}^{+}\left(
x_{0}\right) $, we have%
\begin{eqnarray*}
\left( P_{0},J_{\underline{x}_{1}}^{\left( \underline{m}^{\prime }\right)
}F,\cdots ,J_{\underline{x}_{k}}^{\left( \underline{m}^{\prime }\right)
}F\right) &\in &\underline{H}^{\text{bdd}}\left( x_{0},\underline{x}%
_{1},\cdots ,\underline{x}_{k}\right) \\
\left( P_{0},J_{\underline{x}_{1}}^{\left( \bar{\bar{m}}\right) }F,\cdots
,J_{\underline{x}_{k}}^{\left( \bar{\bar{m}}\right) }F\right) &\in &\hat{H}^{%
\text{bdd}}\left( x_{0},\underline{x}_{1},\cdots ,\underline{x}_{k}\right)
\end{eqnarray*}%
and 
\begin{equation*}
\left( P_{0},J_{\underline{x}_{1}}^{\left( \hat{m}\right) }F,\cdots ,J_{%
\underline{x}_{k}}^{\left( \hat{m}\right) }F\right) \in \check{H}^{\text{bdd}%
}\left( x_{0},\underline{x}_{1},\cdots ,\underline{x}_{k}\right) \text{.}
\end{equation*}
\end{itemize}

From (\ref{P58}), we see easily that the conclusion of Proposition \ref%
{SAQF1} holds for $E$, $E^{+}$, $E^{++}$, $Q$, $A$. This completes the
induction step in our proof of Proposition \ref{SAQF1}.

We turn our attention to the induction step in the proof of Proposition \ref%
{SAQF2}.

Let $x_{0}\in E$, $P_{0}\in \mathcal{P}^{\left( m\right) }\left( \mathbb{R}%
^{n},\mathbb{R}^{D}\right) $, $F\in C_{0}^{\infty }\left( \mathbb{R}^{n},%
\mathbb{R}^{I}\right) $ be given. Assume that 
\begin{equation}
\sup \left\{ 
\begin{array}{c}
Q\left( x_{0},P_{0},x_{1},J_{x_{1}}^{\left( \bar{m}\right) }F,\cdots
,x_{k},J_{x_{k}}^{\left( \bar{m}\right) }F,z_{1},\cdots ,z_{L}\right) : \\ 
\left( x_{1},\cdots ,x_{k},z_{1},\cdots ,z_{L}\right) \in E^{++}\left(
x_{0}\right)%
\end{array}%
\right\} <\infty .  \label{P59}
\end{equation}

In our discussion of the induction step for Proposition \ref{SAQF1}, we
already saw that (\ref{P59}) implies the following (see (\ref{P46}) and (\ref%
{P48})).%
\begin{eqnarray}
&&\hat{Q}\left( x_{0},P_{0},\underline{x}_{1},J_{\underline{x}_{1}}^{\left( 
\bar{\bar{m}}\right) }F,\cdots ,\underline{x}_{k},J_{\underline{x}%
_{k}}^{\left( \bar{\bar{m}}\right) }F,z_{1},\cdots ,z_{L},x_{1},\cdots
,x_{k}\right)  \notag \\
&=&Q\left( x_{0},P_{0},x_{1},\pi _{x_{1}}^{\bar{\bar{m}}\rightarrow \bar{m}%
}J_{\underline{x}_{1}}^{\left( \bar{\bar{m}}\right) }F,\cdots ,x_{k},\pi
_{x_{k}}^{\bar{\bar{m}}\rightarrow \bar{m}}J_{\underline{x}_{k}}^{\left( 
\bar{\bar{m}}\right) }F,z_{1},\cdots ,z_{L}\right)  \label{P60}
\end{eqnarray}%
for 
\begin{equation*}
\left( x_{0},\underline{x}_{1},\cdots ,\underline{x}_{k},z_{1},\cdots
,z_{L},x_{1},\cdots ,x_{k}\right) \in \hat{E}^{++}\text{.}
\end{equation*}

\begin{equation}
\sup \left\{ 
\begin{array}{c}
\hat{Q}\left( x_{0},P_{0},\underline{x}_{1},J_{\underline{x}_{1}}^{\left( 
\bar{\bar{m}}\right) }F,\cdots ,\underline{x}_{k},J_{\underline{x}%
_{k}}^{\left( \bar{\bar{m}}\right) }F,z_{1},\cdots ,z_{L},x_{1},\cdots
,x_{k}\right) : \\ 
\left(\underline{x}_{1},\cdots ,\underline{x}_{k},z_{1},\cdots
,z_{L},x_{1},\cdots ,x_{k}\right) \in \hat{E}^{++}(x_0)%
\end{array}%
\right\} <\infty \text{,}  \label{P61}
\end{equation}%
with $x_{0}$ held fixed in (\ref{P60}) and (\ref{P61}).

Now suppose $\underline{E}^{+}\left( x_{0}\right) \not=\emptyset $, i.e., $%
x_{0}\in \hat{E}$ (see (\ref{P36})). Then (\ref{P60}), (\ref{P61}), and the
equivalence of (\ref{P41a}), (\ref{P41b}), together show that

\begin{itemize}
\item[\refstepcounter{equation}\text{(\theequation)}\label{P62}] $\left(
P_{0},J_{x_{0}}^{\left( \hat{m}^{+}\right) }F\right) \in \check{H}^{\lim
}\left( x_{0}\right) $ if and only if%
\begin{equation*}
\underset{\left( \underline{x}_{1},\cdots ,\underline{x}_{k},z_{1},\cdots
,z_{L},x_{1},\cdots ,x_{k}\right) \in \hat{E}^{++}(x_0)}{\lim_{\underline{x}%
_{1},\cdots ,\underline{x}_{k},z_{1},\cdots ,z_{L},x_{1},\cdots
,x_{k}\rightarrow x_{0}}}Q\left( x_{0},P_{0},x_{1},\pi _{x_{1}}^{\bar{\bar{m}%
}\rightarrow \bar{m}}J_{\underline{x}_{1}}^{\left( \bar{\bar{m}}\right)
}F,\cdots ,x_{k},\pi _{x_{k}}^{\bar{\bar{m}}\rightarrow \bar{m}}J_{%
\underline{x}_{k}}^{\left( \bar{\bar{m}}\right) }F,z_{1},\cdots
,z_{L}\right) =0\text{.}
\end{equation*}
\end{itemize}

For $\left(x_0,\underline{x}_{1},\cdots ,\underline{x}_{k},z_{1},\cdots
,z_{L},x_{1},\cdots ,x_{k}\right) \in \hat{E}^{++}$ we have (\ref{P17}) and
therefore (\ref{P15}), (\ref{P16}). (See (\ref{P34}).)

Consequently, (\ref{P62}) implies the following.

\begin{itemize}
\item[\refstepcounter{equation}\text{(\theequation)}\label{P63}] $\left(
P_{0},J_{x_{0}}^{\left( \hat{m}^{+}\right) }F\right) \in \check{H}^{\lim
}\left( x_{0}\right) $ if and only if 
\begin{equation*}
\underset{\left( \underline{x}_{1},\cdots ,\underline{x}_{k},z_{1},\cdots
,z_{L},x_{1},\cdots ,x_{k}\right) \in \hat{E}^{++}(x_0)}{\lim_{\underline{x}%
_{1},\cdots ,\underline{x}_{k},z_{1},\cdots ,z_{L},x_{1},\cdots
,x_{k}\rightarrow x_{0}}}Q\left( x_{0},P_{0},x_{1},J_{x_{1}}^{\left( {\bar{m}%
}\right) }F,\cdots ,x_{k},J_{x_{k}}^{\left( {\bar{m}}\right) }F,z_{1},\cdots
,z_{L}\right) =0\text{.}
\end{equation*}
\end{itemize}

Let $\delta >0$. If $\left( x_{0},\underline{x}_{1},\cdots ,\underline{x}%
_{k},z_{1},\cdots ,z_{L},x_{1},\cdots ,x_{k}\right) \in \hat{E}^{++}$ and 
\begin{equation*}
\left\vert \underline{x}_{i}-x_{0}\right\vert ,\left\vert
x_{i}-x_{0}\right\vert <\delta \text{ (all }i\text{),}
\end{equation*}%
and also 
\begin{equation*}
\left\vert z_{i}-x_{0}\right\vert <\delta \text{ (all }i\text{),}
\end{equation*}%
it follows that

\begin{itemize}
\item[\refstepcounter{equation}\text{(\theequation)}\label{P64}] $\left(
x_{1},\cdots ,x_{k},z_{1},\cdots ,z_{L}\right) \in E^{++}(x_0)$, $%
\left(x_{1},\cdots ,x_{k}\right) \not\in \underline{E}^{+}(x_0)$, and $%
\left\vert x_{i}-x_{0}\right\vert <\delta $ (all $i$), $\left\vert
z_{i}-x_{0}\right\vert <\delta $ (all $i$), see (\ref{P34}).
\end{itemize}

Conversely, suppose (\ref{P64}) holds and assume $(x_0, \cdots, x_0) \in 
\underline{E}^{+}(x_0)$. Let $\left( \underline{x}_{1},\cdots ,\underline{x}%
_{k}\right) \in \underline{E}^{+}\left( x_{0}\right) $ be as close as
possible to $\left( x_{1},\cdots ,x_{k}\right) $. Then by (\ref{P34}), $%
\left(\underline{x}_{1},\cdots ,\underline{x}_{k},z_{1},\cdots
,z_{L},x_{1},\cdots ,x_{k}\right) $ belongs to $\hat{E}^{++}(x_0);$ and we
have 
\begin{eqnarray*}
\left\vert \underline{x}_{i}-x_{0}\right\vert &\leq &\dist\left( \left( 
\underline{x}_{1},\cdots ,\underline{x}_{k}\right) ,\left( x_{1},\cdots
,x_{k}\right) \right) +\left\vert x_{i}-x_{0}\right\vert \\
&\leq &\dist\left( \left( x_{0},\cdots ,x_{0}\right) ,\left( x_{1},\cdots
,x_{k}\right) \right) +\left\vert x_{i}-x_{0}\right\vert \\
&<&C\delta \text{ for each }i\text{.}
\end{eqnarray*}

In view of the above remarks, (\ref{P63}) implies the following.

\begin{itemize}
\item[\refstepcounter{equation}\text{(\theequation)}\label{P65}] Suppose (%
\ref{P59}) holds, and suppose $\left( x_{0},\cdots ,x_{0}\right) \in 
\underline{E}^{+}\left( x_{0}\right) $. Then $\left( P_{0},J_{x_{0}}^{\left( 
\hat{m}^{+}\right) }F\right) \in \check{H}^{\lim }\left( x_{0}\right) $ if
and only if 
\begin{equation*}
\underset{\left( x_{1},\cdots ,x_{k}\right) \not\in \underline{E}^{+}\left(
x_{0}\right) }{\underset{\left(x_{1},\cdots ,x_{k},z_{1},\cdots
,z_{L}\right) \in {E}^{++}(x_0)}{\lim_{x_{1},\cdots ,x_{k},z_{1},\cdots
,z_{L}\rightarrow x_{0}}}}Q\left( x_{0},P_{0},x_{1},J_{x_{1}}^{\left( \bar{m}%
\right) }F,\cdots ,x_{k},J_{x_{k}}^{\left( \bar{m}\right) }F,z_{1},\cdots
,z_{L}\right) =0\text{.}
\end{equation*}
\end{itemize}

On the other hand, suppose (\ref{P59}) holds, $\underline{E}^{+}\left(
x_{0}\right) \not=\emptyset $, and assume that $\left( x_{0},\cdots
,x_{0}\right) \not\in \underline{E}^{+}\left( x_{0}\right) $.

Then we are in Case 1'; see (\ref{P24}). Thus, (\ref{P24Beta}) holds.

Consequently,

\begin{itemize}
\item[\refstepcounter{equation}\text{(\theequation)}\label{P66}] $\underset{%
\left( x_{1},\cdots ,x_{k},z_{1},\cdots ,z_{L}\right) \in E^{++}(x_0)}{%
\lim_{x_{1},\cdots ,x_{k},z_{1},\cdots ,z_{L}\rightarrow x_{0}}}Q\left(
x_{0},P_{0},x_{1},J_{x_{1}}^{\left( \bar{m}\right) }F,\cdots
,x_{k},J_{x_{k}}^{\left( \bar{m}\right) }F,z_{1},\cdots ,z_{L}\right) =0$ if
and only if 
\begin{equation*}
\underset{\left(x_{1},\cdots ,x_{k},z_{1},\cdots ,z_{L}\right) \in
E^{++}(x_0)}{\lim_{x_{1},\cdots ,x_{k},z_{1},\cdots ,z_{L}\rightarrow x_{0}}}%
Q\left( x_{0},P_{0},x_{1},\pi _{x_{1}}^{\bar{\bar{m}}\rightarrow \bar{m}%
}J_{x_0}^{\left( \bar{\bar{m}}\right) }F,\cdots ,x_{k},\pi _{x_{k}}^{\bar{%
\bar{m}}\rightarrow \bar{m}}J_{x_0}^{\left( \bar{\bar{m}}\right)
}F,z_{1},\cdots ,z_{L}\right) =0\text{.}
\end{equation*}

\item[\refstepcounter{equation}\text{(\theequation)}\label{P67}] We define 
\begin{equation*}
H^{\prime }\left( x_{0}\right) \subset \mathcal{P}^{\left( m\right) }\left( 
\mathbb{R}^{n},\mathbb{R}^{D}\right) \oplus \mathcal{P}^{\left( \bar{\bar{m}}%
\right) }\left( \mathbb{R}^{n},\mathbb{R}^{I}\right)
\end{equation*}%
to consist of all $\left( P_{0},P\right) $ such that 
\begin{equation*}
\underset{\left( x_{1},\cdots ,x_{k},z_{1},\cdots ,z_{L}\right) \in
E^{++}(x_{0})}{\lim_{x_{1},\cdots ,x_{k},z_{1},\cdots ,z_{L}\rightarrow
x_{0}}}Q\left( x_{0},P_{0},x_{1},\pi _{x_{1}}^{\bar{\bar{m}}\rightarrow \bar{%
m}}P,\cdots ,x_{k},\pi _{x_{k}}^{\bar{\bar{m}}\rightarrow \bar{m}%
}P,z_{1},\cdots ,z_{L}\right) =0\text{.}
\end{equation*}
\end{itemize}

Then, $H^{\prime }\left( x_{0}\right) \subset \mathcal{P}^{\left( m\right)
}\left( \mathbb{R}^{n},\mathbb{R}^{D}\right) \oplus \mathcal{P}^{\left( \bar{%
\bar{m}}\right) }\left( \mathbb{R}^{n},\mathbb{R}^{I}\right) $ is a vector
space depending semialgebraically on $x_{0}$; see \eqref{PSQF6}. Moreover, (%
\ref{P66}) now tells us the following.

\begin{itemize}
\item[\refstepcounter{equation}\text{(\theequation)}\label{P68}] Suppose (%
\ref{P59}) holds, $\underline{E}\left( x_{0}\right) \not=\emptyset $, but $%
\left( x_{0},\cdots ,x_{0}\right) \not\in \underline{E}\left( x_{0}\right) $%
. Then 
\begin{equation*}
\underset{\left(x_{1},\cdots ,x_{k},z_{1},\cdots ,z_{L}\right) \in \hat{E}%
^{++}(x_0)}{\lim_{x_{1},\cdots ,x_{k},z_{1},\cdots ,z_{L}\rightarrow x_{0}}}%
Q\left( x_{0},P_{0},x_{1},J_{{x}_{1}}^{\left( \bar{m}\right) }F,\cdots
,x_{k},J_{{x}_{k}}^{\left( \bar{m}\right) }F,z_{1},\cdots ,z_{L}\right) =0
\end{equation*}%
if and only if 
\begin{equation*}
\left( P_{0},J_{x_{0}}^{\left( \bar{\bar{m}}\right) }F\right) \in H^{\prime
}\left( x_{0}\right) \text{.}
\end{equation*}
\end{itemize}

Next, suppose (\ref{P59}) holds and $\underline{E}\left( x_{0}\right)
=\emptyset $. Then we are in Case 2 above; see (\ref{P18})$,\cdots $,(\ref%
{P24}).

In particular, (\ref{P24}) holds for $\left(x_{1},\cdots ,x_{k},z_{1},\cdots
,z_{L}\right) \in E^{++}(x_0)$; hence (\ref{P22}) and (\ref{P23}) hold.
Therefore, as in our discussion of \eqref{P66}$\cdots$\eqref{P68}, we learn
from (\ref{P67}) that 
\begin{equation*}
\underset{\left(x_{1},\cdots ,x_{k},z_{1},\cdots ,z_{L}\right) \in
E^{++}(x_0)}{\lim_{x_{1},\cdots ,x_{k},z_{1},\cdots ,z_{L}\rightarrow x_{0}}}%
Q\left( x_{0},P_{0},x_{1},J_{x_{1}}^{\left( \bar{m}\right) }F,\cdots
,x_{k},J_{x_{k}}^{\left( \bar{m}\right) }F,z_{1},\cdots ,z_{L}\right) =0
\end{equation*}%
if and only if 
\begin{equation*}
\left( P_{0},J_{x_{0}}^{\left( \bar{\bar{m}}\right) }F\right) \in H^{\prime
}\left( x_{0}\right) \text{.}
\end{equation*}%
Together with (\ref{P68}), this tells us the following.

\begin{itemize}
\item[\refstepcounter{equation}\text{(\theequation)}\label{P69}] Suppose (%
\ref{P59}) holds and $\left( x_{0},\cdots ,x_{0}\right) \not\in \underline{E}%
^{+}\left( x_{0}\right) $. Then 
\begin{equation*}
\underset{\left(x_{1},\cdots ,x_{k},z_{1},\cdots ,z_{L}\right) \in
E^{++}(x_0)}{\lim_{x_{1},\cdots ,x_{k},z_{1},\cdots ,z_{L}\rightarrow x_{0}}}%
Q\left( x_{0},P_{0},x_{1},J_{x_{1}}^{\left( \bar{m}\right) }F,\cdots
,x_{k},J_{x_{k}}^{\left( \bar{m}\right) }F,z_{1},\cdots ,z_{L}\right) =0
\end{equation*}%
if and only if 
\begin{equation*}
\left( P_{0},J_{x_{0}}^{\left( \bar{\bar{m}}\right) }F\right) \in H^{\prime
}\left( x_{0}\right) \text{.}
\end{equation*}
\end{itemize}

Next, exploiting (\ref{P1}), we apply our inductive hypothesis (Propositions %
\ref{SAQF1} and \ref{SAQF2} hold for $\dim E^{+}<\Delta $) to 
\begin{equation*}
Q\left( x_{0},P_{0},x_{1},P_{1},\cdots ,x_{k},P_{k},z_{1},\cdots
,z_{L}\right)
\end{equation*}%
restricted to $\left\{ \left( x_{1},\cdots ,x_{k},z_{1},\cdots ,z_{L}\right)
\in E^{++}(x_{0}):\left( x_{1},\cdots ,x_{k}\right) \in \underline{E}%
^{+}(x_{0})\right\} $. From Proposition \ref{SAQF2} applied to this case, we
obtain an integer $\underline{\underline{\hat{m}}}\geq \bar{m}$ and a computable family of vector spaces 
\begin{equation*}
\underline{H}^{\lim }\left( x_{0}\right) \subset \mathcal{P}^{\left(
m\right) }\left( \mathbb{R}^{n},\mathbb{R}^{D}\right) \oplus \mathcal{P}%
^{\left( \underline{\underline{\hat{m}}}\right) }\left( \mathbb{R}^{n},%
\mathbb{R}^{I}\right)
\end{equation*}%
depending semialgebraically on $x_{0}\in E$, such that the following holds.

\begin{itemize}
\item[\refstepcounter{equation}\text{(\theequation)}\label{P70}] Let $%
x_{0}\in E$, $P_{0}\in \mathcal{P}^{\left( m\right) }\left( \mathbb{R}^{n},%
\mathbb{R}^{D}\right) $, $F\in C_{0}^{\infty }\left( \mathbb{R}^{n},\mathbb{R%
}^{I}\right) $. Suppose that $\underline{E}^{+}(x_{0})\not=\emptyset $
(i.e., $x_{0}\in \underline{E}$), and 
\begin{equation*}
\sup \left\{ 
\begin{array}{c}
Q\left( x_{0},P_{0},x_{1},J_{x_{1}}^{\left( \bar{m}\right) }F,\cdots
,x_{k},J_{x_{k}}^{\left( \bar{m}\right) }F,z_{1},\cdots ,z_{L}\right) : \\ 
\left( x_{1},\cdots ,x_{k},z_{1},\cdots ,z_{L}\right) \in E^{++}(x_{0}), \\ 
\left( x_{1},\cdots ,x_{k}\right) \in \underline{E}^{+}(x_{0})%
\end{array}%
\right\} <\infty \text{.}
\end{equation*}%
Then 
\begin{equation*}
\underset{\left( x_{1},\cdots ,x_{k}\right) \in \underline{E}^{+}\left(
x_{0}\right) }{\underset{\left( x_{1},\cdots ,x_{k},z_{1},\cdots
,z_{L}\right) \in E^{++}(x_{0})}{\lim_{x_{1},\cdots ,x_{k},z_{1},\cdots
,z_{L}\rightarrow x_{0}}}}Q\left( x_{0},P_{0},x_{1},J_{x_{1}}^{\left( \bar{m}%
\right) }F,\cdots ,x_{k},J_{x_{k}}^{\left( \bar{m}\right) }F,z_{1},\cdots
,z_{L}\right) =0
\end{equation*}%
if and only if 
\begin{equation*}
\left( P_{0},J_{x_{0}}^{\left( \underline{\underline{\hat{m}}}\right)
}F\right) \in \underline{H}^{\lim }\left( x_{0}\right) \text{.}
\end{equation*}
\end{itemize}

From (\ref{P65}), (\ref{P69}), (\ref{P70}), we obtain the following.

\begin{itemize}
\item[\refstepcounter{equation}\text{(\theequation)}\label{P71}] Let $%
x_{0}\in E$, $P_{0}\in \mathcal{P}^{\left( m\right) }\left( \mathbb{R}^{n},%
\mathbb{R}^{D}\right) $, $F\in C_{0}^{\infty }\left( \mathbb{R}^{n},\mathbb{R%
}^{I}\right) $ be given. Suppose that 
\begin{equation*}
\sup \left\{ 
\begin{array}{c}
Q\left( x_{0},P_{0},x_{1},J_{x_{1}}^{\left( \bar{m}\right) }F,\cdots
,x_{k},J_{x_{k}}^{\left( \bar{m}\right) }F,z_{1},\cdots ,z_{L}\right) : \\ 
\left( x_{1},\cdots ,x_{k},z_{1},\cdots ,z_{L}\right) \in E^{++}(x_{0})%
\end{array}%
\right\} <\infty \text{.}
\end{equation*}%
Then

\begin{itemize}
\item[(a)] If $\left( x_{0},\cdots ,x_{0}\right) \in \underline{E}^{+}\left(
x_{0}\right) $, then 
\begin{equation*}
\underset{\left( x_{1},\cdots ,x_{k},z_{1},\cdots ,z_{L}\right) \in
E^{++}(x_0)}{\lim_{x_{1},\cdots ,x_{k},z_{1},\cdots ,z_{L}\rightarrow x_{0}}}%
Q\left( x_{0},P_{0},x_{1},J_{x_{1}}^{\left( \bar{m}\right) }F,\cdots
,x_{k},J_{x_{k}}^{\left( \bar{m}\right) }F,z_{1},\cdots ,z_{L}\right) =0
\end{equation*}%
if and only if 
\begin{equation*}
\left( P_{0},J_{x_{0}}^{\left( \hat{m}^{+}\right) }F\right) \in \check{H}%
^{\lim }\left( x_{0}\right)
\end{equation*}%
and 
\begin{equation*}
\left( P_{0},J_{x_{0}}^{\left( \underline{\underline{\hat{m}}}\right)
}F\right) \in \underline{H}^{\lim }\left( x_{0}\right) \text{.}
\end{equation*}

\item[(b)] If $\left( x_{0},\cdots ,x_{0}\right) \not\in \underline{E}%
^{+}\left( x_{0}\right) $, then 
\begin{equation*}
\underset{\left(x_{1},\cdots ,x_{k},z_{1},\cdots ,z_{L}\right) \in
E^{++}(x_0)}{\lim_{x_{1},\cdots ,x_{k},z_{1},\cdots ,z_{L}\rightarrow x_{0}}}%
Q\left( x_{0},P_{0},x_{1},J_{x_{1}}^{\left( \bar{m}\right) }F,\cdots
,x_{k},J_{x_{k}}^{\left( \bar{m}\right) }F,z_{1},\cdots ,z_{L}\right) =0
\end{equation*}%
if and only if 
\begin{equation*}
\left( P_{0},J_{x_{0}}^{\left( \bar{\bar{m}}\right) }F\right) \in H^{\prime
}\left( x_{0}\right) \text{.}
\end{equation*}
\end{itemize}
\end{itemize}

From (\ref{P71}), we obtain at once the conclusion of Proposition \ref{SAQF2}
for the data $E$, $E^{+}$, $E^{++}$, $Q$, $A$.

This concludes our inductive proof of Propositions \ref{SAQF1} and \ref%
{SAQF2}.
\end{proof}

\section{Bundles and Glaeser Refinements}

\subsection{Notation, Definitions, Preliminaries\label%
{Notation-Definitions-Prelim}}

Let $Q\in \mathcal{P}^{(m)}\left( \mathbb{R}^{n},\mathbb{R}^{D}\right) ;$
say $Q=\left( Q_{1},\cdots ,Q_{D}\right) $ with each $Q_{i}\in \mathcal{P}%
^{(m)}\left( \mathbb{R}^{n},\mathbb{R}\right) $. Let $P\in \mathcal{P}%
^{(m)}\left( \mathbb{R}^{n},\mathbb{R}\right) $.

For $x\in \mathbb{R}^{n}$, we define $P\odot _{x}Q:=\left( P\odot
_{x}Q_{1},\cdots ,P\odot _{x}Q_{D}\right) $, where $P\odot
_{x}Q_{i}=J_{x}^{\left( m\right) }\left( PQ_{i}\right) $ is the product of $%
P $ and $Q_{i}$ as $m$-jets at $x$. The above multiplication $\odot _{x}$
makes $\mathscr{R}_{x}^{m}:=\mathcal{P}^{(m)}\left( \mathbb{R}^{n}\right) $
into a ring, and it also makes $\mathcal{P}^{(m)}\left( \mathbb{R}^{n},%
\mathbb{R}^{D}\right) $ into an $\mathscr{R}_{x}^{m}$-module.

Let $E\subset \mathbb{R}^{n}$ be compact. Fix $m\geq 0$, $D\geq 1$. A 
\underline{bundle} over $E$ is a family $\mathscr{H}=\left( H_{x}\right)
_{x\in E}$ parameterized by points $x\in E$, where, for each $x$, the 
\underline{fiber} $H_{x}$ is either the empty set or else has the form 
\begin{equation*}
H_{x}=f\left( x\right) +I\left( x\right) ,
\end{equation*}%
where $f\left( x\right) \in \mathcal{P}^{(m)}\left( \mathbb{R}^{n},\mathbb{R}%
^{D}\right) $ and $I\left( x\right) \subset \mathcal{P}^{(m)}\left( \mathbb{R%
}^{n},\mathbb{R}^{D}\right) $ is an $\mathscr{R}_{x}^{m}$-submodule.

We call $\mathscr{H}$ \underline{proper} if each of its fibers is non-empty.
Note that compactness of $E$ is part of the definition of a bundle. Let $%
\mathscr{H}=\left( H_{x}\right) _{x\in E}$ be a bundle, and let $F\in
C^{m}\left( \mathbb{R}^{n},\mathbb{R}^{D}\right) $. We say that $F$ is a 
\underline{section} of $\mathscr{H}$ if $J_{x}^{\left( m\right) }F\in H_{x} $
for all $x\in E$. Clearly, this cannot happen unless $\mathscr{H}$ is proper.

Next, we define the \underline{strong Glaeser refinement} of a bundle $%
\mathscr{H}=\left( H_{x}\right) _{x\in E}$, denoted by $\mathscr{G}\left( %
\mathscr{H}\right) =\left( \tilde{H}_{x}\right) _{x\in E}$. For any $%
x_{0}\in E$, the fiber $\tilde{H}_{x_{0}}$ consists of all $P_{0}=\left(
P_{0,1},\cdots ,P_{0,D}\right) \in H_{x_{0}}$ satisfying the following
conditions, for a large enough $\bar{k}$ determined by $m,n,D$:

\begin{itemize}
\item[(GR1)] For some finite constant $K$, we have 
\begin{equation}
\min \left\{ 
\begin{array}{c}
\underset{x_{k}\not=x_{k^{\prime }}}{\underset{k,k^{\prime }\in \left\{
0,\cdots ,\bar{k}\right\} }{\sum_{i=1,\cdots ,D}}}\sum_{\left\vert \alpha
\right\vert \leq m}\frac{\left\vert \partial ^{\alpha }\left(
P_{k,i}-P_{k^{\prime },i}\right) \left( x_{k}\right) \right\vert ^{2}}{%
\left\vert x_{k}-x_{k^{\prime }}\right\vert ^{2\left( m-\left\vert \alpha
\right\vert \right) }}: \\ 
P_{1}=\left( P_{1,1},\cdots ,P_{1,D}\right) \in H_{x_{1}},\cdots ,P_{\bar{k}%
}=\left( P_{\bar{k},1},\cdots ,P_{\bar{k},D}\right) \in H_{x_{\bar{k}}}%
\end{array}%
\right\} \leq K  \label{NDP1}
\end{equation}%
for all $x_{1},\cdots ,x_{\bar{k}}\in E$.

\item[(GR2)] The left-hand side of $\left( \ref{NDP1}\right) $ tends to zero
as $\left( x_{1},\cdots ,x_{\bar{k}}\right) \rightarrow \left( x_{0},\cdots
,x_{0}\right) $ in $E^{\bar{k}}$.
\end{itemize}

This definition differs from the usual definition of Glaeser refinement in
previous papers \cite{F1,cf-luli-gafa, fl-jets} on Whitney's problem.

The fiber at $x_{0}$ of the \underline{standard Glaeser refinement} of $%
\mathscr{H}$ is defined to consist of all $P_{0}\in H_{x_{0}}$ that satisfy
(GR2); we do not require (GR1). This notion agrees with the definition of
the Glaeser refinement in the previous papers on Whitney's problems.

Note that the strong Glaeser refinement of $\mathscr{H}$ is a subbundle of
the standard Glaeser refinement of $\mathscr{H}$, which in turn is a
subbundle of $\mathscr{H}$.

(We say that $\mathscr{H}=\left( H_{x}\right) _{x\in E}$ is a \underline{%
subbundle} of $\mathscr{H}^{\prime }=\left( H_{x}^{\prime }\right) _{x\in E}$
if $\mathscr{H}$ and $\mathscr{H}^{\prime }$ are bundles and $H_{x}\subseteq
H_{x}^{\prime }$ for all $x\in E$.)

Moreover, Taylor's theorem implies that any section of $\mathscr{H}$ is
already a section of $\mathscr{G}(\mathscr{H})$, the strong Glaeser
refinement of $\mathscr{H}$.

For any bundle $\mathscr{H}$, and for any integer $l\geq 0$, we define the $%
l^{th}$ iterated (strong) Glaeser refinement of $\mathscr{H}$ by the
following induction:

$\mathscr{G}^{\left( 0\right) }\mathscr{H}=\mathscr{H};\mathscr{G}^{\left(
l+1\right) }\mathscr{H}=\mathscr{G}\left( \mathscr{G}^{\left( l\right) }%
\mathscr{H}\right) $. In principle, we can compute any given $\mathscr{G}%
^{(l)}(\mathscr{H})$ from $\mathscr{H}$.

By induction on $l$, we see that the sections of $\mathscr{H}$ are the same
as the sections of $\mathscr{G}^{\left( l\right) }\mathscr{H}$.

The following result is therefore immediate from the corresponding assertion
for the standard Glaeser refinement, proven in the papers \cite{F1,fl-jets}
for $m \geq 1$, and in \cite{Feff-Kollar} for $m=0$.

\begin{theorem}
\label{CK-Theorem}There exists $l_{\ast }$, depending only on $m$, $n$, $D$,
for which the following holds. Let $\mathscr{H}$ be a bundle. Then $%
\mathscr{H}$ has a section if and only if $\mathscr{G}^{\left( l_{\ast
}\right) }\mathscr{H}$ is a proper bundle.
\end{theorem}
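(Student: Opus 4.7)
The plan is to reduce Theorem \ref{CK-Theorem} to the corresponding assertion for the standard Glaeser refinement proved in \cite{F1,fl-jets,Feff-Kollar}, by sandwiching the iterated strong refinement between the bundle and its iterated standard refinement. Throughout, I will exploit two features of the strong refinement operator $\mathscr{G}$: it shrinks fibers, and it is order-preserving on bundles.

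First I would record the two monotonicity statements. By definition, $\mathscr{G}(\mathscr{H})$ imposes both (GR1) and (GR2), while the standard refinement $\mathscr{G}_{\text{std}}(\mathscr{H})$ imposes only (GR2); hence $\mathscr{G}(\mathscr{H})$ is a subbundle of $\mathscr{G}_{\text{std}}(\mathscr{H})$, which in turn is a subbundle of $\mathscr{H}$. Moreover, if $\mathscr{H}\subset \mathscr{H}'$ then enlarging the constraint sets $H_{x_i}\subset H'_{x_i}$ only decreases the minima appearing in (GR1) and (GR2); this gives $\mathscr{G}(\mathscr{H})\subset \mathscr{G}(\mathscr{H}')$ and $\mathscr{G}_{\text{std}}(\mathscr{H})\subset \mathscr{G}_{\text{std}}(\mathscr{H}')$. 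Iterating these two facts yields $\mathscr{G}^{(l)}\mathscr{H}\subset \mathscr{G}_{\text{std}}^{(l)}\mathscr{H}$ for every $l\geq 0$.

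Second, I would verify that sections of $\mathscr{H}$ persist under iterated strong Glaeser refinement. Let $F\in C^m(\mathbb{R}^n,\mathbb{R}^D)$ be a section of $\mathscr{H}$, fix $x_0\in E$, and set $P_k=J_{x_k}^{(m)}F\in H_{x_k}$ for any $x_1,\ldots,x_{\bar k}\in E$ (with $x_0$ as given). Proposition \ref{taylor'sthm} applied with $\bar{\bar{m}}=\bar m=m$ gives
\begin{equation*}
|\partial^\alpha(P_k-P_{k'})(x_k)|\leq C\|F\|_{C^m}\,|x_k-x_{k'}|^{m-|\alpha|}\quad(|\alpha|\leq m),
\end{equation*}
so each ratio inside (\ref{NDP1}) is uniformly bounded by $C^2\|F\|_{C^m}^2$, yielding (GR1). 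The same estimate forces the sum in (\ref{NDP1}) to tend to $0$ as $(x_1,\ldots,x_{\bar k})\to(x_0,\ldots,x_0)$, yielding (GR2). Thus $J_{x_0}^{(m)}F$ lies in the fiber of $\mathscr{G}(\mathscr{H})$ at $x_0$; induction on $l$ shows $F$ is a section of $\mathscr{G}^{(l)}\mathscr{H}$ for every $l$. In particular, if $\mathscr{H}$ has a section, then $\mathscr{G}^{(l_*)}\mathscr{H}$ is proper for \emph{every} $l_*$.

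For the converse, I would invoke the cited theorem: there exists $l_*=l_*(m,n,D)$ such that $\mathscr{G}_{\text{std}}^{(l_*)}\mathscr{H}$ proper implies $\mathscr{H}$ has a section. If $\mathscr{G}^{(l_*)}\mathscr{H}$ is proper, then by the sandwich inclusion $\mathscr{G}^{(l)}\mathscr{H}\subset \mathscr{G}_{\text{std}}^{(l)}\mathscr{H}$ established above, $\mathscr{G}_{\text{std}}^{(l_*)}\mathscr{H}$ is also proper, and the cited result produces a section. Combined with the previous paragraph, this completes both directions. The main obstacle I anticipate is purely bookkeeping, namely checking that the definition of $\mathscr{G}_{\text{std}}$ recorded here really agrees with the Glaeser refinement used in \cite{F1,fl-jets} (for $m\geq 1$) and in \cite{Feff-Kollar} (for $m=0$); this is asserted in the text but worth spelling out, particularly since the $m=0$ case handles bundles of affine subspaces rather than jets of positive order and so requires the separate input from \cite{Feff-Kollar}.
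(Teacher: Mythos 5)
Your overall route is the one the paper intends when it says the theorem is ``immediate from the corresponding assertion for the standard Glaeser refinement'': establish that sections of $\mathscr{H}$ persist under $\mathscr{G}$, sandwich $\mathscr{G}^{(l)}\mathscr{H}$ inside the $l$-fold standard Glaeser refinement via monotonicity of both operators, and then invoke the cited result. Your monotonicity argument and the resulting inclusion $\mathscr{G}^{(l)}\mathscr{H}\subset\mathscr{G}^{(l)}_{\mathrm{std}}\mathscr{H}$ are correct and fill in a step the paper leaves implicit, and the ``if'' direction of the theorem is handled correctly.

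There is, however, a genuine gap in your verification that a section $F$ of $\mathscr{H}$ produces a jet satisfying (GR2). Applying Proposition~\ref{taylor'sthm} with $\bar{\bar m}=\bar m=m$ gives
\[
\bigl|\partial^\alpha(P_k-P_{k'})(x_k)\bigr|\le C\|F\|_{C^m}\,|x_k-x_{k'}|^{m-|\alpha|},
\]
which shows each quotient in \eqref{NDP1} is bounded by the constant $C^2\|F\|_{C^m}^2$ and therefore gives (GR1); but it does \emph{not} show those quotients tend to zero, and indeed this bound does not shrink as $x_1,\dots,x_{\bar k}\to x_0$. The $|\alpha|=m$ terms already illustrate the issue: there the quotient is simply $|\partial^\alpha F(x_k)-\partial^\alpha F(x_{k'})|^2$, whose vanishing is the uniform continuity of $\partial^\alpha F$ near $x_0$, an input not contained in Proposition~\ref{taylor'sthm}. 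To get (GR2) you need the ``little-$o$'' form of Taylor's theorem for $C^m$ functions: using the modulus of continuity $\omega$ of the order-$m$ derivatives of $F$ on a compact neighborhood of $x_0$, one obtains
\[
\bigl|\partial^\alpha\bigl(J^{(m)}_{x_{k'}}F-J^{(m)}_{x_k}F\bigr)(x_k)\bigr|\le C\,\omega\bigl(|x_k-x_{k'}|\bigr)\,|x_k-x_{k'}|^{m-|\alpha|},
\]
and $\omega\to 0$ supplies the vanishing that ``the same estimate'' does not. This is a small fix, but as written the deduction of (GR2) is not justified.
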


Let $\mathscr{H}=\left( f\left( x\right) +I\left( x\right) \right) _{x\in E}$
be a proper bundle. Then $\mathscr{H}=\left( \tilde{f}\left( x\right)
+I\left( x\right) \right) _{x\in E}$ whenever $\tilde{f}\left( x\right)
-f\left( x\right) \in I\left( x\right) $ for all $x\in E$. Thus, $\mathscr{H}
$ uniquely determines $I\left( x\right) $, but it determines $f\left(
x\right) $ only modulo $I\left( x\right) $. If $\varphi \in C^{\infty }_0
\left( \mathbb{R}^{n}\right) $, then we define 
\begin{equation*}
\varphi \odot \mathscr{H}=\left( J_{x}^{\left( m\right) }\varphi \odot
_{x}f\left( x\right) +I\left( x\right) \right) _{x\in E}.
\end{equation*}%
Thus, $\varphi \odot \mathscr{H}$ is again a proper bundle. Note that our
definition of $\varphi \odot \mathscr{H}$ is independent of the choice of $%
f\left( x\right) $, i.e., it is unaffected by changing $f\left( x\right) $
to $\tilde{f}\left( x\right) $ when $f\left( x\right) -\tilde{f}\left(
x\right) \in I\left( x\right) $. If $F$ is a section of $\mathscr{H}$, then $%
\varphi F$ is a section of $\varphi \odot \mathscr{H}$.

The operation $\mathscr{H}\mapsto \varphi \odot \mathscr{H}$ is related to
the (strong) Glaeser refinement as follows:

\begin{lemma}
\label{Lemma-NDP1} Let $\mathscr{H}=(H_{x})_{x\in E}$ be a bundle with
strong Glaeser refinement $\tilde{\mathscr{H}}=(\tilde{H}_{x})_{x\in E}$, and let $%
\varphi \in C^{\infty }_0 (\mathbb{R}^{n})$. If $P_{0}\in \tilde{H}_{x_{0}}$%
, then $J_{x_{0}}^{(m)}\varphi \odot _{x_{0}}P_{0}$ belongs to the fiber of $%
\mathscr{G}(\varphi \odot \mathscr{H})$ at $x_{0}$.
\end{lemma}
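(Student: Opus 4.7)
Writing $\mathscr{H} = (f(x) + I(x))_{x \in E}$, we have $P_{0} = f(x_{0}) + h_{0}$ for some $h_{0} \in I(x_{0})$, so $Q_{0} := J_{x_{0}}^{(m)}\varphi \odot_{x_{0}} P_{0} = J_{x_{0}}^{(m)}\varphi \odot_{x_{0}} f(x_{0}) + J_{x_{0}}^{(m)}\varphi \odot_{x_{0}} h_{0}$. Since $I(x_{0})$ is an $\mathscr{R}_{x_{0}}^{m}$-submodule, $Q_{0}$ lies in the fiber of $\varphi \odot \mathscr{H}$ at $x_{0}$. To show $Q_{0}$ belongs to the fiber of $\mathscr{G}(\varphi \odot \mathscr{H})$ at $x_{0}$, I must verify (GR1) and (GR2) for $Q_{0}$ relative to $\varphi \odot \mathscr{H}$. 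The strategy is: for given $x_{1},\dots,x_{\bar{k}} \in E$, pick $P_{k} \in H_{x_{k}}$ ($k=1,\dots,\bar{k}$) that nearly attain the minimum in the $\mathscr{H}$-Glaeser sum determined by $P_{0}$, and take $Q_{k} := J_{x_{k}}^{(m)}\varphi \odot_{x_{k}} P_{k} \in (\varphi \odot \mathscr{H})_{x_{k}}$ as test elements for the Glaeser sum of $\varphi \odot \mathscr{H}$.

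The key computation is the decomposition
\begin{equation*}
Q_{k} - Q_{k'} = J_{x_{k}}^{(m)}\bigl(\varphi(P_{k} - P_{k'})\bigr) + \bigl[J_{x_{k}}^{(m)}(\varphi P_{k'}) - J_{x_{k'}}^{(m)}(\varphi P_{k'})\bigr].
\end{equation*}
For the first summand, using $\partial^{\alpha} J_{x_{k}}^{(m)}g(x_{k}) = \partial^{\alpha}g(x_{k})$ for $|\alpha|\leq m$, Leibniz's rule gives
\begin{equation*}
\frac{|\partial^{\alpha}(\varphi(P_{k}-P_{k'}))(x_{k})|}{|x_{k}-x_{k'}|^{m-|\alpha|}} \leq C_{\varphi} \sum_{\gamma \leq \alpha} \frac{|\partial^{\gamma}(P_{k}-P_{k'})(x_{k})|}{|x_{k}-x_{k'}|^{m-|\gamma|}} \cdot |x_{k}-x_{k'}|^{|\alpha-\gamma|},
\end{equation*}
so (using $|x_{k}-x_{k'}| \leq \operatorname{diam}(E)$) the squared contribution is bounded by $C'$ times the corresponding summand of the $\mathscr{H}$-Glaeser sum for $P_{0},\dots,P_{\bar{k}}$. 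For the second summand, since $\varphi P_{k'} \in C^{\infty}_{0}$, Proposition \ref{taylor'sthm} with $\bar{\bar{m}}=m+1$ and $\bar{m}=m$ yields
\begin{equation*}
\frac{|\partial^{\alpha}[J_{x_{k}}^{(m)}(\varphi P_{k'}) - J_{x_{k'}}^{(m)}(\varphi P_{k'})](x_{k})|}{|x_{k}-x_{k'}|^{m-|\alpha|}} \leq C\,\|\varphi P_{k'}\|_{C^{m+1}} \cdot |x_{k}-x_{k'}|.
\end{equation*}

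The main obstacle is to make $\|\varphi P_{k'}\|_{C^{m+1}}$ uniform in $k'$, since we chose $P_{k'}$ as a near-minimizer whose values depend on $x_{k'}$. Here I exploit (GR1) for $\mathscr{H}$ applied to the pair $(k',0)$: it gives $|\partial^{\gamma}(P_{k'}-P_{0})(x_{k'})| \leq K^{1/2}|x_{k'}-x_{0}|^{m-|\gamma|}$ for $|\gamma|\leq m$, which, combined with the fact that $P_{0}$ is a fixed polynomial and $E$ compact, produces a bound $|\partial^{\gamma}P_{k'}(x_{k'})| \leq B$ independent of $k'$. Since $\deg P_{k'} \leq m$, the finite Taylor expansion of $P_{k'}$ around $x_{k'}$ extends this to a uniform bound on $|\partial^{\beta}P_{k'}(y)|$ for $y \in \operatorname{supp}\varphi$ and $|\beta|\leq m$ (noting $\partial^{\beta}P_{k'} \equiv 0$ for $|\beta|>m$), whence $\|\varphi P_{k'}\|_{C^{m+1}} \leq C_{\varphi,\mathscr{H},P_{0}}$. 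Summing both bracket estimates over pairs $(k,k')$ produces
\begin{equation*}
\sum_{\substack{k \neq k'\\ |\alpha|\leq m}} \frac{|\partial^{\alpha}(Q_{k}-Q_{k'})(x_{k})|^{2}}{|x_{k}-x_{k'}|^{2(m-|\alpha|)}} \leq C_{1}\cdot(\text{Glaeser sum for }P\text{'s}) + C_{2}\cdot\max_{k,k'}|x_{k}-x_{k'}|^{2}.
\end{equation*}
For (GR1), both terms on the right are uniformly bounded in $x_{1},\dots,x_{\bar{k}}\in E$, using (GR1) for $P_{0}$ and $\operatorname{diam}(E)<\infty$. For (GR2), as $x_{1},\dots,x_{\bar{k}}\to x_{0}$ the first term tends to $0$ by (GR2) for $P_{0}$ (taking the near-minimizing $\epsilon \to 0$), and the second term tends to $0$ because $|x_{k}-x_{k'}| \to 0$. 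This verifies (GR1) and (GR2) for $Q_{0}$, so $Q_{0}$ lies in the fiber of $\mathscr{G}(\varphi \odot \mathscr{H})$ at $x_{0}$.
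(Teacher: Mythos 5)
Your proof is correct and follows essentially the same approach as the paper's: the same test elements $Q_{k}=J_{x_{k}}^{(m)}\varphi \odot _{x_{k}}P_{k}$, the same use of (GR1) to obtain a uniform $C^{m+1}$ bound on $\varphi P_{k'}$, and a decomposition of $Q_{k}-Q_{k'}$ that is the paper's $R^{(1)}_{kk'}+R^{(2)}_{kk'}+R^{(3)}_{kk'}$ with $R^{(2)}_{kk'}+R^{(3)}_{kk'}$ collapsed into your single jet-difference term. The only nit is that Proposition \ref{taylor'sthm} as stated controls $\pi _{x}^{\bar{\bar{m}}\rightarrow \bar{m}}J_{y}^{(\bar{\bar{m}})}F-J_{x}^{(\bar{m})}F$ rather than $J_{x}^{(m)}F-J_{y}^{(m)}F$, so it does not apply verbatim to your second bracket; the bound you want is nevertheless correct, being the standard Taylor-remainder estimate for a $C^{m+1}$ function.
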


\begin{proof}
We write $K_{1},K_{2},\cdots $ to denote constants determined by $%
m,n,D,\varphi ,E,\mathscr{H},$ $P_{0}$.

Let $x_{1},\cdots ,x_{\bar{k}}\in E$. Because (GR1) holds for $x_{0},P_{0},%
\mathscr{H}$, there exist $P_{1}\in H_{x_{1}},\cdots ,P_{\bar{k}}\in H_{x_{%
\bar{k}}}$ such that

\begin{itemize}
\item[\refstepcounter{equation}\text{(\theequation)}\label{X1}] $%
|\partial^\alpha (P_i -P_j)(x_j)| \leq K_1 |x_i -x_j|^{m- |\alpha|}$ for $%
|\alpha| \leq m, i,j = 0, \cdots, \bar{k}$.
\end{itemize}

(We adopt the convention that $0^0= 0$ to deal with the degenerate case $%
|\alpha| = m, x_i = x_j$.)

Taking $j=0$, we see that

\begin{itemize}
\item[\refstepcounter{equation}\text{(\theequation)}\label{X2}] {$%
|\partial^\alpha P_i(x_0)| \leq K_2$ for $|\alpha| \leq m, i =0, \cdots, 
\bar{k}$.}
\end{itemize}

By expanding $P_i(y)$ in powers of $y-x_0$, we deduce from \eqref{X2}

\begin{itemize}
\item[\refstepcounter{equation}\text{(\theequation)}\label{X3}] {$\vert
\partial^{\alpha} P_i(y) \vert \leq K_3$ for $|\alpha|\leq m, |y-x_0| \leq 
\text{diameter} (E), i =0, \cdots, \bar{k}$.}
\end{itemize}

In particular,

\begin{itemize}
\item[\refstepcounter{equation}\text{(\theequation)}\label{X4}] {$\vert
\partial^{\alpha} P_i(x_j) \vert \leq K_3$ for $|\alpha|\leq m, i, j =0,
\cdots, \bar{k}$.}
\end{itemize}

For $k =0, \cdots, \bar{k}$, let

\begin{itemize}
\item[\refstepcounter{equation}\text{(\theequation)}\label{X4a}] {$%
P_k^{\#}=J_{x_k}^{(m)}\varphi \odot_{x_k} P_k$. Thus,}
\end{itemize}

\begin{itemize}
\item[\refstepcounter{equation}\text{(\theequation)}\label{X5}] {$P_k^{\#}$
belongs to the fiber of $\varphi \odot \mathscr{H}$ at $x_k$, for each $k =
0, \cdots, \bar{k}$.}
\end{itemize}

We estimate the derivatives of $P_{k}^{\#}-P_{k^{\prime }}^{\#}$. To do so,
we write 
\begin{eqnarray}
P_{k}^{\#}-P_{k^{\prime }}^{\#} &=&\left[ J_{x_{k}}^{\left( m\right)
}\varphi \odot _{x_{k}}\left( P_{k}-P_{k^{\prime }}\right) \right] +\left[
\left( J_{x_{k}}^{\left( m\right) }\varphi -J_{x_{k^{\prime }}}^{\left(
m\right) }\varphi \right) \odot _{x_{k}}P_{k^{\prime }}\right]  \label{X6} \\
&&+\left[ J_{x_{k^{\prime }}}^{\left( m\right) }\varphi \odot
_{x_{k}}P_{k^{\prime }}-J_{x_{k^{\prime }}}^{\left( m\right) }\varphi \odot
_{x_{k^{\prime }}}\odot _{x_{k^{\prime }}}P_{k^{\prime }}\right]  \notag \\
&\equiv &R_{kk^{\prime }}^{\left( 1\right) }+R_{kk^{\prime }}^{\left(
2\right) }+R_{kk^{\prime }}^{\left( 3\right) }\text{.}  \notag
\end{eqnarray}%
From \eqref{X1}, we have

\begin{itemize}
\item[\refstepcounter{equation}\text{(\theequation)}\label{X7}] {$\left\vert
\partial ^{\alpha }R_{kk^{\prime }}^{\left( 1\right) }\left( x_{k}\right)
\right\vert \leq K_{4}\left\vert x_{k}-x_{k^{\prime }}\right\vert
^{m-\left\vert \alpha \right\vert }$ for $\left\vert \alpha \right\vert \leq
m$.}
\end{itemize}

From (\ref{X4}) and Taylor's theorem for $\varphi $, we have

\begin{itemize}
\item[\refstepcounter{equation}\text{(\theequation)}\label{X8}] {$\left\vert
\partial ^{\alpha }R_{kk^{\prime }}^{\left( 2\right) }\left( x_{k}\right)
\right\vert \leq K_{5}\left\vert x_{k}-x_{k^{\prime }}\right\vert
^{m-\left\vert \alpha \right\vert }$ for $\left\vert \alpha \right\vert \leq
m$.}
\end{itemize}

Moreover,

\begin{itemize}
\item[\refstepcounter{equation}\text{(\theequation)}\label{X9}] {$%
R_{kk^{\prime }}^{\left( 3\right) }=J_{x_{k}}^{\left( m\right) }G_{k^{\prime
}}-J_{x_{k^{\prime }}}^{\left( m\right) }G_{k^{\prime }}$, where $%
G_{k^{\prime }}=\left( J_{x_{k^{\prime }}}^{\left( m\right) }\varphi \right)
\cdot P_{k^{\prime }}$.}
\end{itemize}

Because $J_{x_{k^{\prime }}}^{\left( m\right) }\varphi $ and $P_{k^{\prime
}} $ are polynomials of degree at most $m$, (\ref{X3}) yields the estimate

\begin{itemize}
\item[\refstepcounter{equation}\text{(\theequation)}\label{X9a}] {$%
\left\vert \partial ^{\alpha }G_{k^{\prime }}\left( y\right) \right\vert
\leq K_{6}$ for $\left\vert \alpha \right\vert =m+1$ and $\left\vert
y-x\right\vert \leq $ diameter $\left( E\right) $.}
\end{itemize}

Consequently, (\ref{X9}) and Taylor's theorem tell us that

\begin{itemize}
\item[\refstepcounter{equation}\text{(\theequation)}\label{X10}] {$%
\left\vert \partial ^{\alpha }R_{kk^{\prime }}^{\left( 3\right) }\left(
x\right) \right\vert \leq K_{7}\left\vert x_{k}-x_{k^{\prime }}\right\vert
^{m+1- |\alpha|}\leq K_{8}\left\vert x_{k}-x_{k^{\prime }}\right\vert ^{m-
|\alpha|}$ for $|\alpha|\leq m, k,k^{\prime }=0,\cdots, \bar{k}$.}
\end{itemize}

Putting (\ref{X7}), (\ref{X8}), (\ref{X10}) into (\ref{X6}), we learn that

\begin{itemize}
\item[\refstepcounter{equation}\text{(\theequation)}\label{X11}] {$%
\left\vert \partial ^{\alpha }\left( P_{k}^{\#}-P_{k^{\prime }}^{\#}\right)
\left( x_{k}\right) \right\vert \leq K_{9}\left\vert x_{k}-x_{k^{\prime
}}\right\vert ^{m-\left\vert \alpha \right\vert }$ for $\left\vert \alpha
\right\vert \leq m$, $k,k^{\prime }=0,\cdots ,\bar{k}$.}
\end{itemize}

Together with (\ref{X5}), this tells us that

\begin{itemize}
\item[\refstepcounter{equation}\text{(\theequation)}\label{X12}] {$%
P_{0}^{\#}=J_{x_{0}}^{\left( m\right) }\varphi \odot _{x_{0}}P_{0}$
satisfies (GR1) for the point $x_{0}$ and the bundle $\varphi \odot %
\mathscr{H}$.}
\end{itemize}

Similarly, we establish (GR2) for $P_{0}^{\#}$, $x_{0}$, $\varphi \odot %
\mathscr{H}$.

We sketch the argument. Let $0<\varepsilon <1$ be given. Let $\delta >0$ be
as in (GR2) for $P_{0}$, $x_{0}$, $\mathscr{H}$, and let $x_{1},\cdots ,x_{%
\bar{k}}\in B\left( x_{0},\hat{\delta}\right) \cap E$, where $\hat{\delta}%
\in \left( 0,\delta \right) $ will be picked below.

Then there exist $P_{1}\in H_{x_{1}},\cdots ,P_{\bar{k}}\in H_{x_{\bar{k}}}$
such that

\begin{itemize}
\item[\refstepcounter{equation}\text{(\theequation)}\label{X13}] {$%
\left\vert \partial ^{\alpha }\left( P_{i}-P_{j}\right) \left( x_{j}\right)
\right\vert \leq \varepsilon \left\vert x_{i}-x_{j}\right\vert
^{m-\left\vert \alpha \right\vert }$ for $\left\vert \alpha \right\vert \leq
m$, $i,j=0,\cdots ,\bar{k}$.}
\end{itemize}

Thus, (\ref{X2})$\cdots $(\ref{X4}) hold as before. Defining $P_{k}^{\#}$ $%
\left( k=0,1,\cdots ,\bar{k}\right) $ by (\ref{X4a}), we again have (\ref{X5}%
) and (\ref{X6}).

Estimate (\ref{X13}) implies that

\begin{itemize}
\item[\refstepcounter{equation}\text{(\theequation)}\label{X14}] {$%
\left\vert \partial ^{\alpha }R_{kk^{\prime }}^{\left( 1\right) }\left(
x_{k}\right) \right\vert \leq K_{10}\varepsilon \left\vert
x_{k}-x_{k^{\prime }}\right\vert ^{m-\left\vert \alpha \right\vert }$ for $%
\left\vert \alpha \right\vert \leq m,$ $k,k^{\prime }=0,\cdots ,\bar{k}$.}
\end{itemize}

Estimate (\ref{X4}) and Taylor's theorem for $\varphi $ yield

\begin{itemize}
\item[\refstepcounter{equation}\text{(\theequation)}\label{X15}] {$%
\left\vert \partial ^{\alpha}R_{kk^{\prime }}^{\left( 2\right) }\left(
x_{k}\right) \right\vert \leq K_{11}\left\vert x_{k}-x_{k^{\prime
}}\right\vert ^{\left( m+1\right) -\left\vert \alpha \right\vert }$ for $%
\left\vert \alpha \right\vert \leq m,k,k^{\prime }=0,\cdots ,\bar{k}$.}
\end{itemize}

Moreover, (\ref{X9}) and (\ref{X9a}) hold, from which we have

\begin{itemize}
\item[\refstepcounter{equation}\text{(\theequation)}\label{X16}] {$%
\left\vert \partial ^{\alpha }R_{kk^{\prime }}^{\left( 3\right) }\left(
x_{k}\right) \right\vert \leq K_{12}\left\vert x_{k}-x_{k^{\prime
}}\right\vert ^{\left( m+1\right) -\left\vert \alpha \right\vert }$ for $%
\left\vert \alpha \right\vert \leq m$, $k,k^{\prime }=0,\cdots ,\bar{k}$.}
\end{itemize}

Taking $\hat{\delta}\leq \varepsilon $ so that $\left\vert
x_{k}-x_{k^{\prime }}\right\vert ^{\left( m+1\right) -\left\vert \alpha
\right\vert }\leq \varepsilon \left\vert x_{k}-x_{k^{\prime }}\right\vert
^{m-\left\vert \alpha \right\vert }$, we deduce from (\ref{X14}), (\ref{X15}%
), (\ref{X16}) and (\ref{X6}) that

\begin{itemize}
\item[\refstepcounter{equation}\text{(\theequation)}\label{X17}] {$%
\left\vert \partial ^{\alpha }\left( P_{k}^{\#}-P_{k^{\prime }}^{\#}\right)
\left( x_{k}\right) \right\vert \leq K_{13}\varepsilon \left\vert
x_{k}-x_{k^{\prime }}\right\vert ^{m-\left\vert \alpha \right\vert }$ for $%
\left\vert \alpha \right\vert \leq m$, $k,k^{\prime }=0,\cdots ,\bar{k}$.}
\end{itemize}

From (\ref{X4a}) and (\ref{X17}), we see that

\begin{itemize}
\item[\refstepcounter{equation}\text{(\theequation)}\label{X18}] {$%
P_{0}^{\#}=J_{x_{0}}^{\left( m\right) }\varphi \odot _{x_{0}}P_{0}$
satisfies (GR2) for the point $x_{0}$ and the bundle $\varphi \odot %
\mathscr{H}.$}
\end{itemize}

Thanks to (\ref{X12}) and (\ref{X18}), we know that $J_{x_{0}}^{\left(
m\right) }\varphi \odot _{x_{0}}P_{0}$ belongs to the fiber at $x_{0}$ of $%
\mathscr{G}\left( \varphi \odot \mathscr{H}\right) $.

The proof of Lemma \ref{Lemma-NDP1} is complete.
\end{proof}

A \underline{homogeneous bundle} is a bundle of the form $\mathscr{H}=\left(
I\left( x\right) \right) _{x\in E}$ with each $I\left( x\right) \subset 
\mathcal{P}^{(m)}\left( \mathbb{R}^{n},\mathbb{R}^{D}\right) $ an $%
\mathscr{R}_{x}^{m}$-submodule. (That is, we can take $f\equiv 0$ in $%
\mathscr{H}=\left( f\left( x\right) +I\left( x\right) \right) _{x\in E}$.)

The (strong) Glaeser refinement of a homogenous bundle is again a
homogeneous bundle.

\begin{itemize}
\item[\refstepcounter{equation}\text{(\theequation)}\label{NDP7}] Let $%
\mathscr{H}=\left( f\left( x\right) +I\left( x\right) \right) _{x\in E}$ be
a proper bundle, and let $\left( \tilde{I}\left( x\right) \right) _{x\in E}$
be the (strong) Glaeser refinement of $\left( I\left( x\right) \right)
_{x\in E}$. If for each $x\in E$, $g\left( x\right) $ belongs to the fiber
at $x$ of the strong Glaeser refinement $\mathscr{G}\left( \mathscr{H}%
\right) $, then it follows that $\mathscr{G}\left( \mathscr{H}\right)
=\left( g\left( x\right) +\tilde{I}\left( x\right) \right) _{x\in E}$.
\end{itemize}

Therefore, Lemma \ref{Lemma-NDP1} implies the following.

\begin{corollary}
\label{corollary-NDP1}Let $\mathscr{H}$ be a bundle, and suppose $\mathscr{G}%
\left( \mathscr{H}\right) $ is proper. Then for any $\varphi \in C^{\infty
}_0\left( \mathbb{R}^{n}\right) $, we have $\mathscr{G}\left( \varphi \odot %
\mathscr{H}\right) =\varphi \odot \mathscr{G}\left( \mathscr{H}\right) $.
\end{corollary}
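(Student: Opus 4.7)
The plan is to reduce everything to Lemma \ref{Lemma-NDP1} together with the structural observation \eqref{NDP7}, which tells us that a proper bundle is completely determined by one basepoint in each fiber together with the homogeneous part.

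First I would write $\mathscr{H}=(f(x)+I(x))_{x\in E}$. Since $\mathscr{G}(\mathscr{H})$ is proper by hypothesis, observation \eqref{NDP7} applied to $\mathscr{H}$ gives
\[
\mathscr{G}(\mathscr{H})=\bigl(\tilde{f}(x)+\tilde{I}(x)\bigr)_{x\in E},
\]
where $\tilde{f}(x)$ is any choice of element of the fiber $\mathscr{G}(\mathscr{H})_x$ and $(\tilde{I}(x))_{x\in E}$ is the strong Glaeser refinement of the homogeneous bundle $(I(x))_{x\in E}$. From the definition of $\varphi\odot(\cdot)$ we then read off
\[
\varphi\odot\mathscr{G}(\mathscr{H})=\bigl(J^{(m)}_{x}\varphi\odot_x \tilde{f}(x)+\tilde{I}(x)\bigr)_{x\in E}.
\]

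Next, I would compute the right-hand side differently: starting from $\varphi\odot\mathscr{H}=\bigl(J^{(m)}_x\varphi\odot_x f(x)+I(x)\bigr)_{x\in E}$, whose homogeneous part is again $(I(x))_{x\in E}$, I would apply Lemma \ref{Lemma-NDP1} with $P_0:=\tilde{f}(x_0)\in\mathscr{G}(\mathscr{H})_{x_0}$. The lemma yields $J^{(m)}_{x_0}\varphi\odot_{x_0}\tilde{f}(x_0)\in\mathscr{G}(\varphi\odot\mathscr{H})_{x_0}$ for every $x_0\in E$. In particular $\mathscr{G}(\varphi\odot\mathscr{H})$ is proper, so observation \eqref{NDP7} applies to the bundle $\varphi\odot\mathscr{H}$: its strong Glaeser refinement takes the form $(g(x)+\tilde{I}(x))_{x\in E}$, where the homogeneous part is the strong Glaeser refinement of $(I(x))_{x\in E}$, which is exactly $(\tilde{I}(x))_{x\in E}$, and $g(x)$ can be chosen to be any element of the fiber at $x$. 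Taking $g(x):=J^{(m)}_x\varphi\odot_x\tilde{f}(x)$ gives
\[
\mathscr{G}(\varphi\odot\mathscr{H})=\bigl(J^{(m)}_x\varphi\odot_x\tilde{f}(x)+\tilde{I}(x)\bigr)_{x\in E}=\varphi\odot\mathscr{G}(\mathscr{H}),
\]
which is the desired identity.

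The entire argument is a bookkeeping exercise, not a hard inequality: the only substantive input is Lemma \ref{Lemma-NDP1}, and the only potential pitfall is keeping track of which $\mathscr{R}_x^m$-submodule plays the role of the homogeneous part at each step. The key observation is that by the definition given for $\varphi\odot\mathscr{H}$, the submodule $I(x)$ is preserved (rather than being replaced by $J^{(m)}_x\varphi\odot_x I(x)$), so the homogeneous parts of $\mathscr{G}(\varphi\odot\mathscr{H})$ and $\varphi\odot\mathscr{G}(\mathscr{H})$ automatically coincide, and the two bundles agree once they share a single basepoint in every fiber.
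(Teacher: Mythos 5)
Your argument is correct and essentially identical to the paper's own proof: both proofs choose a basepoint in each fiber of $\mathscr{G}(\mathscr{H})$, invoke Lemma \ref{Lemma-NDP1} to place $J_x^{(m)}\varphi\odot_x\tilde f(x)$ in the fiber of $\mathscr{G}(\varphi\odot\mathscr{H})$, and then use observation \eqref{NDP7} to identify the homogeneous part of that Glaeser refinement with $\tilde I(x)$. The only cosmetic difference is that the paper chooses $f(x)$ directly inside $\mathscr{G}(\mathscr{H})_x$ from the outset, while you introduce a separate basepoint $f(x)$ for $\mathscr{H}$ and a second one $\tilde f(x)$ for $\mathscr{G}(\mathscr{H})$; this is harmless since $\varphi\odot\mathscr{H}$ is independent of the choice of basepoint modulo $I(x)$.
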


\begin{proof}
For each $x\in E$, pick $f\left( x\right) $ in the fiber of $\mathscr{G}%
\left( \mathscr{H}\right) $ at $x$. In particular, $f\left( x\right) $
belongs to the fiber of $\mathscr{H}$ at $x$, so $\mathscr{H}=\left( f\left(
x\right) +I\left( x\right) \right) _{x\in E}$ for a family of $\mathscr{R}%
_{x}^{m}$-submodules $I\left( x\right) \subset \mathcal{P}^{(m)}\left( 
\mathbb{R}^{n},\mathbb{R}^{D}\right) $. Let $\left( \tilde{I}\left( x\right)
\right) _{x\in E}$ be the strong Glaeser refinement of $\left( I\left(
x\right) \right) _{x\in E}$. We have $\mathscr{G}\left( \mathscr{H}\right)
=\left( f\left( x\right) +\tilde{I}\left( x\right) \right) _{x\in E}$ by %
\eqref{NDP7}, hence $\varphi \odot \mathscr{G}\left( \mathscr{H}\right)
=\left( J_{x}^{(m)}\varphi \odot _{x}f\left( x\right) +\tilde{I}\left(
x\right) \right) _{x\in E};$ also $\varphi \odot \mathscr{H}=\left(
J_{x}^{(m)}\varphi \odot _{x}f\left( x\right) +I\left( x\right) \right)
_{x\in E}$, hence if $\mathscr{G}\left( \varphi \odot \mathscr{H}\right) $
is a proper bundle, then (by (\ref{NDP7})) the fiber at every $x \in E$ of $%
\mathscr{G}(\varphi \odot \mathscr{H})$ has the form 
\begin{equation}
g\left( x\right) +\tilde{I}\left( x\right)  \label{NDP8}
\end{equation}%
for some $g\left( x\right) $. According to the preceding lemma, $\mathscr{G}%
(\varphi \odot \mathscr{H})$ is a proper bundle; and, with $g$ as in %
\eqref{NDP8}, we have $J_{x}^{(m)}\varphi \odot _{x}f\left( x\right) \in
g\left( x\right) +\tilde{I}\left( x\right) $. Therefore, the fiber at $x$ of 
$\mathscr{G}\left( \varphi \odot \mathscr{H}\right) $ is equal to $%
J_{x}^{(m)}\varphi \odot _{x}f\left( x\right) +\tilde{I}\left( x\right) $.
That's also the fiber at $x$ of $\varphi \odot \mathscr{G}\left( \mathscr{H}%
\right) $, as we saw above. Thus, $\mathscr{G}\left( \varphi \odot %
\mathscr{H}\right) =\varphi \odot \mathscr{G}\left( \mathscr{H}\right) $, as
claimed.
\end{proof}

\section{Bundles Determined by Smooth Functions\label{BDSF}}

\textbf{Setup.} Let $E\subset \mathbb{R}^{n}$ be compact, semialgebraic. For 
$x\in E$, let $I\left( x\right) \subset \mathcal{P}^{\left( m\right) }\left( 
\mathbb{R}^{n},\mathbb{R}^{D}\right) $ be an $\mathscr{R}_{x}^{m}$%
-submodule, depending semialgebraically on $x$. For $l\geq 0$, let $\left(
I^{\left( l\right) }\left( x\right) \right) _{x\in E}$ denote the $l^{th}$
(strong) Glaeser refinement of the homogeneous bundle $\left( I\left(
x\right) \right) _{x\in E}$. Thus, $I^{\left( 0\right) }\left( x\right)
=I\left( x\right) $, and $\left( I^{\left( l+1\right) }\left( x\right)
\right) _{x\in E}=\mathscr{G} \left( \left( I^{\left( l\right) }\left(
x\right) \right)_{x\in E}\right) $. Note that $I^{(l)}(x)$ depends
semialgebraically on $x$, by an obvious induction on $l$. Let $T\left(
x\right) :\mathcal{P}^{(\bar{m})}\left( \mathbb{R}^{n},\mathbb{R}^{M}\right)
\rightarrow \mathcal{P}^{(m)}\left( \mathbb{R}^{n},\mathbb{R}^{D}\right) $
be a linear map, depending semialgebraically on $x\in E$. Here, $\bar{m}\geq
m$. For $f \in C^{\infty }_0\left( \mathbb{R}^{n},\mathbb{R}^{M}\right) $,
we write $\mathscr{H}_{f}$ to denote the bundle 
\begin{equation}
\mathscr{H}_{f}=\left( T\left( x\right) J_{x}^{\left( \bar{m}\right)
}f+I\left( x\right) \right) _{x\in E}.  \label{B0}
\end{equation}%
We make the following

\underline{Assumption}:

\begin{equation}
\mathscr{H}_{\varphi f}=\varphi \odot \mathscr{H}_{f}\text{ for any }%
C^{\infty }_0\text{ scalar-valued function }\varphi \text{.}  \label{B1}
\end{equation}%
For $l\geq 0$, let $\mathscr{H}_{f}^{\left( l\right) }$ denote the $l^{th}$
(strong) Glaeser refinement of $\mathscr{H}_{f}$. Thus, $\mathscr{H}%
_{f}^{\left( 0\right) }=\mathscr{H}_{f}$ and $\mathscr{H}_{f}^{\left(
l+1\right) }=\mathscr{G}\left( \mathscr{H}_{f}^{\left( l\right) }\right) $.

Under the above assumptions, we will prove the following result:

\begin{lemma}[Main Lemma on $\mathscr{H}_{f}$]
\label{Main-Lemma-onHf}For each $l\geq 0$, there exist an integer $\bar{\bar{%
m}}\geq \bar{m}$, a finite list of linear differential operators $L_{\nu }$ $%
\left( \nu =1,\cdots ,\nu _{\max }\right) $, and a linear map 
\begin{equation*}
T^{\left( l\right) }\left( x\right) :\mathcal{P}^{(\bar{\bar{m}})}\left( 
\mathbb{R}^{n},\mathbb{R}^{M}\right) \rightarrow \mathcal{P}^{(m)}\left( 
\mathbb{R}^{n},\mathbb{R}^{D}\right) ,
\end{equation*}%
with the following properties.

\begin{itemize}
\item Each $L_{\nu }$ has semialgebraic coefficients, and maps functions in $%
C^{\infty }\left( \mathbb{R}^{n},\mathbb{R}^{M}\right) $ to scalar-valued
functions on $\mathbb{R}^{n}$.

\item Each $L_{\nu }$ has order at most $\bar{\bar{m}}$.

\item $T^{\left( l\right) }\left( x\right) $ depends semialgebraically on $%
x. $

\item Let $f\in C^{\infty }_0\left( \mathbb{R}^{n},\mathbb{R}^{M}\right) $.
Then $\mathscr{H}_{f}^{\left( l\right) }$ is a proper bundle (i.e., all of
its fibers are non-empty) if and only if $L_{\nu }f=0$ on $E$ for each $\nu
=1,\cdots ,\nu _{\max }$.

\item Let $f\in C^{\infty }_0\left( \mathbb{R}^{n},\mathbb{R}^{M}\right) $.
If $\mathscr{H}_{f}^{\left( l\right) }$ is a proper bundle, then $\mathscr{H}%
_{f}^{\left( l\right) }=\left( T^{\left( l\right) }\left( x\right)
J_{x}^{\left( \bar{\bar{m}}\right) }f+I^{\left( l\right) }\left( x\right)
\right) _{x\in E}$.
\end{itemize}
\end{lemma}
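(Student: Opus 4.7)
The plan is to induct on $l$. The base case $l=0$ is immediate: $\mathscr{H}_f^{(0)} = \mathscr{H}_f$ is always proper because each fiber contains $T(x)J_x^{(\bar{m})}f$, so we take $T^{(0)} = T$, $\bar{\bar m} = \bar{m}$, and an empty list of operators. For the inductive step, assume the lemma holds at level $l$ with data $\bar{\bar m}_l$, $T^{(l)}$, and operators $L_1, \ldots, L_{\nu_{\max}}$. When $f$ satisfies $L_\nu f = 0$ on $E$ for all $\nu$, the inductive hypothesis gives $\mathscr{H}_f^{(l)} = \bigl(T^{(l)}(x) J_x^{(\bar{\bar m}_l)} f + I^{(l)}(x)\bigr)_{x \in E}$, and we must analyze $\mathscr{H}_f^{(l+1)} = \mathscr{G}(\mathscr{H}_f^{(l)})$.

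The central idea is to recast conditions (GR1) and (GR2) of the strong Glaeser refinement so that Propositions~\ref{SAQF1} and~\ref{SAQF2} apply. For each $x_0 \in E$ and each choice $(x_1, \ldots, x_{\bar k}) \in E^{\bar k}$, define
\[
\hat Q(x_0, P_0, x_1, P_1, \ldots, x_{\bar k}, P_{\bar k}) \;:=\; \min_{Q_i \in I^{(l)}(x_i),\, i=1,\ldots,\bar k}\ \sum_{\substack{0 \le k \ne k' \le \bar k \\ x_k \ne x_{k'}}}\ \sum_{j=1}^D \sum_{|\alpha| \le m} \frac{\bigl|\partial^\alpha(\hat P_k - \hat P_{k'})_j(x_k)\bigr|^2}{|x_k - x_{k'}|^{2(m-|\alpha|)}},
\]
where $\hat P_0 := P_0$ and $\hat P_i := T^{(l)}(x_i) P_i + Q_i$ for $i \ge 1$. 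Minimization of a positive semidefinite quadratic form over the semialgebraic linear subspaces $I^{(l)}(x_i)$ returns another positive semidefinite semialgebraic quadratic form; the growth bound \eqref{PSQF8} holds because plugging in $Q_i = 0$ already yields a bound of the required shape, with Taylor's theorem absorbing the denominators. Substituting $P_i = J_{x_i}^{(\bar{\bar m}_l)}f$ makes the sup of $\hat Q$ over $(x_1,\ldots,x_{\bar k})\in E^{\bar k}$ coincide with the min in (GR1), and the limit in (GR2) becomes a limit of $\hat Q$ as the points tend to $x_0$. Propositions~\ref{SAQF1} and~\ref{SAQF2} applied to $\hat Q$ -- with $I = M$, $E^{+} = E^{++} = E \times E^{\bar k}$, no $z$-variables -- then produce an integer $\bar{\bar m}_{l+1} \ge \bar{\bar m}_l$ together with semialgebraic families of vector spaces $H^{\textup{bdd}}(x_0, x_1, \ldots, x_{\bar k})$ and $H^{\lim}(x_0)$ characterising (GR1) and (GR2) respectively as explicit membership conditions on appropriate jets of $f$.

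With these reductions in hand, the fiber of $\mathscr{H}_f^{(l+1)}$ at $x_0$ is the intersection of $H_{x_0}^{(l)}$ with the semialgebraic system of linear constraints coming from $H^{\textup{bdd}}$ and $H^{\lim}$. I would use the quantifier-elimination and parametric-linear-solvability algorithms from Section~\ref{sec2.5} (notably Algorithm~11.3 of~\cite{basu}) to repackage non-emptiness of this intersection at every $x \in E$ as a finite list of linear differential operators $L_\nu^{(l+1)}$ of the form $L_\nu f(x) = \sum a_{\nu i\alpha}(x)\partial^\alpha f_i(x)$ with semialgebraic coefficients, whose pointwise vanishing on $E$ characterises properness of $\mathscr{H}_f^{(l+1)}$. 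When properness holds, the same algorithms produce a semialgebraic selector $T^{(l+1)}(x)J_x^{(\bar{\bar m}_{l+1})}f$ in the fiber; the homogeneous part of the fiber is $I^{(l+1)}(x)$ by \eqref{NDP7} applied to $\mathscr{H}_f^{(l)}$; and assumption \eqref{B1} persists to level $l+1$ by Corollary~\ref{corollary-NDP1}.

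The main obstacle is the passage from the a priori non-local condition ``$(P_0, J_{x_1}^{(\bar{\bar m}_{l+1})}f, \ldots, J_{x_{\bar k}}^{(\bar{\bar m}_{l+1})}f) \in H^{\textup{bdd}}(x_0, x_1, \ldots, x_{\bar k})$ for all $(x_1, \ldots, x_{\bar k}) \in E^{\bar k}$'' to a \emph{finite} list of pointwise linear differential operators. The key resource is the finite-dimensionality of the jet space $\mathcal{P}^{(m)}(\mathbb{R}^n,\mathbb{R}^D)$ combined with the semialgebraic dependence of $H^{\textup{bdd}}$ on its parameters: this forces the universally quantified condition to be equivalent to finitely many of its instances (indexed semialgebraically by $x$) after semialgebraic quantifier elimination, which, combined with the compactness of $E$ and careful bookkeeping of jet orders, yields pointwise operators $L_\nu$ whose vanishing on $E$ is equivalent to global properness. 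A secondary technical point is verifying growth bound~\eqref{PSQF8} for $\hat Q$ near collisions $x_k = x_{k'}$, which will require invoking the {\L}ojasiewicz-type estimate of Lemma~\ref{growthlemma} applied to the semialgebraic coefficient of the minimized form.
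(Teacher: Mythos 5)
Your setup is essentially the paper's: you correctly recast (GR1) and (GR2) as boundedness and limit conditions for the minimized quadratic form $\hat Q$, apply Propositions~\ref{SAQF1} and~\ref{SAQF2} to obtain the semialgebraic families $H^{\textup{bdd}}$ and $H^{\lim}$, and you even correctly identify the remaining difficulty: converting the condition ``$(P_0, J_{x_1}^{(\bar{\bar m})}f, \ldots, J_{x_{\bar k}}^{(\bar{\bar m})}f) \in H^{\textup{bdd}}(x_0,\ldots,x_{\bar k})$ for all $(x_1,\ldots,x_{\bar k})\in E^{\bar k}$'' into a finite list of \emph{pointwise} linear differential operators. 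But your claimed resolution of that difficulty does not work and omits the key idea of the paper's proof.

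Semialgebraic quantifier elimination cannot accomplish the decoupling you assert. The obstruction is not that the constraint is universally quantified over semialgebraic variables; it is that the constraint genuinely \emph{couples} the jets of $f$ at $\bar k$ distinct points $x_1,\ldots,x_{\bar k}$, and these jets are not free semialgebraic variables but values of an arbitrary $C^\infty$ function. A universally quantified linear condition on $(J_{x_1}f,\ldots,J_{x_{\bar k}}f)$ need not, in general, reduce to separate conditions on each $J_{x_i}f$ individually, and no amount of quantifier elimination or compactness of $E$ changes that. The paper's actual proof introduces a completely different device at exactly this point: the hypothesis \eqref{B1} ($\mathscr{H}_{\varphi f} = \varphi\odot\mathscr{H}_f$) together with Lemma~\ref{Lemma-NDP1} and Corollary~\ref{corollary-NDP1} show that if $P_0$ lies in the fiber of $\mathscr{H}_f^{(l)}$ at $x_0$, then $J_{x_0}^{(m)}\varphi\odot_{x_0}P_0$ lies in the fiber of $\mathscr{H}_{\varphi f}^{(l)}$ at $x_0$ for every $\varphi\in C_0^\infty(\mathbb{R}^n)$. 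One then chooses cutoffs $\varphi$ supported near one $x_i$ and vanishing near the others to peel the multi-point linear constraints in~\eqref{B11} apart into the single-point constraints \eqref{B13}--\eqref{B14}. This is what makes $H^{\textup{bdd}}$ collapse to the families $H^{GR,0}(x_0)$ and $H^{GR,1}(x_0,x)$, from which the differential operators $L_\nu^{new}$ are extracted by linear algebra. Your proposal acknowledges the obstacle but hand-waves past the step that is in fact the heart of the argument; you also mention \eqref{B1} and Corollary~\ref{corollary-NDP1} only in passing (``persists to level $l+1$''), without using them for the decoupling that they are actually there to provide.
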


\begin{proof}
We use induction on $l$. For $l=0$, we take $\bar{\bar{m}}=\bar{m}$, $%
\left\{ L_{1},\cdots ,L_{\nu _{\max }}\right\} =$ empty list $\left( \nu
_{\max }=0\right) $, $T^{\left( 0\right) }\left( x\right) =T\left( x\right)
. $ The conclusions of Lemma \ref{Main-Lemma-onHf} (for $l=0)$ are immediate
from our assumptions in Setup.

For the induction step, we fix $l\geq 1$ and assume that Lemma \ref%
{Main-Lemma-onHf} on $\mathscr{H}_{f}$ holds with $l$ replaced by $l-1$. We
then prove Lemma \ref{Main-Lemma-onHf} on $\mathscr{H}_{f}$ for the given $l 
$.

By our inductive assumption, there exist $\hat{m}\geq m;$ linear
differential operators 
\begin{equation*}
L_{\nu }^{old} \left( \nu =1,\cdots ,\nu _{\max }^{old}\right)
\end{equation*}
of order at most $\hat{m}$, with semialgebraic coefficients; and a linear
map $T^{old}\left( x\right) $ mapping $\mathcal{P}^{(\hat{m})}\left( \mathbb{%
R}^{n},\mathbb{R}^{M}\right) \rightarrow \mathcal{P}^{(m)}\left( \mathbb{R}%
^{n},\mathbb{R}^{D}\right) $ such that the following hold.

\begin{itemize}
\item[\refstepcounter{equation}\text{(\theequation)}\label{B2}] Let $f\in
C^{\infty }_0\left( \mathbb{R}^{n},\mathbb{R}^{M}\right) $. Then $\mathscr{H}%
_{f}^{\left( l-1\right) }$ is a proper bundle if and only if $L_{\nu
}^{old}f=0$ on $E$ for each $\nu =1,\cdots ,\nu^{old}_{\max}$.

\item[\refstepcounter{equation}\text{(\theequation)}\label{B3}] Let $f\in
C^{\infty }_0\left( \mathbb{R}^{n},\mathbb{R}^{M}\right) $. If $\mathscr{H}%
_{f}^{\left( l-1\right) }$ is a proper bundle, then $\mathscr{H}_{f}^{\left(
l-1\right) }=\left( T^{old}\left( x\right) J_{x}^{\left( \hat{m}\right)
}f+I^{\left( l-1\right) }\left( x\right) \right) _{x\in E}$.
\end{itemize}

Suppose $f\in C^{\infty }_0 \left( \mathbb{R}^{n},\mathbb{R}^{M}\right) $, $%
x_{0}\in E$, $P_{0}\in \mathcal{P}^{(m)}\left( \mathbb{R}^{n},\mathbb{R}%
^{D}\right) $ are given; and assume that $\mathscr{H}_{f}^{\left( l-1\right)
}$ is a proper bundle.

We investigate whether $P_{0}$ belongs to the fiber at $x_{0}$ of $%
\mathscr{H}_{f}^{\left( l\right) }=\mathscr{G}\left( \mathscr{H}_{f}^{\left(
l-1\right) }\right) $. We recall the definition (GR1) and (GR2) of the
(strong) Glaeser refinement in Section \ref{Notation-Definitions-Prelim}.

For $x_{1},\cdots ,x_{\bar{k}}\in E$, and for $P_{1},\cdots ,P_{\bar{k}}\in 
\mathcal{P}^{(\hat{m})}\left( \mathbb{R}^{n},\mathbb{R}^{M}\right) $, we
define $Q\left( x_{0},P_{0},x_{1},P_{1},\cdots ,x_{\bar{k}},P_{\bar{k}%
}\right) $ to be the minimum of the quantity 
\begin{equation*}
\sum_{|\alpha|\leq m}\sum_{i=1}^{D}\sum_{k,k^{\prime }\in \left\{ 0,\cdots ,%
\bar{k}\right\} ,x_{k}\not=x_{k^{\prime }}}\frac{\left\vert \partial
^{\alpha }\left( P_{k,i}^{\#}-P_{k^{\prime },i}^{\#}\right) \left(
x_{k}\right) \right\vert ^{2}}{\left\vert x_{k}-x_{k^{\prime }}\right\vert
^{2\left( m-\left\vert \alpha \right\vert \right) }}
\end{equation*}%
over all $P_{1}^{\#}=\left( P_{1,1}^{\#},\cdots ,P_{1,D}^{\#}\right) ,\cdots
,P_{\bar{k}}^{\#}=\left( P_{\bar{k},1}^{\#},\cdots ,P_{\bar{k}%
,D}^{\#}\right) $ such that 
\begin{equation*}
P_{k}^{\#}-T^{old}\left( x_{k}\right) P_{k}\in I^{\left( l-1\right) }\left(
x_{k}\right) \text{ for each }k \geq 1.
\end{equation*}%
Here, we take $P_0^{\#} = (P_{0,1}^{\#}, \cdots, P_{0,D}^{\#})$ to equal $%
P_0 $.

By linear algebra, $Q\left( x_{0},P_{0},x_{1},P_{1},\cdots ,x_{\bar{k}},P_{%
\bar{k}}\right) $ is a positive semidefinite quadratic form in $P_0,
P_{1},\cdots ,P_{\bar{k}}$, for each fixed $x_{0},x_{1},\cdots ,x_{\bar{k}%
}\in E$. Moreover, since $x\mapsto T^{old}\left( x\right) $ and $x \mapsto
I^{(l-1)}(x)$ are semialgebraic, it follows that 
\begin{equation*}
Q\left( x_{0},P_{0},\cdots ,x_{\bar{k}},P_{\bar{k}}\right)
\end{equation*}%
is a semialgebraic function of $\left( x_{0},P_{0},\cdots ,x_{\bar{k}},P_{%
\bar{k}}\right) $.

If we define $A\left( x_{0},\cdots ,x_{\bar{k}}\right) $ to be the norm of
this quadratic form, i.e., the least $A\left( x_{0},\cdots ,x_{\bar{k}%
}\right) $ such that 
\begin{equation*}
Q\left( x_{0},P_{0},x_{1},P_{1},\cdots ,x_{\bar{k}},P_{\bar{k}}\right) \leq
A\left( x_{0},\cdots ,x_{\bar{k}}\right) \cdot \left[ \sum_{|\alpha |\leq
m}|\partial ^{\alpha }P_{0}(x_{0})|^{2}+\sum_{i=1}^{\bar{k}}\sum_{\left\vert
\alpha \right\vert \leq \hat{m}}\left\vert \partial ^{\alpha }P_{i}\left(
x_{i}\right) \right\vert ^{2}\right] ,
\end{equation*}%
for all $P_{0},P_{1},\cdots ,P_{\bar{k}}$, then $A\left( x_{0},\cdots ,x_{%
\bar{k}}\right) $ is a nonnegative semialgebraic function of $\left(
x_{0},\cdots ,x_{\bar{k}}\right) $.

Therefore, we have all the conditions assumed in the setup in Section \ref%
{quadratic-form-set-up-section}. (The number of $z$'s there is zero.)

From Propositions \ref{SAQF1} and \ref{SAQF2}, we have the following
conclusions.

There exist $\bar{\bar{m}}\geq \hat{m};$ and families of vector subspaces 
\begin{equation*}
H^{\text{bdd}}\left( x_{0},\cdots ,x_{\bar{k}}\right) \mathcal{\subset 
\mathcal{P}}^{(m)}\left( \mathbb{R}^{n},\mathbb{R}^{D}\right) \oplus
\sum_{i=1}^{\bar{k}}\mathcal{P}^{(\bar{\bar{m}})}\left( \mathbb{R}^{n},%
\mathbb{R}^{M}\right)
\end{equation*}%
and 
\begin{equation*}
H^{\lim }\left( x_{0}\right) \mathcal{\subset \mathcal{P}}^{(m)}\left( 
\mathbb{R}^{n},\mathbb{R}^{D}\right) \oplus \mathcal{P}^{(\bar{\bar{m}}%
)}\left( \mathbb{R}^{n},\mathbb{R}^{M}\right) ,
\end{equation*}%
with the following properties.

\begin{itemize}
\item[\refstepcounter{equation}\text{(\theequation)}\label{B4}] $H^{\text{bdd%
}}\left( x_{0},\cdots ,x_{\bar{k}}\right) $ depends semi-algebraically on $%
\left( x_{0},x_{1},\cdots ,x_{\bar{k}}\right) \in E\times \cdots \times E$.

\item[\refstepcounter{equation}\text{(\theequation)}\label{B5}] $H^{\lim
}\left( x_{0}\right) $ depends semi-algebraically on $x_{0}\in E$.

\item[\refstepcounter{equation}\text{(\theequation)}\label{B6}] Let $%
x_{0}\in E$, $P_{0}\in \mathcal{P}^{\left( m\right) }\left( \mathbb{R}^{n},%
\mathbb{R}^{D}\right) $, $f\in C^{\infty }_0\left( \mathbb{R}^{n},\mathbb{R}%
^{M}\right) $. Then 
\begin{equation*}
\left\{ Q\left( x_{0},P_{0},x_{1},J_{x_{1}}^{\left( \hat{m}\right) }f,\cdots
,x_{\bar{k}},J_{x_{\bar{k}}}^{\left( \hat{m}\right) }f\right) :x_{1},\cdots
,x_{\bar{k}}\in E\right\}
\end{equation*}%
is bounded if and only if $\left( P_{0},J_{x_{1}}^{\left( \bar{\bar{m}}%
\right) }f,\cdots ,J_{x_{\bar{k}}}^{\left( \bar{\bar{m}}\right) }f\right)
\in H^{\text{bdd}}\left( x_{0},x_{1},\cdots ,x_{\bar{k}}\right) $ for all $%
x_{1},\cdots ,x_{\bar{k}}\in E$.

\item[\refstepcounter{equation}\text{(\theequation)}\label{B7}] Let $%
x_{0}\in E$, $\mathcal{P}_{0}\in \mathcal{P}^{\left( m\right) }\left( 
\mathbb{R}^{n},\mathbb{R}^{D}\right) $, $f\in C^{\infty }_0\left( \mathbb{R}%
^{n},\mathbb{R}^{M}\right) $. Suppose that 
\begin{equation*}
\left\{ Q\left( x_{0},P_{0},x_{1},J_{x_{1}}^{\left( \hat{m}\right) }f,\cdots
,x_{\bar{k}},J_{x_{\bar{k}}}^{\left( \hat{m}\right) }f\right) :x_{1},\cdots
,x_{\bar{k}}\in E\right\}
\end{equation*}%
is bounded. Then 
\begin{equation*}
\underset{x_{1},\cdots ,x_{\bar{k}}\in E}{\lim_{x_{1},\cdots ,x_{\bar{k}%
}\rightarrow x_{0}}}Q\left( x_{0},P_{0},x_{1},J_{x_{1}}^{\left( \hat{m}%
\right) }f,\cdots ,x_{\bar{k}},J_{x_{\bar{k}}}^{\left( \hat{m}\right)
}f\right) =0
\end{equation*}%
if and only if $\left( P_{0},J_{x_{0}}^{\left( \bar{\bar{m}}\right)
}f\right) \in H^{\lim }\left( x_{0}\right) $.
\end{itemize}

Comparing our definition of $Q(\cdots )$ with (GR1), and (GR2) in the
definition of the (strong) Glaeser refinement, we see the following.

Let $\mathscr{H}^{(l-1)}=\left( T^{old}\left( x\right) J_{x}^{\left( \hat{m}%
\right) }f+I^{\left( l-1\right) }\left( x\right) \right) _{x\in E}$, where $%
f\in C^{\infty }_0\left( \mathbb{R}^{n},\mathbb{R}^{M}\right) $ is given.
Let $x_{0}\in E$, and let $P_{0}$ belong to the fiber of $\mathscr{H}%
^{(l-1)} $ at $x_{0}$. Then (GR1) holds for $\left( x_{0},P_{0}\right) $ and 
$\mathscr{H}^{(l-1)}$, if and only if 
\begin{equation*}
\left\{ Q\left( x_{0},P_{0},x_{1},J_{x_{1}}^{\left( \hat{m}\right) }f,\cdots
,x_{\bar{k}},J_{x_{\bar{k}}}^{\left( \hat{m}\right) }f\right) :x_{1},\cdots
,x_{\bar{k}}\in E\right\}
\end{equation*}%
is bounded. Moreover, (GR2) holds for $\left( x_{0},P_{0}\right) $ and $%
\mathscr{H}^{(l-1)}$, if and only if 
\begin{equation*}
\underset{x_{1},\cdots ,x_{\bar{k}}\in E}{\lim_{x_{1},\cdots ,x_{\bar{k}%
}\rightarrow x_{0}}}Q\left( x_{0},P_{0},x_{1},J_{x_{1}}^{\left( \hat{m}%
\right) }f,\cdots ,x_{\bar{k}},J_{x_{\bar{k}}}^{\left( \hat{m}\right)
}f\right) =0.
\end{equation*}%
Recalling (\ref{B3}), (\ref{B6}), (\ref{B7}), we obtain the following result.

Let $f\in C_{0}^{\infty }\left( \mathbb{R}^{n},\mathbb{R}^{M}\right) $.
Suppose $\mathscr{H}_{f}^{\left( l-1\right) }$ is a proper bundle. Let $%
x_{0}\in E$, and let $P_{0}$ belong to the fiber of $\mathscr{H}_{f}^{\left(
l-1\right) }$ at $x_{0}$. Then $P_{0}$ belongs to the fiber of $\mathscr{H}%
_{f}^{\left( l\right) }=\mathscr{G}\left( \mathscr{H}_{f}^{\left( l-1\right)
}\right) $ at $x_{0}$, if and only if $\left( P_{0},J_{x_{0}}^{\left( \bar{%
\bar{m}}\right) }f\right) \in H^{\lim }\left( x_{0}\right) $ and $\left(
P_{0},J_{x_{1}}^{\left( \bar{\bar{m}}\right) }f,\cdots ,J_{x_{\bar{k}%
}}^{\left( \bar{\bar{m}}\right) }f\right) \in H^{\text{bdd}}\left(
x_{0},x_{1},\cdots ,x_{\bar{k}}\right) $ for all $x_{1},\cdots ,x_{\bar{k}%
}\in E$.

Together with (\ref{B2}) and (\ref{B3}), this immediately implies that there
exists a family of vector spaces

\begin{itemize}
\item[\refstepcounter{equation}\text{(\theequation)}\label{B8}] $%
H^{GR}\left( x_{0},x_{1},\cdots ,x_{\bar{k}}\right) \subset \mathcal{P}%
^{(m)}\left( \mathbb{R}^{n},\mathbb{R}^{D}\right) \oplus \sum_{i=0}^{\bar{k}}%
\mathcal{P}^{(\bar{\bar{m}})}\left( \mathbb{R}^{n},\mathbb{R}^{M}\right) $
depending semialgebraically on

$x_{0},x_{1},\cdots ,x_{\bar{k}}\in E, $
\end{itemize}

for which the following holds.

\begin{itemize}
\item[\refstepcounter{equation}\text{(\theequation)}\label{B9}] Let $f\in
C_{0}^{\infty }\left( \mathbb{R}^{n},\mathbb{R}^{M}\right) $, and suppose $%
\mathscr{H}_{f}^{\left( l-1\right) }$ is a proper bundle. Let $x_{0}\in E$, $%
P_{0}\in \mathcal{P}^{(m)}\left( \mathbb{R}^{n},\mathbb{R}^{D}\right) $.
Then $P_{0}$ belongs to the fiber of $\mathscr{H}_{f}^{\left( l\right) }$ at 
$x_{0}$ if and only if 
\begin{equation*}
\left( P_{0},J_{x_{0}}^{\left( \bar{\bar{m}}\right) }f,J_{x_{1}}^{\left( 
\bar{\bar{m}}\right) }f,\cdots ,J_{x_{\bar{k}}}^{\left( \bar{\bar{m}}\right)
}f\right) \in H^{GR}\left( x_{0},x_{1},\cdots ,x_{\bar{k}}\right)
\end{equation*}%
for all $x_{1},\cdots ,x_{\bar{k}}\in E$.
\end{itemize}

By linear algebra (and basic facts about semialgebraic sets), there exist
finitely many linear functionals 
\begin{equation*}
\left\{ 
\begin{array}{c}
\mu _{\nu }\left( x_{0},\cdots ,x_{\bar{k}}\right) :\mathcal{P}^{(m)}\left( 
\mathbb{R}^{n},\mathbb{R}^{D}\right) \rightarrow \mathbb{R} \\ 
\lambda _{i,\nu }\left( x_{0},\cdots ,x_{\bar{k}}\right) :\mathcal{P}^{(\bar{%
\bar{m}})}\left( \mathbb{R}^{n},\mathbb{R}^{M}\right) \rightarrow \mathbb{R}%
\text{ }(i=0,\cdots ,\bar{k})%
\end{array}%
\right\} _{\nu =1,\cdots ,\nu _{\max }^{\#}}\text{,}
\end{equation*}%
depending semialgebraically on $x_{0},x_{1},\cdots ,x_{\bar{k}}\in E$, such
that the following holds.

Let $x_{0},x_{1},\cdots ,x_{\bar{k}}\in E$. Let $P_{0}\in \mathcal{P}%
^{(m)}\left( \mathbb{R}^{n},\mathbb{R}^{D}\right) $ and let $%
P_{0}^{\#},\cdots ,P_{\bar{k}}^{\#}\in \mathcal{P}^{\left( \bar{\bar{m}}%
\right) }\left( \mathbb{R}^{n},\mathbb{R}^{M}\right) $. Then 
\begin{equation*}
\left( P_{0},P_{0}^{\#},\cdots ,P_{\bar{k}}^{\#}\right) \in H^{GR}\left(
x_{0},x_{1},\cdots ,x_{\bar{k}}\right)
\end{equation*}%
if and only if 
\begin{equation*}
\mu _{\nu }\left( x_{0},\cdots ,x_{\bar{k}}\right) \left[ P_{0}\right]
+\sum_{i=0}^{\bar{k}}\lambda _{i,\nu }\left( x_{0},\cdots ,x_{\bar{k}%
}\right) \left[ P_{i}^{\#}\right] =0\text{ for }\nu =1,\cdots ,\nu _{\max
}^{\#}.
\end{equation*}%
Hence, $\left( \ref{B9}\right) $ yields the following result.

\begin{itemize}
\item[\refstepcounter{equation}\text{(\theequation)}\label{B10}] Let $f\in
C_{0}^{\infty }\left( \mathbb{R}^{n},\mathbb{R}^{M}\right) $, and suppose $%
\mathscr{H}_{f}^{\left( l-1\right) }$ is a proper bundle. Let $x_{0}\in E$, $%
P_{0}\in \mathcal{P}^{(m)}\left( \mathbb{R}^{n},\mathbb{R}^{D}\right) $.
Then $P_{0}$ belongs to the fiber of $\mathscr{H}_{f}^{\left( l\right) }$ at 
$x_{0}$ if and only if 
\begin{equation*}
\mu _{\nu }\left( x_{0},\cdots ,x_{\bar{k}}\right) \left[ P_{0}\right]
+\sum_{i=0}^{\bar{k}}\lambda _{i,\nu }\left( x_{0},\cdots ,x_{\bar{k}%
}\right) \left[ J_{x_{i}}^{\left( \bar{\bar{m}}\right) }f\right] =0
\end{equation*}%
for $x_{1},\cdots ,x_{\bar{k}}\in E$ and $\nu =1,\cdots ,\nu _{\max }^{\#}$.
\end{itemize}

It may happen that $x_{i}=x_{j}$ in $\left( \ref{B10}\right) $ for some $%
i\not=j$. In that case, $\lambda _{i,\nu }$ and $\lambda _{j,\nu }$ play
redundant roles. We may easily rewrite $\left( \ref{B10}\right) $ as follows.

For each integer $k$ $\left( 0\leq k\leq \bar{k}\right) $, there are linear
functionals 
\begin{equation*}
\mu _{\nu }^{k}\left( x_{0},x_{1},\cdots ,x_{{k}}\right) :\mathcal{P}%
^{(m)}\left( \mathbb{R}^{n},\mathbb{R}^{D}\right) \rightarrow \mathbb{R}
\end{equation*}%
and 
\begin{equation*}
\lambda _{i,\nu }^{k}\left( x_{0},x_{1},\cdots ,x_{{k}}\right) :\mathcal{P}%
^{(\bar{\bar{m}})}\left( \mathbb{R}^{n},\mathbb{R}^{M}\right) \rightarrow 
\mathbb{R}\text{,}
\end{equation*}%
$i=0,\cdots ,k$, $\nu =1,\cdots ,\nu _{\max }^{\#}\left( k\right) $,
depending semialgebraically on $\left( x_{0},\cdots ,x_{{k}}\right) \in
E\times \cdots \times E$, for which we have the following.

\begin{itemize}
\item[\refstepcounter{equation}\text{(\theequation)}\label{B11}] Let $f\in
C_{0}^{\infty }\left( \mathbb{R}^{n},\mathbb{R}^{M}\right) $, and suppose $%
\mathscr{H}_{f}^{\left( l-1\right) }$ is a proper bundle. Let $x_{0}\in E$, $%
P_{0}\in \mathcal{P}^{(m)}\left( \mathbb{R}^{n},\mathbb{R}^{D}\right) $.
Then $P_{0}$ belongs to the fiber of $\mathscr{H}_{f}^{\left( l\right) }$ at 
$x_{0}$ if and only if, for each $0\leq k\leq \bar{k}$, for each $%
x_{1},\cdots ,x_{k}\in E\setminus \{x_{0}\}$ distinct, and for each $\nu
=1,\cdots ,\nu _{\max }^{\#}\left( {k}\right) $, we have 
\begin{equation*}
\mu _{\nu }^{k}\left( x_{0},\cdots ,x_{k}\right) \left[ P_{0}\right]
+\sum_{i=0}^{k}\lambda _{i,\nu }^{k}\left( x_{0},\cdots ,x_{k}\right) \left[
J_{x_{i}}^{\left( \bar{\bar{m}}\right) }f\right] =0.
\end{equation*}
\end{itemize}

Now let $f\in C_{0}^{\infty }\left( \mathbb{R}^{n},\mathbb{R}^{M}\right) $,
and suppose $\mathscr{H}_{f}^{\left( l-1\right) }$ is a proper bundle. Let $%
x_{0}\in E$, $P_{0}\in \mathcal{P}^{(m)}\left( \mathbb{R}^{n},\mathbb{R}%
^{D}\right) $, and suppose $P_{0}$ belongs to the fiber at $x_{0}$ of $%
\mathscr{H}_{f}^{\left( l\right) }$.

Let $\varphi $ be a scalar-valued $C_{0}^{\infty }$ function on $\mathbb{R}%
^{n}$. Then $\mathscr{H}_{\varphi f}=\varphi \odot \mathscr{H}_{f}$ (see
assumption (\ref{B1})), hence by Corollary \ref{corollary-NDP1}, we see that 
$\mathscr{H}_{\varphi f}^{\left( l-1\right) }=\varphi \odot \mathscr{H}%
_{f}^{\left( l-1\right) }$. In particular, $\mathscr{H}_{\varphi f}^{\left(
l-1\right) }$ is a proper bundle, so $\left( \ref{B11}\right) $ applies,
with $\varphi f$ in place of $f$, and with $J_{x_{0}}^{\left( m\right)
}\varphi \odot _{x_{0}}P_{0}$ in place of $P_{0}$. Note that $%
J_{x_{0}}^{\left( m\right) }\varphi \odot _{x_{0}}P_{0}$ belongs to the
fiber at $x_{0}$ of $\mathscr{H}_{\varphi f}^{\left( l\right) }=\mathscr{G}%
\left( \mathscr{H}_{\varphi f}^{\left( l-1\right) }\right) =\mathscr{G}%
\left( \varphi \odot \mathscr{H}_{f}^{\left( l-1\right) }\right) $, thanks
to Lemma \ref{Lemma-NDP1}.

Therefore, from (\ref{B11}), we obtain the following result.

Let $f\in C_{0}^{\infty }\left( \mathbb{R}^{n},\mathbb{R}^{M}\right) $, and
suppose $\mathscr{H}_{f}^{\left( l-1\right) }$ is a proper bundle. Let $%
x_{0}\in E$, $P_{0}\in \mathcal{P}^{(m)}\left( \mathbb{R}^{n},\mathbb{R}%
^{D}\right) $.

Suppose $P_{0}$ belongs to the fiber at $x_{0}$ of $\mathscr{H}_{f}^{\left(
l\right) }$.

Then, for any $\varphi \in C_{0}^{\infty }\left( \mathbb{R}^{n}\right) $,
for each $0\leq k\leq \bar{k}$, for each $x_{1},\cdots ,x_{k}\in E\setminus
\{x_{0}\}$ distinct, and for each $\nu =1,\cdots ,\nu _{\max }^{\#}\left(
k\right) $, we have 
\begin{equation}
\mu _{\nu }^{k}\left( x_{0},\cdots ,x_{k}\right) \left[ J_{x_{0}}^{\left(
m\right) }\varphi \odot _{x_{0}}P_{0}\right] +\sum_{i=0}^{k}\lambda _{i,\nu
}^{k}\left( x_{0},\cdots ,x_{k}\right) \left[ J_{x_{i}}^{\left( \bar{\bar{m}}%
\right) }\varphi \odot _{x_{i}}J_{x_{i}}^{\left( \bar{\bar{m}}\right) }f%
\right] =0.  \label{B12}
\end{equation}

Let $0\leq k\leq \bar{k}$, $x_{1},\cdots ,x_{k}\in E$ distinct (also
distinct from $x_{0}$), and $\nu =1,\cdots ,\nu _{\max }^{\#}\left( k\right) 
$ be given. For each $i=0,\cdots ,k$, we may pick $\varphi \in C_{0}^{\infty
}\left( \mathbb{R}^{n}\right) $ so that $\varphi \equiv 1$ in a neighborhood
of $x_{i}$, but $\varphi \equiv 0$ in a neighborhood of each $x_{j}$ $\left(
j\not=i\right) $. (That's why we were careful to arrange that $x_{0},\cdots
,x_{k}$ are distinct.)

If $i=0$, then (\ref{B12}) tells us that 
\begin{equation}
\mu _{\nu }^{k}\left( x_{0},\cdots ,x_{k}\right) \left[ P_{0}\right]
+\lambda _{0,\nu }^{k}\left( x_{0},\cdots ,x_{k}\right) \left[
J_{x_{0}}^{\left( \bar{\bar{m}}\right) }f\right] =0.  \label{B13}
\end{equation}%
If $1\leq i\leq {k}$, then $\left( \ref{B12}\right) $ tells us that 
\begin{equation}
\lambda _{i,\nu }^{k}\left( x_{0},\cdots ,x_{k}\right) \left[
J_{x_{i}}^{\left( \bar{\bar{m}}\right) }f\right] =0.  \label{B14}
\end{equation}

Thus, we have learned the following:

Let $f\in C_{0}^{\infty }\left( \mathbb{R}^{n},\mathbb{R}^{M}\right) $, and
suppose $\mathscr{H}_{f}^{\left( l-1\right) }$ is a proper bundle. Let $%
x_{0}\in E$, $P_{0}\in \mathcal{P}^{(m)}\left( \mathbb{R}^{n},\mathbb{R}%
^{D}\right) $.

\begin{itemize}
\item[\refstepcounter{equation}\text{(\theequation)}\label{B15}] If $P_{0}$
belongs to the fiber of $\mathscr{H}_{f}^{\left( l\right) }$ at $x_{0}$,
then $\left( \ref{B13}\right) $ holds for $0\leq k\leq \bar{k}$, $%
x_{1},\cdots ,x_{{k}}\in E\setminus \left\{ x_{0}\right\} $ distinct, and $%
\nu =1,\cdots ,\nu _{\max }^{\#}\left( k\right) ;$ and also $\left( \ref{B14}%
\right) $ holds for $1\leq k\leq \bar{k}$, $x_{1},\cdots ,x_{{k}}\in
E\setminus \left\{ x_{0}\right\} $ distinct, $i=1,\cdots ,{k}$, and $\nu
=1,\cdots ,\nu _{\max }^{\#}\left( k\right) $.
\end{itemize}

The converse of \eqref{B15} is immediate from \eqref{B11}. Therefore, we
obtain the following result, since the $\mu _{\nu }^{k}\left( x_{0},\cdots
,x_{k}\right) $ and $\lambda _{i,\nu }^{k}\left( x_{0},\cdots ,x_{k}\right) $
are semialgebraic.

There exist families of vector subspaces 
\begin{equation*}
H^{GR,0}\left( x_{0}\right) \subset \mathcal{P}^{(m)}\left( \mathbb{R}^{n},%
\mathbb{R}^{D}\right) \oplus \mathcal{P}^{(\bar{\bar{m}})}\left( \mathbb{R}%
^{n},\mathbb{R}^{M}\right)
\end{equation*}%
and%
\begin{equation*}
H^{GR,1}\left( x_{0},x\right) \subset \mathcal{P}^{(\bar{\bar{m}})}\left( 
\mathbb{R}^{n},\mathbb{R}^{M}\right) ,
\end{equation*}%
with the following properties.

\begin{itemize}
\item[\refstepcounter{equation}\text{(\theequation)}\label{B16}] $%
H^{GR,0}\left( x_{0}\right) $ depends semialgebraically on $x_{0}$; $%
H^{GR,1}\left( x_{0},x\right) $ depends semialgebraically on $x_{0}$, $x$.

\item[\refstepcounter{equation}\text{(\theequation)}\label{B17}] Let $f\in
C_{0}^{\infty }\left( \mathbb{R}^{n},\mathbb{R}^{M}\right) $, and suppose $%
\mathscr{H}_{f}^{\left( l-1\right) }$ is a proper bundle. Let $x_{0}\in E$, $%
P_{0}\in \mathcal{P}^{(m)}\left( \mathbb{R}^{n},\mathbb{R}^{D}\right) $.
Then $P_{0}$ belongs to the fiber of $\mathscr{H}_{f}^{\left( l\right) }$ at 
$x_{0}$ if and only if 
\begin{equation*}
\left( P_{0},J_{x_{0}}^{\left( \bar{\bar{m}}\right) }f\right) \in
H^{GR,0}\left( x_{0}\right)
\end{equation*}%
and 
\begin{equation*}
J_{x}^{\left( \bar{\bar{m}}\right) }f\in H^{GR,1}\left( x_{0},x\right) \text{
for all }x\in E\setminus \left\{ x_{0}\right\} \text{.}
\end{equation*}
\end{itemize}

For each $x_{0}\in E$, there exist a vector subspace $H^{GR,00}\left(
x_{0}\right) \subset \mathcal{P}^{(\bar{\bar{m}})}(\mathbb{R}^{n},\mathbb{R}%
^{M})$ and a linear map 
\begin{equation*}
T^{new}\left( x_{0}\right) :\mathcal{P}^{(\bar{\bar{m}})}\left( \mathbb{R}%
^{n},\mathbb{R}^{M}\right) \rightarrow \mathcal{P}^{(m)}\left( \mathbb{R}%
^{n},\mathbb{R}^{D}\right) ,
\end{equation*}%
both depending semialgebraically on $x_{0}$, such that the following holds:

\begin{itemize}
\item[\refstepcounter{equation}\text{(\theequation)}\label{B18}] Let $%
x_{0}\in E$ and let $P^{\#}\in \mathcal{P}^{\left( \bar{\bar{m}}\right)
}\left( \mathbb{R}^{n},\mathbb{R}^{M}\right) $. Then there exists $P_{0}\in 
\mathcal{P}^{(m)}\left( \mathbb{R}^{n},\mathbb{R}^{D}\right) $ such that $%
\left( P_{0},P^{\#}\right) \in H^{GR,0}\left( x_{0}\right) $, if and only if 
$P^{\#}\in H^{GR,00}\left( x_{0}\right) $. Moreover, if such a $P_{0}$
exists, then $\left( T^{new}\left( x_{0}\right) P^{\#},P^{\#}\right) \in
H^{GR,0}\left( x_{0}\right) $.
\end{itemize}

Indeed, (\ref{B18}) follows from linear algebra and standard properties of
semialgebraic sets and functions.

Now let $f\in C_{0}^{\infty }\left( \mathbb{R}^{n},\mathbb{R}^{M}\right) $,
and suppose $\mathscr{H}_{f}^{\left( l-1\right) }$ is a proper bundle.
According to $\left( \ref{B17}\right) $, $\mathscr{H}_{f}^{\left( l\right) }$
is a proper bundle if and only if

\begin{itemize}
\item $J_{x}^{\left( \bar{\bar{m}}\right) }f\in H^{GR,1}\left(
x_{0},x\right) $ for all $x_{0},x\in E$ with $x_{0}\not=x$, and

\item For each $x_{0}\in E$ there exists $P_{0}\in \mathcal{P}^{(m)}\left( 
\mathbb{R}^{n},\mathbb{R}^{D}\right) $ such that $\left(
P_{0},J_{x_{0}}^{\left( \bar{\bar{m}}\right) }f\right) \in H^{GR,0}\left(
x_{0}\right) $.
\end{itemize}

Therefore, \eqref{B17} and \eqref{B18} tell us the following.

Let $f\in C_{0}^{\infty }\left( \mathbb{R}^{n},\mathbb{R}^{M}\right) $, and
suppose $\mathscr{H}_{f}^{\left( l-1\right) }$ is a proper bundle. Then $%
\mathscr{H}_{f}^{\left( l\right) }$ is a proper bundle if and only if

\begin{itemize}
\item $J_{x}^{\left( \bar{\bar{m}}\right) }f\in H^{GR,1}\left(
x_{0},x\right) $ for all $x_{0},x\in E$ with $x_{0}\not=x$, and

\item $J_{x_{0}}^{\left( \bar{\bar{m}}\right) }f\in H^{GR,00}\left(
x_{0}\right) $ for all $x_{0}\in E$.
\end{itemize}

Moreover, if $\mathscr{H}_{f}^{\left( l\right) }$ is a proper bundle, then,
for each $x_{0}\in E$, $T^{new}\left( x_{0}\right) J_{x_{0}}^{\left( \bar{%
\bar{m}}\right) }f$ belongs to the fiber of $\mathscr{H}_{f}^{\left(
l\right) }$ at $x_{0};$ consequently if $\mathscr{H}_{f}^{\left( l\right) }$
is a proper bundle, then $\mathscr{H}_{f}^{\left( l\right) }=\left(
T^{new}\left( x\right) J_{x}^{\left( \bar{\bar{m}}\right) }f+I^{\left(
l\right) }\left( x\right) \right) _{x\in E}.$ (Recall that $\left( I^{\left(
l\right) }\left( x\right) \right) _{x\in E}$ is the $l^{th}$ (strong)
Glaeser refinement of the homogeneous bundle $\left( I\left( x\right)
\right) _{x\in E}$ introduced in the setup of this section.)

The above remarks imply that there exists a family of vector subspaces $%
H^{new}\left( x\right) \mathcal{\subset \mathcal{P}}^{(\bar{\bar{m}})}\left( 
\mathbb{R}^{n},\mathbb{R}^{M}\right) $, depending semialgebraically on $x\in
E$, such that the following holds.

\begin{itemize}
\item[\refstepcounter{equation}\text{(\theequation)}\label{B19}] Let $f\in
C_{0}^{\infty }\left( \mathbb{R}^{n},\mathbb{R}^{M}\right) $, and suppose $%
\mathscr{H}_{f}^{\left( l-1\right) }$ is a proper bundle. Then $\mathscr{H}%
_{f}^{\left( l\right) }$ is a proper bundle if and only if $J_{x}^{\left( 
\bar{\bar{m}}\right) }f\in H^{new}\left( x\right) $ for all $x\in E$.
Moreover, if $\mathscr{H}_{f}^{\left( l\right) }$ is a proper bundle, then $%
\mathscr{H}_{f}^{\left( l\right) }=\left( T^{new}\left( x\right)
J_{x}^{\left( \bar{\bar{m}}\right) }f+I^{\left( l\right) }\left( x\right)
\right) _{x\in E}$.
\end{itemize}

We can now produce linear functionals $\lambda _{\nu }^{new}\left( x\right) :%
\mathcal{P}^{(\bar{\bar{m}})}\left( \mathbb{R}^{n},\mathbb{R}^{M}\right)
\rightarrow \mathbb{R}$, $\nu =1,\cdots ,\nu _{\max }^{new}$, depending
semialgebraically on $x\in E$ such that 
\begin{equation}
H^{new}\left( x\right) =\left\{ P^{\#}\in \mathcal{P}^{(\bar{\bar{m}}%
)}\left( \mathbb{R}^{n},\mathbb{R}^{M}\right) :\lambda _{\nu }^{new}\left(
x\right) P^{\#}=0\text{ for all }\nu =1,\cdots ,\nu _{\max }^{new}\right\} .
\label{B20}
\end{equation}%
Setting $L_{\nu }^{new}f\left( x\right) :=\lambda _{\nu }^{new}\left(
x\right) \left[ J_{x}^{\left( \bar{\bar{m}}\right) }f\right] $, we obtain
the following from (\ref{B19}), (\ref{B20}):

\begin{itemize}
\item Each $L_{\nu }^{new}$ is a linear differential operator, with
semialgebraic coefficients, carrying functions in $C^{\infty }\left( \mathbb{%
R}^{n},\mathbb{R}^{M}\right) $ to scalar-valued functions.

\item Let $f\in C_{0}^{\infty }\left( \mathbb{R}^{n},\mathbb{R}^{M}\right) $%
. Then $\mathscr{H}_{f}^{\left( l\right) }$ is a proper bundle if and only
if $\mathscr{H}_{f}^{\left( l-1\right) }$ is a proper bundle and $L_{\nu
}^{new}f=0$ for $\nu =1,\cdots ,\nu _{\max }^{new}$.

\item Let $f\in C_{0}^{\infty }\left( \mathbb{R}^{n},\mathbb{R}^{M}\right) $%
. Suppose $\mathscr{H}_{f}^{\left( l\right) }$ is a proper bundle. Then 
\begin{equation*}
\mathscr{H}_{f}^{\left( l\right) }=\left( T^{new}\left( x\right)
J_{x}^{\left( \bar{\bar{m}}\right) }f+I^{\left( l\right) }\left( x\right)
\right) _{x\in E}.
\end{equation*}
\end{itemize}

Recall that $T^{new}\left( x\right) $ depends semialgebraically on $x\in E$.
The above bullet points and (\ref{B2}) together imply the conclusions of
Lemma \ref{Main-Lemma-onHf}.

This completes our induction on $l$, proving Lemma \ref{Main-Lemma-onHf}.
\end{proof}

\section{Proof of Theorem \protect\ref{statement-main-theore}}\label{proofofmain}

\begin{proof} In proving Theorem \ref{statement-main-theore}, we may suppose that $f$ has
compact support, simply because $E$ is compact.

Thus, let $E\subset \mathbb{R}^{n}$ be a compact semialgebraic set, and let $%
M,N\geq 1$. Suppose we are given a matrix $\left( A_{ij}\right) _{1\leq
i\leq N,1\leq j\leq M}$ of semialgebraic functions on $E$. Suppose further
we are given $f_{i}\in C_{0}^{\infty }(\mathbb{R}^{n})$, $i=1,\cdots ,N$.
\bigskip

We want to solve the equations 
\begin{equation}
\sum_{j=1}^{M}A_{ij}\left( x\right) F_{j}\left( x\right) =f_{i}\left(
x\right) \text{ on }E\text{ }\left( i=1,\cdots ,N\right)  \label{S1}
\end{equation}%
for unknown functions $F_{1},\cdots ,F_{M}\in C^{m}\left( \mathbb{R}%
^{n}\right) $.

We write $f=\left( f_{1},\cdots ,f_{N}\right) \in C_{0}^{\infty }\left( 
\mathbb{R}^{n},\mathbb{R}^{N}\right) $.

We reinterpret (\ref{S1}) in terms of bundles. For $x\in E$, let $I\left(
x\right) \subset \mathcal{P}^{(m)}\left( \mathbb{R}^{n},\mathbb{R}%
^{M}\right) $ be the set of all $P=\left( P_{1},\cdots ,P_{M}\right) \in 
\mathcal{P}^{(m)}\left( \mathbb{R}^{n},\mathbb{R}^{M}\right) $ such that 
\begin{equation}
\sum_{j=1}^{M}A_{ij}\left( x\right) P_{j}\left( x\right) =0\text{ for }%
i=1,\cdots ,N.  \label{S2}
\end{equation}%
Note that (\ref{S2}) involves only the values of the $P_{j}$ at $x$, and
thus discards the higher-order information encoded in $P_{j}$.

Note also that $I(x)$ depends semialgebraically on $x$. 

Observe that $I\left( x\right) $ is an $\mathscr{R}_{x}^{m}$-submodule of $%
\mathcal{P}^{(m)}\left( \mathbb{R}^{n},\mathbb{R}^{M}\right) $. Indeed, if $%
P=\left( P_{1},\cdots ,P_{M}\right) \in I\left( x\right) $, and if $Q\in %
\mathscr{R}_{x}^{m}$, then 
\begin{equation*}
Q\odot _{x}P=\left( Q\odot _{x}P_{1},\cdots ,Q\odot _{x}P_{M}\right) ,
\end{equation*}%
and 
\begin{equation*}
\left( Q\odot _{x}P_{j}\right) \left( x\right) =Q\left( x\right) P_{j}\left(
x\right) \text{.}
\end{equation*}%
Therefore, condition (\ref{S2}) for $P$ implies condition (\ref{S2}) for $%
Q\odot _{x}P$.

\begin{itemize}
\item[\refstepcounter{equation}\text{(\theequation)}\label{S3}] For each $%
x\in E$, let $\Pi \left( x\right) $ denote the orthogonal projection from $%
\mathbb{R}^{N}$ onto the range of the matrix $\left( A_{ij}\left( x\right)
\right) $.

\item[\refstepcounter{equation}\text{(\theequation)}\label{S4}] For $\left(
\xi _{1},\cdots ,\xi _{N}\right) \in \mathbb{R}^{N}$, let $T^{o}\left(
x\right) \left[ \xi _{1},\cdots ,\xi _{N}\right] $ denote the vector $\left(
\eta _{1},\cdots ,\eta _{M}\right) \in \mathbb{R}^{M}$ that solves the
equation 
\begin{equation*}
\left( A_{ij}\left( x\right) \right) \left( 
\begin{array}{c}
\eta _{1} \\ 
\vdots \\ 
\eta _{M}%
\end{array}%
\right) =\Pi \left( x\right) \left( 
\begin{array}{c}
\xi _{1} \\ 
\vdots \\ 
\xi _{N}%
\end{array}%
\right) \text{,}
\end{equation*}%
with the (Euclidean) norm of $\left( \eta _{1},\cdots ,\eta _{M}\right) $ as
small as possible.
\end{itemize}

Thus, $\Pi \left( x\right) $ and $T^{o}\left( x\right) $ are matrices that
depend semialgebraically on $x\in E$.

\begin{itemize}
\item[\refstepcounter{equation}\text{(\theequation)}\label{S5}] For any $%
P=\left( P_{1},\cdots ,P_{N}\right) \in \mathcal{P}^{(m)}\left( \mathbb{R}%
^{n},\mathbb{R}^{N}\right) $, and any $x\in E$, let $T\left( x\right) \left[
P\right] $ be the vector of constant polynomials given by $T^{o}\left(
x\right) \left[ P_{1}\left( x\right) ,\cdots ,P_{N}\left( x\right) \right] $.
\end{itemize}

We will prove the following two facts.

\begin{lemma}
\label{Lemma-S1}Let $f=\left( f_{1},\cdots ,f_{N}\right) \in C_{0}^{\infty
}\left( \mathbb{R}^{n},\mathbb{R}^{N}\right) $ be given. A function $%
F=\left( F_{1},\cdots ,F_{M}\right) \in C^{m}\left( \mathbb{R}^{n},\mathbb{R}%
^{M}\right) $ solves equations $\left( \ref{S1}\right) $ if and only if it
is a section of the bundle 
\begin{equation}
\mathscr{H}_{f}=\left( T\left( x\right) J_{x}^{\left( m\right) }f+I\left(
x\right) \right) _{x\in E}  \label{S6}
\end{equation}%
and $\left( I-\Pi \left( x\right) \right) f\left( x\right) =0$ for all $x\in
E$.
\end{lemma}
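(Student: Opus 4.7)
The plan is to unwind the definitions and show that, at each $x \in E$, the equation $\sum_j A_{ij}(x)F_j(x) = f_i(x)$ is equivalent to the pair of conditions $(I - \Pi(x))f(x) = 0$ and $J_x^{(m)}F \in T(x)J_x^{(m)}f + I(x)$. The key observation is that both the fiber $I(x)$ of the homogeneous bundle and the translate $T(x)J_x^{(m)}f$ involve only the \emph{values} at $x$ of the polynomials in question, so membership in $H_x$ really only constrains the constant term $F(x)$ and not the higher-order jet of $F$.

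For the forward direction, suppose $F \in C^m(\mathbb{R}^n,\mathbb{R}^M)$ solves (\ref{S1}) on $E$. Then for each $x \in E$, the vector $f(x)$ lies in the range of the matrix $(A_{ij}(x))$, hence $\Pi(x)f(x) = f(x)$, giving $(I-\Pi(x))f(x)=0$. Setting $\eta^*(x) := T^o(x)[f_1(x),\ldots,f_N(x)]$, the definition of $T^o(x)$ gives $A(x)\eta^*(x) = \Pi(x)f(x) = f(x)$, so $A(x)(F(x) - \eta^*(x)) = 0$. Since $T(x)J_x^{(m)}f$ is the vector of constant polynomials equal to $\eta^*(x)$, the polynomial $J_x^{(m)}F - T(x)J_x^{(m)}f$ has value $F(x) - \eta^*(x) \in \ker A(x)$ at $x$, which is precisely the defining condition (\ref{S2}) for membership in $I(x)$. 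Thus $J_x^{(m)}F \in T(x)J_x^{(m)}f + I(x)$.

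For the converse, suppose $(I-\Pi(x))f(x) = 0$ for all $x \in E$ and $F$ is a section of $\mathscr{H}_f$. Fix $x \in E$ and write $J_x^{(m)}F = T(x)J_x^{(m)}f + P$ with $P \in I(x)$. Evaluating at $x$ yields $F(x) = \eta^*(x) + P(x)$, and applying the matrix $(A_{ij}(x))$ gives $A(x)F(x) = A(x)\eta^*(x) + A(x)P(x) = \Pi(x)f(x) + 0 = f(x)$, the last equality using the assumption on $f$. Hence $F$ solves (\ref{S1}) at $x$, and since $x \in E$ was arbitrary, $F$ solves (\ref{S1}) on $E$.

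There is no real obstacle here beyond carefully noting that $I(x)$ is insensitive to higher-order jet data (so that only the constant terms need to match up), and that the minimum-norm convention in the definition of $T^o(x)$ plays no role in the equivalence — any right inverse of $A(x)$ on $\mathrm{range}(A(x))$ would do. The minimum-norm choice is merely to make $T(x)$ a semialgebraically-varying linear map, which is what will matter in later sections but not for this lemma.
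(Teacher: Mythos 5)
Your proof is correct and follows essentially the same route as the paper: reduce to the observation that $A(x)\{T(x)J_x^{(m)}f\}(x) = \Pi(x)f(x)$, note that membership in $I(x)$ constrains only the value of the polynomial at $x$, and then the equivalence is immediate. The paper handles the forward implication for $(I-\Pi(x))f(x)=0$ first and then argues a single biconditional; you split into the two directions, but the substance is identical. Your closing remark about the minimum-norm convention being inessential for this lemma is accurate and a nice aside.
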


\begin{lemma}
\label{Lemma-S2}For $f=\left( f_{1},\cdots ,f_{N}\right) \in C_{0}^{\infty
}\left( \mathbb{R}^{n},\mathbb{R}^{N}\right) $, let $\mathscr{H}_{f}$ be
defined by $\left( \ref{S6}\right) $. Then for $\varphi \in C_{0}^{\infty
}\left( \mathbb{R}^{n}\right) $ and $f=\left( f_{1},\cdots ,f_{N}\right) \in
C_{0}^{\infty }\left( \mathbb{R}^{n},\mathbb{R}^{N}\right) $ we have 
\begin{equation*}
\mathscr{H}_{\varphi f}=\varphi \odot \mathscr{H}_{f}\text{.}
\end{equation*}
\end{lemma}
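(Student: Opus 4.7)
The plan is to show that the two bundles have literally the same fiber at each $x \in E$, by unpacking both definitions and reducing the claim to a pointwise identity. Recall that the fiber of $\mathscr{H}_{\varphi f}$ at $x$ is $T(x) J_x^{(m)}(\varphi f) + I(x)$, while the fiber of $\varphi \odot \mathscr{H}_f$ at $x$ is $J_x^{(m)}\varphi \odot_x T(x) J_x^{(m)}f + I(x)$; since $I(x)$ is unchanged on the two sides, the whole task is to show that
\[
T(x) J_x^{(m)}(\varphi f) - J_x^{(m)}\varphi \odot_x T(x) J_x^{(m)}f \in I(x).
\]

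The first key observation is that the map $T(x)$ defined in \eqref{S5} depends only on the value of its argument at $x$: by definition $T(x)[P] = T^o(x)[P_1(x),\cdots,P_N(x)]$, which is a vector of constant polynomials. Hence $T(x) J_x^{(m)}(\varphi f) = T^o(x)[\varphi(x) f(x)] = \varphi(x) \cdot T^o(x)[f(x)]$, i.e. the constant polynomial vector $\varphi(x) c$ where $c := T^o(x)[f(x)] = T(x) J_x^{(m)} f \in \mathbb{R}^M$ (viewed as a constant $m$-jet). The second key observation is that for a constant vector $c$ we have $J_x^{(m)}\varphi \odot_x c = c \cdot J_x^{(m)}\varphi$ (componentwise scalar product of the jet $J_x^{(m)}\varphi$ by the constant $c_k$), since the $\odot_x$ product with a constant reduces to ordinary scalar multiplication of jets.

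Combining these two facts, the difference of offsets becomes
\[
\varphi(x)\, c \;-\; c\cdot J_x^{(m)}\varphi \;=\; c \cdot \bigl(\varphi(x) - J_x^{(m)}\varphi\bigr),
\]
which is a polynomial vector whose value at $x$ is $c\cdot (\varphi(x) - \varphi(x)) = 0$. To finish, I will invoke the defining property of $I(x)$: any $P = (P_1,\cdots,P_M) \in \mathcal{P}^{(m)}(\mathbb{R}^n,\mathbb{R}^M)$ with $P_j(x) = 0$ for all $j$ automatically satisfies $\sum_j A_{ij}(x) P_j(x) = 0$ and therefore lies in $I(x)$. Applying this to the difference above gives exactly the required membership, so the two fibers coincide and $\mathscr{H}_{\varphi f} = \varphi \odot \mathscr{H}_f$.

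This argument is essentially a one-step verification rather than a deep induction, and I do not anticipate a genuine obstacle. The only subtlety to be careful about is the bookkeeping that $\varphi \odot \mathscr{H}_f$ is well-defined independent of the representative chosen for the offset $T(x) J_x^{(m)} f$ modulo $I(x)$, which is already noted in Section~\ref{Notation-Definitions-Prelim}; once that is kept in mind, the computation above goes through directly.
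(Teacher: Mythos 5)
Your proof is correct and follows essentially the same route as the paper's: both rely on the observation that $T(x)$ only depends on the value of its argument at $x$, then use linearity of $T^o(x)$ to pull out the scalar $\varphi(x)$, and finally conclude membership in $I(x)$ because the condition defining $I(x)$ only examines values at the single point $x$. The paper phrases it slightly more compactly by showing the bracketed expression in its equation (S7) vanishes outright, whereas you first write the full jet-level difference $c\cdot(\varphi(x) - J_x^{(m)}\varphi)$ and then evaluate at $x$, but this is the same computation.
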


\begin{proof}[Proof of Lemma \protect\ref{Lemma-S1}]
If $\left( I-\Pi \left( x\right) \right) f\left( x\right) \not=0$ for some $%
x\in E$, then $\left( f_{1}\left( x\right) ,\cdots ,f_{N}\left( x\right)
\right) $ doesn't belong to the range of $\left( A_{ij}\left( x\right)
\right) $, so obviously equations (\ref{S1}) have no solution. Hence, we may
assume that 
\begin{equation*}
\left( I-\Pi \left( x\right) \right) f\left( x\right) =0
\end{equation*}
for all $x\in E$.

We then have (by (\ref{S4})): 
\begin{equation*}
\left( A_{ij}\left( x\right) \right) T^{o}\left( x\right) \left( 
\begin{array}{c}
f_{1}\left( x\right) \\ 
\vdots \\ 
f_{N}\left( x\right)%
\end{array}%
\right) =\left( 
\begin{array}{c}
f_{1}\left( x\right) \\ 
\vdots \\ 
f_{N}\left( x\right)%
\end{array}%
\right)
\end{equation*}%
for each $x\in E$, hence 
\begin{equation*}
\left( A_{ij}\left( x\right) \right)\left\{ \left[ T\left( x\right) \left( 
\begin{array}{c}
J_{x}^{\left( m\right) }f_{1} \\ 
\vdots \\ 
J_{x}^{\left( m\right) }f_{N}%
\end{array}%
\right) \right] \left( x\right) \right\} =\left( 
\begin{array}{c}
f_{1}\left( x\right) \\ 
\vdots \\ 
f_{N}\left( x\right)%
\end{array}%
\right) \text{.}
\end{equation*}%
Therefore, for $F=\left( F_{1},\cdots ,F_{M}\right) \in C^{m}\left( \mathbb{R%
}^{n},\mathbb{R}^{M}\right) $, we have 
\begin{equation*}
\left( A_{ij}\left( x\right) \right) \left( F\left( x\right) \right) =\left( 
\begin{array}{c}
f_{1}\left( x\right) \\ 
\vdots \\ 
f_{N}\left( x\right)%
\end{array}%
\right)
\end{equation*}%
if and only if 
\begin{equation*}
\left( A_{ij}\left( x\right) \right) \left[ F\left( x\right) -\left\{T\left(
x\right) \left( 
\begin{array}{c}
J^{(m)}_xf_{1} \\ 
\vdots \\ 
J^{(m)}_xf_{N}%
\end{array}%
\right)\right\}(x)\right] =0\text{, i.e.,}
\end{equation*}%
\begin{equation*}
J_{x}^{\left( m\right) }F-T\left( x\right) J_{x}^{\left( m\right) }f\in
I\left( x\right)
\end{equation*}%
(see the definition (\ref{S2}) of $I\left( x\right) $).

That is, whenever $\left( I-\Pi \left( x\right) \right) f\left( x\right) =0$
for all $x\in E$, we find that $F$ solves $\left( \ref{S1}\right) $ if and
only if 
\begin{equation*}
J_{x}^{\left( m\right) }F\in T\left( x\right) J_{x}^{\left( m\right)
}F+I\left( x\right) \text{ for all }x\in E\text{, i.e.,}
\end{equation*}%
if and only if $F$ is a section of $\mathscr{H}_{f}$; see $\left( \ref{S6}%
\right) $. The proof of Lemma \ref{Lemma-S1} is complete.
\end{proof}

\begin{proof}[Proof of Lemma \protect\ref{Lemma-S2}]
We have 
\begin{equation*}
\mathscr{H}_{f}=\left( T\left( x\right) J_{x}^{\left( m\right) }f+I\left(
x\right) \right) _{x\in E},
\end{equation*}%
so 
\begin{equation*}
\varphi \odot \mathscr{H}_{f}=\left( J_{x}^{\left( m\right) }\varphi \odot
_{x}\left\{ T\left( x\right) J_{x}^{\left( m\right) }f\right\} +I\left(
x\right) \right) _{x\in E}
\end{equation*}%
by definition of $\odot $. Also, by definition, 
\begin{equation*}
\mathscr{H}_{\varphi f}=\left( T\left( x\right) J_{x}^{\left( m\right)
}\left( \varphi F\right) +I\left( x\right) \right) _{x\in E}\text{.}
\end{equation*}

To establish Lemma \ref{Lemma-S2}, we must therefore show that 
\begin{equation*}
J_{x}^{\left( m\right) }\varphi \odot _{x}\left\{ T\left( x\right)
J_{x}^{\left( m\right) }f\right\} -T\left( x\right) J_{x}^{\left( m\right)
}\left( \varphi f\right) \in I\left( x\right) \text{ for all }x\in E\text{.}
\end{equation*}%
By the definition of $I\left( x\right) $, this means that 
\begin{equation}
\sum_{j=1}^{M}A_{ij}\left( x\right) \left[ \varphi \left( x\right) \left\{
T\left( x\right) J_{x}^{\left( m\right) }f\right\} \left( x\right) -\left(
T\left( x\right) J_{x}^{\left( m\right) }\left( \varphi f\right) \right)
\left( x\right) \right] _{j}=0\text{.}  \label{S7}
\end{equation}%
We will check the stronger result that the expression in square
brackets is zero.

In fact, $T\left( x\right) J_{x}^{\left( m\right) }f$ is the vector of
constant polynomials $T^{o}\left( x\right) \left( 
\begin{array}{c}
f_{1}\left( x\right) \\ 
\vdots \\ 
f_{N}\left( x\right)%
\end{array}%
\right) $ by definition $\left( \ref{S5}\right) $.

Hence, $\varphi \left( x\right) \left\{ T\left( x\right) J_{x}^{\left(
m\right) }f\right\} \left( x\right) =\varphi \left( x\right) \cdot
T^{o}\left( x\right) \left( 
\begin{array}{c}
f_{1}\left( x\right) \\ 
\vdots \\ 
f_{N}\left( x\right)%
\end{array}%
\right) $.

Another application of definition $\left( \ref{S5}\right) $ yields 
\begin{equation*}
\left(T\left( x\right) J_{x}^{\left( m\right) }\left( \varphi f\right)
\right) \left( x\right) =T^{o}\left( x\right) \left( 
\begin{array}{c}
\varphi \left( x\right) f_{1}\left( x\right) \\ 
\vdots \\ 
\varphi \left( x\right) f_{N}\left( x\right)%
\end{array}%
\right) \text{.}
\end{equation*}%
Consequently, the expression in square brackets in $\left( \ref{S7}\right) $
is equal to 
\begin{equation*}
\varphi \left( x\right) T^{o}\left( x\right) \left( 
\begin{array}{c}
f_{1}\left( x\right) \\ 
\vdots \\ 
f_{N}\left( x\right)%
\end{array}%
\right) -T^{o}\left( x\right) \left( 
\begin{array}{c}
\varphi \left( x\right) f_{1}\left( x\right) \\ 
\vdots \\ 
\varphi \left( x\right) f_{N}\left( x\right)%
\end{array}%
\right) \text{,}
\end{equation*}%
which equals zero, as promised. The proof of Lemma \ref{Lemma-S2} is
complete.
\end{proof}

Thanks to Lemma \ref{Lemma-S2}, our bundle $\mathscr{H}_{f}$ in $\left( \ref%
{S6}\right) $ satisfies the assumptions made in Section \ref{BDSF}.

Hence, we may apply Lemma \ref{Main-Lemma-onHf}.

We take $l$ in Lemma \ref{Main-Lemma-onHf} to equal the large constant $%
l_{\ast }$ from Theorem \ref{CK-Theorem}.

We can then argue as follows.

Let $f=\left( f_{1},\cdots ,f_{N}\right) \in C_{0}^{\infty }\left( \mathbb{R}%
^{n},\mathbb{R}^{N}\right) $ be given. Then, by Lemma \ref{Lemma-S1}, the
equations (\ref{S1}) admit a $C^{m}$ solution $\left( F_{1},\cdots
,F_{M}\right) $ if and only if 
\begin{equation*}
\left( I-\Pi \left( x\right) \right) f\left( x\right) =0
\end{equation*}%
for all $x\in E$ and $\mathscr{H}_{f}$ has a section.

By Theorem \ref{CK-Theorem}, this holds if and only if $\left( I-\Pi \left(
x\right) \right) f\left( x\right) =0$ for all $x\in E$ and $\mathscr{G}%
^{\left( l_{\ast }\right) }\mathscr{H}_{f}$ is a proper bundle.

By Lemma \ref{Main-Lemma-onHf}, this in turn holds if and only if $\left(
I-\Pi \left( x\right) \right) f\left( x\right) =0$ for all $x\in E$ and $%
L_{\nu }f=0$ on $E$ for $\nu =1,\cdots ,\nu _{\max }$, where each $L_{\nu }$
is a linear partial differential operator with semialgebraic coefficients,
mapping functions in $C^{\infty }\left( \mathbb{R}^{n},\mathbb{R}^{N}\right) 
$ to scalar-valued functions on $\mathbb{R}^{n}$.

Since the equation $\left( I-\Pi \left( x\right) \right) f\left( x\right) =0$
on $E$ is also a system of such linear partial differential equations (of
order 0), the proof of Theorem \ref{statement-main-theore} is complete. \end{proof}

\section{Proof of Theorem \protect\ref{theorem1}}

\label{passtononcompact}

\begin{proof} Let $U$ be a bounded open semialgebraic subset of $\mathbb{R}^{n}$ and let $%
\left( A_{ij}(x)\right) _{1\leq i\leq N,1\leq j\leq M}$ be a matrix of
semialgebraic functions on $\mathbb{R}^{n}$.

According to Theorem \ref{statement-main-theore}, there exist linear partial
differential operators $L_{\nu }$ $(\nu =1,\cdots ,\nu _{\max })$ with
semialgebraic coefficients, such that given $f\in C^{\infty }(\mathbb{R}^{n},%
\mathbb{R}^{N})$ there exists $F\in C^{m}(\mathbb{R}^{n},\mathbb{R}^{M})$
such that $\sum_{j=1}^{M}A_{ij}(x)F_{j}(x)=f_{i}(x)$ $(i=1,\cdots ,N)$, all $%
x\in U^{\text{closure}}$ if and only if $L_{\nu }f=0$ on $U^{\text{closure}}$%
, all $\nu $.

Let $\bar{m}$ be greater than or equal to the order of each $L_{\nu }$.

Now, suppose $F\in C_{loc}^{m}\left(U,\mathbb{R}^{M}\right) $, 
$f\in C^{\infty }\left( U,\mathbb{R}^{N}\right) $, and $\sum_{j=1}^MA_{ij}%
\left( x\right) F_{j}\left( x\right) =f_{i}\left( x\right) $ $\left(
i=1,\cdots ,N\right) $ on $U$.

Fix $x_{0}\in U$, and let $\theta \in C_{0}^{\infty }\left( U\right) $.

Then $\theta F\in C^{m}\left( \mathbb{R}^{n},\mathbb{R}^{M}\right) $, $%
\theta f\in C^{\infty }\left( \mathbb{R}^{n},\mathbb{R}^{N}\right) $, and $%
\sum_{j=1}^MA_{ij}\left( x\right) \left( \theta F_{j}\right) \left( x\right)
=\left( \theta f_{i}\right) \left( x\right) $ $\left( i=1,\cdots ,N\right) $
on $U^{\text{closure}}$.

Consequently, $L_{\nu }\left( \theta f\right) (x_0)=0$ for all $\nu $.

Given any $P\in \mathcal{P}^{\left( \bar{m}\right) }$ there exists $\theta
\in C_{0}^{\infty }\left( U\right) $ such that $J_{x_{0}}^{\left( \bar{m}%
\right) }\theta =P$, hence $L_{\nu }\left( \theta f\right) \left(
x_{0}\right) =L_{\nu }\left( Pf\right) \left( x_{0}\right) $.

Thus, $L_{\nu }\left( Pf\right) =0$ on $U$, all $P\in \mathcal{P}^{\left( 
\bar{m}\right) }$, i.e.,

\begin{itemize}
\item[\refstepcounter{equation}\text{(\theequation)}\label{++1}] {$L_{\nu
}\left( x^{\gamma }f\right) =0$ on $U$ for all $\left\vert \gamma
\right\vert \leq \bar{m}$ and each $\nu $.}
\end{itemize}

Thus, if $f\in C^{\infty }\left( U,\mathbb{R}^{N}\right) $ admits a solution 
$F\in C_{loc}^{m}\left( U,\mathbb{R}^{M}\right) $ of $%
\sum_{j=1}^{M}A_{ij}F_{j}=f_{i}$ ($i=1,\cdots ,N$) on $U$, then (\ref{++1})
holds.

Conversely, suppose (\ref{++1}) holds on $U$. Then $L_{\nu }\left( Pf\right)
\left( x_{0}\right) =0$ for any $x_{0}\in U$, $P\in \mathcal{P}^{\left( \bar{%
m}\right) },$ $\nu =1,\cdots ,\nu _{\max }$.

Let $\theta \in C^{\infty }\left( U\right) $. Then $L_{\nu }\left( \theta
f\right) \left( x_{0}\right) =L_{\nu }\left( \left[ J_{x_{0}}^{\left( \bar{m}%
\right) }\theta \right] f\right) \left( x_{0}\right) =0,$ for any $x_0 \in U$; i.e., $L_{\nu
}\left( \theta f\right) =0$ on $U$.

\begin{lemma}
\label{addendumlemma}Given $f_{1},\cdots ,f_{N}\in C^{\infty }\left(
U\right) $, there exists $\theta \in C^{\infty }\left( U\right) $ such that $%
\theta >0$ on $U$, $\mathbb{I}_{U}\theta f_{i}\in C^{\infty }\left( \mathbb{R%
}^{n}\right) $ for each $i$, and $J_{x}^{\left( \bar{m}\right) }\left( 
\mathbb{I}_{U}\theta f_{i}\right) =0$ for each $x\in \partial U$. Here $%
\mathbb{I}_{U}$ is the indicator function of $U$.
\end{lemma}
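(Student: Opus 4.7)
Write $d(x) = \operatorname{dist}(x, \partial U)$ for $x \in U$. Because $U$ is bounded, the sets $K_j = \{x \in U : d(x) \ge 1/j\}$ are compact in $U$, exhaust $U$, and the quantities
$$M_j \;:=\; \max_{1 \le i \le N,\ |\alpha| \le j}\ \sup_{K_j} |\partial^\alpha f_i|$$
are all finite. I will build a positive $\theta \in C^\infty(U)$ whose derivatives of every order decay at $\partial U$ faster than $1/M_j$ in each annulus $\{d \sim 1/j\}$; then the Leibniz rule will force every derivative of $\theta f_i$ to tend to $0$ as $d(x) \to 0$, so that the extension of $\theta f_i$ by $0$ outside $U$ is $C^\infty(\mathbb{R}^n)$ with flat infinite jet along $\partial U$ (in particular, a vanishing $\bar m$-jet, as required).

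\textbf{Construction of $\theta$.} I first invoke a classical Whitney regularized distance $\tilde d \in C^\infty(U)$ satisfying $\tfrac12 d \le \tilde d \le 2 d$ and $|\partial^\alpha \tilde d(x)| \le C_\alpha\, d(x)^{1-|\alpha|}$ on $U$. Next I fix a nonnegative $\psi \in C_0^\infty((1/2,2))$ whose dyadic dilates $\psi_k(t) := \psi(kt)$ satisfy $\sum_k \psi_k(t) > 0$ for every $t \in (0, \operatorname{diam}(U))$ and $\|\psi_k^{(j)}\|_\infty \le C_j k^j$. I then choose positive weights $\epsilon_k$ tending to $0$ so rapidly that
$$\epsilon_k \, \tilde C_j\, k^{cj}\, M_{4k} \;\le\; 2^{-k} \qquad \text{for every } j \le k,$$
where the universal constant $c$ and the constants $\tilde C_j$ come from the chain-rule estimate for $\psi_k \circ \tilde d$ (using $|\partial^\alpha \tilde d| \le C_\alpha d^{1-|\alpha|}$ and the fact that $\psi_k \circ \tilde d$ is supported in $\{1/(4k) \le d \le 4/k\}$). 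Finally I set
$$\theta(x) \;:=\; \sum_{k \ge 1} \epsilon_k\, \psi_k\!\bigl(\tilde d(x)\bigr).$$
The sum is locally finite on each $K_j$, so $\theta \in C^\infty(U)$; and at every $x \in U$ at least one $\psi_k(\tilde d(x))$ is positive, so $\theta > 0$ on $U$.

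\textbf{Verification.} Applying Faà di Bruno to $\psi_k \circ \tilde d$ and then Leibniz to the product with $f_i$, I estimate, for $x$ in the support of $\psi_k \circ \tilde d$ and $|\gamma| \le k$,
$$\bigl|\partial^\gamma\bigl(\epsilon_k \psi_k(\tilde d)\, f_i\bigr)(x)\bigr| \;\le\; \epsilon_k \, \tilde C_{|\gamma|}\, k^{c|\gamma|}\, M_{4k} \;\le\; 2^{-k},$$
since $x \in K_{4k}$ forces $|\partial^\beta f_i(x)| \le M_{4k}$ for $|\beta| \le 4k \ge |\gamma|$. Summing over $k$ and noting that the supports of $\psi_k \circ \tilde d$ shrink into $\partial U$, I conclude that for every multi-index $\gamma$ and every $i$, $\partial^\gamma(\theta f_i)(x) \to 0$ as $d(x) \to 0$. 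This is exactly the condition ensuring that the extension of $\theta f_i$ by $0$ outside $U$ lies in $C^\infty(\mathbb{R}^n)$ with vanishing infinite (hence $\bar m$-)jet on $\partial U$.

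\textbf{Main obstacle.} The delicate point is the simultaneous Borel-type coordination of the weights $\epsilon_k$ against the possibly explosive growth of the derivatives of $f_i$, encoded in the sequence $M_k$: the same $\theta$ must beat $M_j$ at every derivative order $j \le k$ for all $k$ at once. The dyadic localization via $\psi_k \circ \tilde d$ is what rescues the argument, because it confines each summand to an annulus $d \sim 1/k$ on which $|\partial^\beta f_i|$ is uniformly controlled by $M_{4k}$, allowing $\epsilon_k$ to be chosen layer by layer.
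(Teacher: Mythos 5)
Your construction follows the same approach as the paper's: both define $\theta$ as a rapidly convergent series of nonnegative bumps compactly supported in $U$, with coefficients decaying fast enough to kill all derivatives of the corresponding localizations of $f_i$ near $\partial U$. The paper uses a Whitney partition of unity $\{\varphi _\nu \}$ and constrains the weights directly by $\tau _\nu \,|\partial ^\alpha (\varphi _\nu f_i)|\leq 2^{-\nu }$ for $|\alpha |\leq \nu $; you use dyadic distance-annulus bumps $\psi _k\circ \tilde d$ built from a regularized distance and control the $f_i$ through the separate quantities $M_{4k}$ --- the same idea, with a Fa\`a di Bruno detour that the paper's direct constraint on $\tau_\nu |\partial^\alpha(\varphi_\nu f_i)|$ avoids. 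One minor slip to repair: with $\psi $ supported in $(1/2,2)$ and $\psi _k(t):=\psi (kt)$, the family $\{\psi _k\}_{k\geq 1}$ covers only $t\in (0,2)$, so $\sum _k\psi _k(\tilde d(x))$ may vanish at points of $U$ where $\tilde d(x)\geq 2$; add one extra smooth bump covering the compact range $[1,\sup _U\tilde d\,]$, or rescale $\psi $ so that the dilates cover $(0,\sup _U \tilde d\,]$. After that fix the argument is sound, and (like the paper) it leaves implicit the routine last step that a function on $U$ all of whose derivatives tend to $0$ at $\partial U$, extended by zero, is $C^\infty $ on $\mathbb{R}^n$ with vanishing jets on $\partial U$.
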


\begin{proof}
Fix cutoff functions $\varphi _{\nu }\left( x\right) $ $\left( \nu
=1,2,3,\cdots \right) $ with the following properties.

\begin{itemize}
\item Each $\varphi _{\nu }$ is a nonnegative $C_{0}^{\infty }$ function on $%
\mathbb{R}^{n}$.

\item Supp $\left( \varphi _{\nu }\right) \subset \subset U$ for each $\nu $.

\item For any $x\in U$ we have $\varphi _{\nu }\left( x\right) >0$ for some $%
\nu $.
\end{itemize}

For instance, we may take $\left\{ \varphi _{\nu }\right\} $ to be the
Whitney partition of unity, associated to the decomposition of $U$ into
Whitney cubes. (See \cite{stein-little-book}.)

We then fix a sequence of positive numbers $\tau _{\nu }$ $\left( \nu
=1,2,3,\cdots \right) $ with the following properties.

\begin{itemize}
\item $\tau _{\nu }\cdot \left\vert \partial ^{\alpha }\varphi _{\nu }\left(
x\right) \right\vert \leq 2^{-\nu }$ for $\left\vert \alpha \right\vert \leq
\nu ,$ $x\in \mathbb{R}^{n}$.

\item $\tau _{\nu }\cdot \left\vert \partial ^{\alpha }\left( \varphi _{\nu
}f_{i}\right) \left( x\right) \right\vert \leq 2^{-\nu }$ for $\left\vert
\alpha \right\vert \leq \nu ,$ $x\in \mathbb{R}^{n}$, $i=1,\cdots ,N$.
\end{itemize}

Such $\tau _{\nu }$ exist because $\varphi _{\nu }$, $f_{i}\in C^{\infty
}\left( U\right) $ and supp $\varphi _{\nu }\subset \subset U$.

One checks easily that 
\begin{equation*}
\theta =\sum_{\nu =1}^{\infty }\tau _{\nu }\varphi _{\nu }
\end{equation*}%
has all the properties asserted in the lemma.
\end{proof}

Picking $\theta $ as in Lemma \ref{addendumlemma}, we see that

\begin{itemize}
\item[\refstepcounter{equation}\text{(\theequation)}\label{+2}] {$L_{\nu
}\left( \mathbb{I}_{U}\theta f_{i}\right) =0$ on $U^{\text{closure}}$.}
\end{itemize}

Indeed (\ref{+2}) holds in $U$, and it holds on $\partial U$ because $%
J_{x}^{\left( \bar{m}\right) }\left( \mathbb{I}_{U}\theta f_{i}\right) =0$
for $x\in \partial U$, while $L_{\nu }$ has order $\leq \bar{m}$.

Because $\mathbb{I}_{U}\theta f$ belongs to $C^{\infty }\left( \mathbb{R}%
^{n},\mathbb{R}^{N}\right) $ and is annihilated by the $L_{\nu }$ on $U^{%
\text{closure}}$, there exists $\tilde{F}=\left( \tilde{F}_{1},\cdots ,%
\tilde{F}_{M}\right) \in C^{m}\left( \mathbb{R}^{n},\mathbb{R}^{M}\right) $
such that $\sum_{j=1}^{M}A_{ij}\tilde{F}_{j}=\mathbb{I}_{U}\theta f_{i}$ on $%
U^{\text{closure}}$ ($i=1,\cdots, N$). Setting $F_{j}=\tilde{F}_{j}/\theta $
on $U$ (recall, $\theta >0$ on $U,$ $\theta \in C^{\infty }\left( U\right) $%
), we have $F_{j}\in C_{loc}^{m}\left( U\right) $ and $\sum_{j=1}^M A_{ij}
F_j = f_i $ on $U$ ($i = 1, \cdots, N$).

So we have proven the following corollary of Theorem \ref%
{statement-main-theore}.

\begin{corollary}
\label{cor2}Let $U$ be a bounded open semialgebraic subset of $\mathbb{R}%
^{n} $, and let $\left( A_{ij}\left( x\right) \right) _{1\leq i\leq N,1\leq
j\leq M}$ be a matrix of semialgebraic functions on $U$. Then there exist
linear partial differential operators $L_{1},\cdots ,L_{\nu _{\max }}$ with
semialgebraic coefficients such that

\begin{itemize}
\item Each $L_{\nu }$ maps vectors $f$ of smooth functions to scalar-valued
functions.

\item Let $f=\left( f_{1},\cdots ,f_{N}\right) \in C^{\infty }\left( U,%
\mathbb{R}^{N}\right) $. Then the equations%
\begin{equation*}
\sum_{j=1}^{M}A_{ij}\left( x\right) F_{j}\left( x\right) =f_{i}\left(
x\right) \text{ on }U\left( i=1,\cdots ,N\right)
\end{equation*}%
admit a solution $F_{1},\cdots ,F_{N}\in C_{loc}^{m}\left( U\right) $ if and
only if $L_{\nu }f=0$ on $U$ for $\nu =1,\cdots ,\nu _{\max }$.
\end{itemize}
\end{corollary}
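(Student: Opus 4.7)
The plan is to reduce the local, noncompact problem on $U$ to the compact problem on $E := U^{\text{closure}}$ by a cut-off/multiplication argument, thereby invoking Theorem~\ref{statement-main-theore} as a black box. Applying that theorem to $E = U^{\text{closure}}$ (which is compact and semialgebraic because $U$ is bounded and semialgebraic) yields linear partial differential operators $L_1^{\text{comp}},\ldots,L^{\text{comp}}_{\nu_{\max}}$ with semialgebraic coefficients, of some maximum order $\bar{m}$, that detect when $\sum_j A_{ij}F_j=f_i$ admits a $C^m(\mathbb{R}^n,\mathbb{R}^M)$ solution on $E$. My list of operators for the corollary will be the finite family $f \mapsto L^{\text{comp}}_\nu(x^\gamma f)$ indexed by $\nu$ and by $|\gamma|\le\bar{m}$; each is a linear PDO with semialgebraic coefficients, since multiplying a semialgebraic coefficient by a monomial keeps it semialgebraic.

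For the forward direction, I would take any local $C^m$ solution $F \in C^m_{loc}(U,\mathbb{R}^M)$ and, for every $\theta \in C^\infty_0(U)$, extend $\theta F$ by zero to a global $C^m(\mathbb{R}^n,\mathbb{R}^M)$ function. This extension solves $\sum_j A_{ij}(\theta F_j) = \theta f_i$ on $E$, so Theorem~\ref{statement-main-theore} forces $L^{\text{comp}}_\nu(\theta f) = 0$ on $E$, hence at any fixed $x_0 \in U$. Since I can choose $\theta$ supported in $U$ to realize any prescribed $\bar{m}$-jet at $x_0$, and $L^{\text{comp}}_\nu(\theta f)(x_0)$ depends on $\theta$ only through $J^{(\bar{m})}_{x_0}\theta$, I obtain $L^{\text{comp}}_\nu(Pf)(x_0) = 0$ for every polynomial $P$ of degree $\le \bar{m}$. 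Taking $P = x^\gamma$ for all $|\gamma|\le\bar{m}$ gives the required necessary conditions on $U$.

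For the converse, I would fix $f \in C^\infty(U,\mathbb{R}^N)$ satisfying $L^{\text{comp}}_\nu(x^\gamma f) = 0$ on $U$ for all $\nu$ and all $|\gamma|\le \bar{m}$, which by linearity in the $\bar{m}$-jet of a multiplier is equivalent to $L^{\text{comp}}_\nu(\theta f) = 0$ on $U$ for \emph{every} $\theta \in C^\infty(U)$. The main obstacle is passing from vanishing on $U$ to vanishing on all of $E = U^{\text{closure}}$ while simultaneously producing a globally $C^\infty$ extension of $\theta f$ to $\mathbb{R}^n$. This is precisely where Lemma~\ref{addendumlemma} intervenes: it supplies a single $\theta \in C^\infty(U)$ that is strictly positive on $U$, satisfies $\mathbb{I}_U \theta f_i \in C^\infty(\mathbb{R}^n)$ for each $i$, and has all $\bar{m}$-jets vanishing at every point of $\partial U$. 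The construction uses a Whitney-type partition of unity on $U$ together with rapidly decaying weights that kill all derivatives up to order $\bar{m}$ near the boundary. With such a $\theta$, the identity $L^{\text{comp}}_\nu(\mathbb{I}_U \theta f) = 0$ holds on $U$ by hypothesis and trivially on $\partial U$ because each $L^{\text{comp}}_\nu$ has order $\le\bar{m}$ and annihilates a function with zero $\bar{m}$-jet; hence it holds on all of $E$.

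Finally, Theorem~\ref{statement-main-theore} applied to the globally smooth right-hand side $\mathbb{I}_U \theta f$ on $E$ produces $\tilde F = (\tilde F_1,\ldots,\tilde F_M) \in C^m(\mathbb{R}^n,\mathbb{R}^M)$ with $\sum_j A_{ij}\tilde F_j = \mathbb{I}_U \theta f_i$ on $E$. Dividing pointwise, $F_j := \tilde F_j/\theta$ lies in $C^m_{loc}(U)$ because $\theta \in C^\infty(U)$ is strictly positive on $U$, and $F = (F_1,\ldots,F_M)$ satisfies the original system on $U$. The single genuinely delicate step is the boundary-flat cutoff of Lemma~\ref{addendumlemma}, which is what makes the restriction/extension argument survive the loss of compact support; everything else is linearity in the $\bar{m}$-jet and the black-box use of Theorem~\ref{statement-main-theore}.
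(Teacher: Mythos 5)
Your proposal is correct and follows essentially the same route as the paper: apply Theorem \ref{statement-main-theore} to $E=U^{\text{closure}}$, take the operators $f\mapsto L_{\nu}(x^{\gamma}f)$ for $|\gamma|\leq\bar{m}$, prove necessity by cutting off with compactly supported $\theta$ realizing arbitrary $\bar{m}$-jets, and prove sufficiency via the boundary-flat positive multiplier of Lemma \ref{addendumlemma} followed by division by $\theta$. No gaps.
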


Finally, note that $\mathbb{R}^{n}$ is semialgebraically diffeomorphic to
the open unit cube $U=\left( -1,1\right) ^{n}$ via the map 
\begin{equation*}
U\ni \left( x_{1},\cdots ,x_{n}\right) \mapsto \left( \frac{x_{1}}{%
1-x_{1}^{2}},\cdots ,\frac{x_{n}}{1-x_{n}^{2}}\right) \in \mathbb{R}^{n}%
\text{.}
\end{equation*}

Theorem \ref{theorem1} now follows at once from Corollary \ref{cor2}.
(Recall that our notation has changed; the function space called $C^m(%
\mathbb{R}^n, \mathbb{R}^M)$ in the Introduction is now called $C^m_{loc}(%
\mathbb{R}^n, \mathbb{R}^M)$.) \end{proof}

As promised, all the semialgebraic sets and functions introduced in  
Sections \ref{sec3},...,\ref{passtononcompact} above can be computed using the results and techniques  
presented in Sections \ref{sec2.5} and \ref{sec2.6}. Therefore, in principle, we can  
compute the partial differential operators appearing in Theorem \ref{theorem1}.

\bibliographystyle{plain}
\bibliography{papers}

\end{document}